\newtheorem{theorem}{Theorem}[section]
 \newtheorem{lemma}[theorem]{Lemma}
 \newtheorem{prop}[theorem]{Proposition}
 \newtheorem{cor}[theorem]{Corollary}
 \theoremstyle{definition}
 \newtheorem{definition}[theorem]{Definition}
 \newtheorem{example}[theorem]{Example}
 \newtheorem{remark}[theorem]{Remark}
 \theoremstyle{remark}
\renewcommand{\phi}{\varphi}
\font\bmi=cmmi8 scaled 1440
\newcommand{\powerset}{\raise.6ex\hbox{\bmi\char'175 }}
\font\intix=cmex9
\newcommand{\TM}{{{\hbox{$\textfont3=\intix{\bigwedge}$}}}}
\newcommand{\TJ}{{{{\hbox{$\textfont3=\intix{\bigvee}$}}}}}
\newcommand{\nomi}{\mathbf{i}}
\newcommand{\nomj}{\mathbf{j}}
\newcommand{\nomk}{\mathbf{k}}
\newcommand{\noml}{\mathbf{l}}
\newcommand{\NOM}{\mathsf{NOM}}
\newcommand{\CNOM}{\mathsf{CNOM}}
\newcommand{\cnomm}{\mathbf{m}}
\newcommand{\cnomn}{\mathbf{n}}
\newcommand{\jty}{J^{\infty}}
\newcommand{\mty}{M^{\infty}}
\newcommand{\C}{\mathbf{C}}
\newcommand{\A}{\mathbf{A}}
\newcommand{\B}{\mathbf{B}}
\newcommand{\Lat}{\mathbf{L}}
\newcommand{\M}{\mathbf{M}}
\newcommand{\Db}{\Diamondblack} %diamond black
\newcommand{\Bb}{\blacksquare} %box black
\newcommand{\Clos}{\mathbb{K}}
\newcommand{\Open}{\mathbb{O}}
\newcommand{\ox}{\overline{x}}
\newcommand{\oy}{\overline{y}}
\newcommand{\oz}{\overline{z}}
\newcommand{\oX}{\overline{X}}
\newcommand{\op}{\overline{p}}
\newcommand{\ophi}{\overline{\phi}}
\newcommand{\opsi}{\overline{\psi}}
\newcommand{\oga}{\overline{\gamma}}
\newcommand{\bigamp}{\mathop{\mbox{\Large \&}}}
\newcommand{\amp}{\mathop{\&}}
\newcommand{\iamp}{\bindnasrepma}
\newcommand{\bigiamp}{\mbox{\Large $\iamp$}}
\newcommand{\phipqim}{\varphi(p, \overline{q}, \overline{\nomi}, \overline{\cnomm})}
\newcommand{\psipqim}{\psi(p, \overline{q}, \overline{\nomi}, \overline{\cnomm})}
\newcommand{\Cc}{\mathbf{C}}
\newcommand{\ifBox}[1]{IF$^{\Box}_{#1}$}
\newcommand{\ifDia}[1]{IF$^{\Diamond}_{#1}$}
\newcommand{\LFPone}{\mu}
\newcommand{\LFPtwo}{\mu_{2}}
\newcommand{\LFPstar}{\mu^{*}}
\newcommand{\GFPtwo}{\nu_{2}}
\newcommand{\Tm}{\mathcal{L}}
\newcommand{\Tmone}{\mathcal{L}_1}
\newcommand{\Tmtwo}{\mathcal{L}_2}
\newif\ifmargincoms
\begin{document}

%\title{Algorithmic-algebraic canonicity for mu-calculi}
\title{Canonicity results for mu-calculi: an algorithmic approach}
\author{Willem Conradie 	\\ \\  \texttt{wconradie@uj.ac.za} \\University of Johannesburg\\South Africa  \and Andrew Craig \\ \\ \texttt{acraig@uj.ac.za}
\\University of Johannesburg\\South Africa }
\maketitle

\begin{abstract}
We investigate the canonicity of inequalities of the intuitionistic mu-calculus. The notion of canonicity in the presence of fixed point operators is not entirely straightforward. In the algebraic setting of canonical extensions we examine both the usual notion of canonicity and what we will call tame canonicity. This latter concept has previously been investigated for the classical mu-calculus by Bezhanishvili and Hodkinson. Our approach is in the spirit of Sahlqvist theory. That is, we identify syntactically-defined classes of inequalities, namely the restricted inductive and tame inductive inequalities, which are, respectively, canonical or tame canonical. Our approach is to use an algorithm which processes inequalities with the aim of eliminating propositional variables. The algorithm we introduce is closely related to the algorithms ALBA and mu-ALBA studied by Conradie, Palmigiano, et al. It is based on a calculus of rewrite rules, the soundness of which rests upon the way in which algebras embed into their canonical extensions and the order-theoretic properties of the latter. We show that the algorithm succeeds on every restricted inductive inequality by means of a so-called proper run, and that this is sufficient to guarantee their canonicity. Likewise, we are able to show that the algorithm succeeds on every tame inductive inequality by means of a so-called tame run. In turn, this guarantees their tame canonicity.
\end{abstract}
%Our main results show that every member of each class of inequalities can (a) be successfully processed by the algorithm and hence (b) is, respectively, canonical or tame canonical.
%The join-density of the completely join-irreducibles and meet-density of the completely meet-irreducibles in the perfect lattice which is the canonical extension allow us to eliminate propositional variables in inequalities and systematically replace them with special variables called nominals and co-nominals.

\noindent \textit{Keywords:} modal mu-calculus, Sahlqvist theory, canonical extension, ALBA algorithm, canonicity.

\section{Introduction}\label{sec:Intro}

The modal mu-calculus was defined in 1983 by Kozen~\cite{Koz83} and is obtained by adding the least and greatest fixed point operators to the basic modal logic. An overview of the modal mu-calculus can be found in the chapter by Bradfield and Stirling~\cite{BradStir06}.
Canonical models and completeness results for the finitary and infinitary logics defined by Kozen were obtained by Ambler et al.~\cite{Ambler:TCS}.
In our algebraic canonicity proofs we will build on the definitions of modal mu-algebras from~\cite{Ambler:TCS}.
The correspondence and completeness of logics with fixed point operators has been the subject of recent studies by
Bezhanishvili and Hodkinson~\cite{Bezhanishvili:Hodkinson:TCS} and Conradie et al.~\cite{muALBA}. Both of these works aim to
develop a Sahlqvist-like theory for their respective fixed point settings.

Sahlqvist theory, first developed in 1975~\cite{Sahl75}, is one of the most important and powerful ideas in the study of modal and related logics.
The theory consists of two parts: \emph{canonicity} and \emph{correspondence}. The Sahlqvist formulas are a recursively defined class of modal formulas
with a particular syntactic shape. Any modal logic axiomatized by Sahlqvist formulas is strongly complete (via canonicity) with respect to its class of Kripke frames, and the latter is moreover guaranteed to be an elementary class. This last fact, that the class of frames can
be characterized by first-order conditions, is the correspondence aspect of Sahlqvist theory.
This Sahlqvist-style approach of describing a class of formulas of a certain syntactic shape for which correspondence and completeness results can be proved, has been extensively developed by van Benthem~\cite{VB83,VB2001} and others. The so-called Sahlqvist--van Benthem algorithm is used
to find the first-order condition that corresponds to a given Sahlqvist formula. Various generalizations of the Sahlqvist class exist, including the inductive formulas, introduced by Goranko and Vakarelov~\cite{GV-ind}.

The work in~\cite{Bezhanishvili:Hodkinson:TCS} looks at both correspondence and a certain type of canonicity for the classical mu-calculus. They define
\emph{Sahlqvist fixed point formulas}: a syntactic class which allows for limited use of fixed point operators.
A modified version of canonicity is proved for these Sahlqvist fixed point formulas. (We also note the related algebraic
work on preservation of Sahlqvist fixed point equations under MacNeille completions by Bezhanishvili and Hodkinson~\cite{BH-AU-relativized}.)

In contrast to the work on Sahlqvist fixed point formulas in \cite{Bezhanishvili:Hodkinson:TCS}, \cite{muALBA} examines correspondence only,
and, using an algorithmic approach, obtains results for a broader class of formulas in the setting of bi-intuitionistic mu-calculus. The correspondence
results are achieved by examining validity on the complex algebras dual to Kripke frames. The algorithmic approach of~\cite{muALBA} builds on
work by Conradie and Palmigiano~\cite{DistMLALBA} on canonicity and correspondence for distributive modal logic.

In this paper we will make use of purely algebraic and order-theoretic techniques. This approach has proved fruitful in obtaining completeness results for many non-classical logics. 

We prove two different canonicity results for two classes of intuitionistic mu-formulas.
We show that the members of a certain class of intuitionistic mu-formulas are
canonical, in the sense of~\cite{Bezhanishvili:Hodkinson:TCS};
that is, they are preserved under certain modified canonical extensions. We refer to this
modified form of canonicity (described in detail in Section~\ref{sec:canon-summary}) as \emph{tame canonicity}.
We also define a second class of formulas for which the usual notion of canonicity holds.
However, in this second case we do not get completeness of the logics defined by the
canonical mu-formulas. This lack of completeness is explained in more detail in Section~\ref{sec:canon-summary}.

Our methods use a variation of the algorithm ALBA (Ackermann Lemma Based Algorithm) developed in~\cite{DistMLALBA}.
The key step in our algorithm $\mu^*$-ALBA is the elimination of propositional variables via an Ackermann-style rule.
We define \emph{tame} and \emph{proper} runs of our algorithm and show that all mu-inequalities that can be successfully
processed by these runs are, respectively, tame canonical or canonical.

In Section~\ref{sec:language} we define the languages in which we operate and establish the algebraic setting for the
interpretation of these languages.
Section~\ref{sec:canon-summary} summarizes our canonicity results and outlines the method for achieving these results.
The syntactic classes of inequalities for which we obtain
our canonicity results are defined in Section~\ref{sec:synclass}; the examples in this section
will assist the reader in getting to grips with the rather technical syntactic definitions.
The algorithm $\mu^*$-ALBA is presented in Section~\ref{sec:algomustar}. The soundness of most of the rules of $\mu^*$-ALBA
follows easily from properties of the interpreting algebras.
However, the proofs of the fixed point approximation rules and the Ackermann rules require more intricate algebraic manipulations.
These proofs are given in Section~\ref{sec:sound-fp-approx} and Section~\ref{sec:soundAckermann} respectively.
Some technical lemmas required for the proofs in Section~\ref{sec:soundAckermann} are given in Appendix~\ref{app:alg-lemmas}.
Section~\ref{sec:innerformulas} describes a syntactically defined class of formulas, the term functions of which satisfy the order-theoretic conditions required by the fixed point approximation rules.
%\marginpar{\raggedright \tiny AC: Not sure how to fit these in to the story.}

Section~\ref{sec:canonicity} demonstrates the tame canonicity of mu-inequalities on which a tame run of our algorithm succeeds and
also shows the canonicity of mu-inequalities on which a proper run of $\mu^*$-ALBA succeeds.
In Section~\ref{sec:canon-of-synclasses} we prove that the members of the two different syntactic classes defined in Section~\ref{sec:synclass} are, respectively, tame canonical
and canonical.
To end the paper, in Section~\ref{sec:examples} we present two examples of the algorithm at work.

\section{Language and interpretation}\label{sec:language}

In this section we collect the essential details of the syntax and semantics we will be using. We have opted to work in an intuitionistic rather than classical setting for two reasons: firstly, it allows us to carefully disentangle the order theoretic properties of connectives which make our approach tick, in a way that would seem unnecessary and pedantic if, e.g., classical negation was available; secondly, this added generality comes at very little extra cost. Since the focus of this paper is canonicity, we will work almost exclusively with algebraic semantics, which in this case takes the form of bi-Heyting algebras with additional modal operators.  The relational semantics can be given, as usual, by intuitionistic Kripke frames with additional relations for interpreting the modalities, see e.g., \cite{WolterZakhIntML} and \cite{FischerServi}.

\paragraph{Modal bi-Heyting algebras.} A \emph{bi-Heyting algebra} is an algebra $\A=(A, \wedge, \vee, \rightarrow, -, \top, \bot)$ such that both
the reducts
$(A, \wedge, \vee, \rightarrow, \top, \bot)$ and $(A, \wedge, \vee, -, \top, \bot)^\partial$ are Heyting algebras. A \emph{modal bi-Heyting algebra} is an algebra $$\A=(A, \wedge, \vee, \rightarrow, -, \top, \bot, \Box, \Diamond)$$
such that $(A, \wedge, \vee, \rightarrow, -, \top, \bot)$ is a bi-Heyting algebra and $\Box$ and $\Diamond$ preserve finite meets and joins, respectively.
We observe that $\rightarrow$ and $-$ satisfy the inequalities
$$u \wedge v \le w \quad \text{iff} \quad u \le v \rightarrow w \qquad \text{and} \qquad u - v \le w \quad \text{iff} \quad u \le v \vee w.$$

The completely join-irreducible elements and completely meet-irreducible elements of a complete lattice will play a
very important role in our algorithmic approach.
\begin{definition}\label{def:CJI} Let $\mathbf{C}$ be a complete lattice. Then
\begin{enumerate}
\item[(i)] $j \in C$ is \emph{completely join-irreducible} if for any $X \subseteq C$, if $j=\bigvee X$ then $j=x$ for some $x \in X$;
\item[(ii)] $m \in C$ is \emph{completely meet-irreducible} if for any $Y \subseteq C$, if $m = \bigwedge Y$ then $m=y$ for some $y \in Y$;
\item[(iii)] $j \in C$ is \emph{completely join-prime} if for any $X \subseteq C$, if $j \le \bigvee X$ then $j \le x$ for some $x \in X$;
\item[(iv)] $m \in C$ is \emph{completely meet-prime} if for any $Y \subseteq C$, if $m \geq \bigwedge Y$, then $m \geq y$ for some
$y \in Y$.
\end{enumerate}
\end{definition}

A \emph{perfect lattice} is a complete lattice in which the completely join-irreducible elements are join-dense (i.e., every element is a join of join-irreducibles), and the completely meet-irreducible elements are meet-dense (i.e., every element is a meet of meet-irreducibles). A \emph{perfect distributive lattice} is a perfect lattice that is also completely distributive, i.e., arbitrary meets distribute over arbitrary joins and vice versa. In this case the completely join-irreducible (completely meet-irreducible) elements
coincide with the completely join-prime (completely meet-prime) elements. (In general, a completely join-prime (completely meet-prime) element of a complete lattice is completely join-irreducible (completely meet-irreducible) but not vice versa.)

A bi-Heyting algebra is \emph{perfect} if its lattice reduct is a perfect distributive lattice.  If follows that in a perfect bi-Heyting algebra $\A$, for any $S \subseteq A$, we have that $\bigvee S \rightarrow a = \bigwedge_{s \in S} (s \rightarrow a)$, $a \rightarrow \bigwedge S = \bigwedge_{s \in S} (a \rightarrow s)$, $\bigvee S - a = \bigvee_{s \in S}(s - a)$ and $a - \bigwedge S = \bigvee_{s \in S}(a - s)$. A \emph{perfect modal bi-Heyting algebra} is a modal bi-Heyting algebra the bi-Heyting reduct of which is a perfect bi-Heyting algebra, and moreover such that $\Box$ and $\Diamond$ preserve arbitrary meets and joins, respectively. The latter property allows us to add to any perfect modal bi-Heyting algebra the adjoint operations $\Db$ and $\Bb$ uniquely defined by the inequalities:
\[
\Diamond a \le b  \quad\Longleftrightarrow\quad a \le \Bb b \quad\qquad \text{and} \qquad\quad  a \le \Box b \quad\Longleftrightarrow\quad \Db a \le b.
\]

As usual we say that maps $f : \A \rightarrow \B$ and $g : \B \rightarrow \A$ form an \emph{adjoint pair} if, for all $a \in A$ and $b \in B$, it holds that $f(a) \leq b$ iff $a \leq g(b)$. Here $f$ is the \emph{left adjoint} and $g$ the \emph{right adjoint}. It is well known that a map between complete lattices is a left (right) adjoint iff it is completely join-preserving (completely meet-preserving).

A map $f: \A^n \rightarrow \A$ is the  \emph{left residual in the $i$-th coordinate} of a map $g_i: \A^n \rightarrow \A$ if, for all $a_1, \ldots, a_n, b \in A$, it holds that $f(a_1, \ldots, a_n) \leq b$ iff $g_i(a_1, \ldots, a_{i-1}, b, a_{i+1}, \ldots a_n) \leq a_i$. Here $g_i$ is the \emph{right residual of $f$ in the $i$-th coordinate}. It is easy to check that, if $\A$ is a complete lattice, then  $f$ has a right-residual (left-residual) in the $i$-th coordinate iff it is completely join-preserving (completely meet-preserving) in that coordinate. Recall that if $f: \A^n \rightarrow \A$ is completely join-preserving (completely meet-preserving) then it preserves all non-empty joins (meets) in each coordinate, but need not preserve empty joins (meets) in each coordinate. Thus $f: \A^n \rightarrow \A$ being an adjoint does not guarantee that it is a residual coordinatewise, nor vice versa.

\paragraph{The canonical extension.} Let $\A$ be a bounded distributive lattice with additional operations, in particular, $\A$ could be a modal bi-Heyting algebra.  The canonical extension of $\A$, defined by Gehrke and Harding~\cite{GH2001} and denoted $\A^\delta$, is a perfect bounded distributive lattice which, up to an isomorphism fixing $\A$,  is the unique extension in which $\A$ is dense and compact:

\begin{description}
\item[density]  every element of $\A^\delta$ can be
%ac1408
written
%expressed
both as a join of meets and as a meet of joins of elements from $\A$.
\item[compactness] for all $S, T \subseteq A$, if $\bigvee S \leq \bigwedge T$ then $\bigvee S'\leq \bigwedge T'$ for some finite $S'\subseteq S$ and $T'\subseteq T$.
\end{description}

Additional operations on $\A$ can be extended to $\A^\delta$ in a standard way. For more details the reader is referred to the appendix.

\paragraph{Languages and their interpretations.} Let $\mathsf{PROP}$, $\mathsf{FVAR}$, $\mathsf{PHVAR}$, $\mathsf{NOM}$ and $\mathsf{CNOM}$ be disjoint sets of propositional variables, fixed point variables, placeholder variables, nominals and co-nominals, respectively. Formulas in the basic language $\mathcal{L}$ of modal bi-Heyting algebras are defined recursively by
\begin{center}
$\phi ::= \bot \mid \top \mid p \mid X  \mid \phi \wedge \psi \mid \phi \vee \psi \mid \phi \rightarrow \psi \mid \phi - \psi
\mid \Diamond \phi \mid \Box \phi$
\end{center}
where
$p \in \mathsf{PROP}$ and $X \in \mathsf{FVAR}$. We identify the language with its set of formulas/terms.
Formulas in the extended language $\mathcal{L}^+$ are defined by
\begin{center}
$\phi ::= \bot \mid \top \mid p \mid X \mid \mathbf{j}  \mid \mathbf{m} \mid \phi \wedge \psi \mid \phi \vee \psi \mid \phi \rightarrow \psi \mid \phi - \psi
\mid \Diamond \phi \mid \Box \phi \mid \Bb \phi  \mid \Db \phi$
\end{center}
where
$p \in \mathsf{PROP}$, $X \in \mathsf{FVAR}$, $\nomj \in \mathsf{NOM}$ and $\cnomm \in \CNOM$. Placeholder variables from $\mathsf{PHVAR}$, denoted $x, y, z$, will be used as generic variables which can take on the roles of propositional and fixed point variables. They will also be used to enhance the clarity of the exposition when dealing with substitution instances of formulas.

On perfect modal bi-Heyting algebras $\Bb$ and $\Db$ are interpreted as the right and left adjoints of $\Diamond$ and $\Box$, respectively. Elements of $\mathsf{NOM}$ ($\CNOM$) are interpreted
as elements of $\jty(\A^\delta)$ ($\mty(\A^\delta)$).
%ac1403
%We will use the terms `formula' and `term' interchangeably, and also denote the set of all terms in $\mathcal{L}$ by $\Tm$ and the terms of $\mathcal{L}^+$ by $\Tm^+$.
We now describe two extensions of $\Tm$ obtained by adding fixed point operators.
The distinction between the two extensions will become clear when we define their interpretations
on distributive lattices with operators.

We define $\Tmone$ to be the set of terms which extends $\Tm$ by allowing terms
$\mu x. t(x)$ and $\nu x.t(x)$ where $t \in \Tmone$, $x \in \mathsf{FVAR}$ and $t(x)$ is positive in $x$.
The second extension is denoted $\Tmtwo$
%ac2310 again should we write $(\Tm^+)_2$?
and extends $\Tm$ by allowing construction of the terms $\mu_2 x. t(x)$ and $\nu_2 x.t(x)$
where $t \in \Tmtwo$, $x \in \mathsf{FVAR}$  and $t(x)$ is positive in $x$.

The terms of $\Tm$ are interpreted as usual on modal bi-Heyting algebras. The additional terms of
$\Tmone$ and $\Tmtwo$ are interpreted as follows: Suppose $t(x_1,x_2, \ldots,x_n) \in \Tmone$ and $a_1,\ldots,a_{n-1} \in A$. Then
$$\mu x. t(x,a_1,\ldots,a_{n-1}) := \bigwedge \{\,a \in A \mid t(a,a_1,\ldots,a_{n-1}) \le a \,\}$$
if this meet exists, otherwise $\mu x. t(x,a_1,\ldots,a_{n-1})$ is undefined. Similarly,
$$\nu x.t(x,a_1,\ldots,a_{n-1}) := \bigvee \{\, a \in A \mid a \le t(a,a_1,\ldots,a_{n-1}) \,\}$$
if this join exists, otherwise $\nu x.t(x,a_1,\ldots,a_{n-1})$ is undefined. For each ordinal $\alpha$ we define $t^\alpha(\bot,a_2,\ldots,a_n)$ as follows:
\begin{align*}
t^0(\bot,a_1,\ldots,a_{n-1}) &= \bot, \qquad t^{\alpha+1}(\bot,a_1,\ldots,a_{n-1}) = t\big( t^{\alpha}(\bot,a_1,\ldots,a_{n-1}),a_1,\ldots,a_{n-1}\big),\\
t^{\lambda}(\bot,a_1,\ldots,a_{n-1}) &= \bigvee_{\alpha < \lambda} t^{\alpha}(\bot, a_1, \ldots,a_{n-1}) \quad \text{for limit ordinals } \lambda;\\
t_0(\top,a_1,\ldots,a_{n-1}) &= \top, \qquad t_{\alpha+1}(\top,a_1,\ldots,a_{n-1}) = t\big( t_{\alpha}(\top,a_1,\ldots,a_{n-1}),a_1,\ldots,a_{n-1}\big),\\
t_{\lambda}(\top,a_1,\ldots,a_{n-1})&= \bigwedge_{\alpha < \lambda} t_{\alpha}(\top, a_1, \ldots,a_{n-1}) \quad \text{for limit ordinals } \lambda.
\end{align*}

For $t(x_1,\ldots,x_n) \in \Tmtwo$ we then define
$$\LFPtwo x.t(x,a_1,\ldots,a_{n-1}) := \bigvee_{\alpha \geq 0} t^\alpha(\bot,a_1,\ldots,a_{n-1})
\quad \text{and}\quad
\GFPtwo x.t(x,a_1,\ldots,a_{n-1}):= \bigwedge_{\alpha \geq 0} t_\alpha(\top,a_1,\ldots,a_{n-1})$$
if this join and this meet exist, and they are undefined otherwise.

A modal bi-Heyting algebra $\A$ is said to be \emph{of the first kind} (\emph{of the second kind}) if
$t^\A(a_1,\ldots,a_n)$ is defined for all $a_1,\ldots,a_n \in \A$ and all $t \in \Tmone$ ($t \in \Tmtwo$).
Henceforth we will refer to these algebras as \emph{mu-algebras of the first kind} (\emph{of the second kind}).
When restricted to the Boolean case, our mu-algebras of the first kind are essentially the modal mu-algebras
defined in~\cite[Definition 2.2]{Bezhanishvili:Hodkinson:TCS} and~\cite[Definition 5.1]{Ambler:TCS}.

\begin{lemma}\label{lem:type2=>type1} {\upshape \cite[Proposition 2.4]{Ambler:TCS}} If\, $\A$ is a mu-algebra of the second kind, then $\A$ is a mu-algebra of the first kind.
\end{lemma}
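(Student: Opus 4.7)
The plan is to proceed by structural induction on terms $t \in \Tmone$, showing that, under the hypothesis that $\A$ is a mu-algebra of the second kind, $t^\A(\bar a)$ is always defined and coincides with $\hat{t}^\A(\bar a)$, where $\hat{t} \in \Tmtwo$ is obtained from $t$ by replacing each subterm $\mu x. s$ with $\mu_2 x. \hat{s}$ and each $\nu x. s$ with $\nu_2 x. \hat{s}$. Since positivity in $x$ is a syntactic condition that is preserved under this translation, $\hat{t}$ is a well-formed term of $\Tmtwo$, and $\hat{t}^\A(\bar a)$ is defined by assumption.

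The base cases (constants, variables) and the inductive cases for $\wedge, \vee, \rightarrow, -, \Box, \Diamond$ are trivial, since $t$ and $\hat{t}$ use the same interpretation of these symbols. The only substantive cases are $t = \mu x. s(x, \bar y)$ and $t = \nu x. s(x, \bar y)$, and by duality it suffices to treat the first. By the inductive hypothesis, the term function $f \colon b \mapsto s^\A(b, \bar a) = \hat{s}^\A(b, \bar a)$ is well-defined and, being positive in $x$, monotone in $b$. Since $\A$ is of the second kind, $\LFPtwo x.\hat{s}(x, \bar a) = \bigvee_{\alpha \ge 0} f^\alpha(\bot)$ exists; in particular, $f^\alpha(\bot)$ is defined for every ordinal $\alpha$, which means that all the limit-stage joins exist in $A$. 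A routine transfinite induction using the monotonicity of $f$ shows that the chain $\{f^\alpha(\bot)\}_\alpha$ is weakly increasing, and because $A$ is a set this chain must stabilise at some ordinal $\alpha_0$, so that $u := f^{\alpha_0}(\bot) = f^{\alpha_0+1}(\bot)$ is a fixed point of $f$ and equals $\bigvee_{\alpha \ge 0} f^\alpha(\bot) = \hat{t}^\A(\bar a)$.

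It remains to show that $\mu x. s(x, \bar a) = \bigwedge \{\, b \in A \mid f(b) \le b \,\}$ exists in $A$ and equals $u$. Since $f(u) = u$, we have $u \in \{\, b \mid f(b) \le b \,\}$, so it suffices to show that $u$ is a lower bound of this set. For any such $b$, a transfinite induction establishes $f^\alpha(\bot) \le b$ for all $\alpha$: the base case is immediate; at successor stages, $f^{\alpha+1}(\bot) = f(f^\alpha(\bot)) \le f(b) \le b$ by monotonicity and the prefixed-point assumption on $b$; at limit stages, the join of elements bounded by $b$ is still bounded by $b$. Consequently $u \le b$, so $u$ is the least element of $\{\, b \mid f(b) \le b \,\}$, the meet defining $\mu x. s(x, \bar a)$ exists, and it equals $u = \hat{t}^\A(\bar a)$. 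The main conceptual point on which the argument rests is the set-theoretic stabilisation of the iteration chain, which is available precisely because the second-kind hypothesis guarantees that every $f^\alpha(\bot)$ actually lives in $A$.
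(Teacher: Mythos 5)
Your proof is correct and takes essentially the same route as the paper's: both arguments rest on the stabilisation of the monotone approximation chain $f^\alpha(\bot)$ at some ordinal, the observation that the stabilised value is a (pre-)fixed point, and a transfinite induction showing it lies below every pre-fixed point, so that the defining meet of $\mu x.s$ exists and equals $\mu_2 x.\hat{s}$. Your outer structural induction on terms (handling nested binders via the translation $t \mapsto \hat{t}$) makes explicit a step the paper leaves implicit, but the substance is identical.
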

\begin{proof}
Suppose $t(x, x_1, \ldots, x_{n-1}) \in \Tmtwo$ is positive in $x$ and let $a_1, \ldots, a_{n-1} \in A$.  Further, let $\gamma$ be
the first ordinal such that $t^\beta(\bot, a_1, \ldots, a_{n-1}) = t^\gamma(\bot, a_1, \ldots, a_{n-1})$ for all $\beta > \gamma$ --- such a $\gamma$ exists since $t$ is monotone in $x$. We will show that the meet
$\LFPone x.t(x, a_1, \ldots, a_{n-1})=\bigwedge \{\, a \in A \mid t(a,a_1,\ldots,a_{n-1}) \le a \,\}$ exists by showing that
$\LFPtwo x.t(x, a_1, \ldots, a_{n-1})=\LFPone x.t(x, a_1, \ldots, a_{n-1})$.

Suppose that $a \in A$ is a pre-fixed point of $t$, that is, $t(a,a_1,\ldots,a_{n-1}) \le a$. We will prove by transfinite induction that $t^\alpha(\bot, a_1, \ldots, a_{n-1}) \le a$ for all ordinals $\alpha$.

\begin{itemize}
\item Base case: clearly $t^0(\bot, a_1, \ldots, a_{n-1})=\bot \le a$.
\item Suppose $t^\alpha(\bot, a_1, \ldots, a_{n-1}) \le a$. Then\\
$t^{\alpha+1}(\bot, a_1, \ldots, a_{n-1}) = t (t^\alpha(\bot, a_1, \ldots, a_{n-1}), a_1, \ldots, a_{n-1}) \le t(a,a_1,\ldots,a_{n-1}) \le a$ since $t$ is positive in $x$.
\item Let $\lambda$ be a limit ordinal with $t^\alpha(\bot,a_1, \ldots, a_{n-1}) \le a$ for all $\alpha < \lambda$. Then\\
$t^\lambda(\bot, a_1, \ldots, a_{n-1}) = \bigvee \{\, t^\alpha(\bot, a_1, \ldots, a_{n-1}) \mid \alpha < \lambda \,\} \le a$ since $a$ is an upper bound for the set\\
$\{\,t^\alpha(\bot, a_1, \ldots, a_{n-1}) \mid \alpha < \lambda\,\}$.
\end{itemize}
Thus we have that $t^\gamma(\bot, a_1, \ldots, a_{n-1}) \le \bigwedge \{\,a \in A \mid t(a,a_1,\ldots,a_{n-1}) \le a \,\}$. Note also that $t^\gamma(\bot, a_1, \ldots, a_{n-1})$ is a pre-fixed point as
$t(t^\gamma(\bot, a_1, \ldots, a_{n-1}), a_1, \ldots, a_{n-1}) = t^{\gamma+1}(\bot, a_1, \ldots, a_{n-1})=t^\gamma(\bot, a_1, \ldots, a_{n-1})$. Thus $t^\gamma(\bot, a_1, \ldots, a_{n-1}) \in \{\,a \in A \mid t(a,a_1,\ldots,a_{n-1}) \le a \,\}$ and so we have the desired equality.
\end{proof}

The importance of Lemma~\ref{lem:type2=>type1} is that if we are interpreting formulas/terms on a mu-algebra of the second kind the interpretations of terms with the two different fixed point binders will agree. That is, $\mu X.\phi(X) =\mu_2 X.\phi(X)$ and $\nu X.\psi(X) = \nu_2 X.\psi(X)$.

The final sets of terms,  $\Tm_*$ (respectively,  $\Tm^+_*$), are obtained as an extension of $\mathcal{L}$ (respectively, $\mathcal{L}^+$) by allowing $\mu^* x. t(x)$ and $\nu^* x.t(x)$ whenever $t \in \Tm_*$ (respectively, $t \in \Tm^+_*$) and positive in $x$. Terms in $\Tm_*$ and $\Tm^+_*$ are only interpreted in the canonical extensions $\A^\delta$ of modal bi-Heyting algebras $\A$. If $t(x_1,x_2, \ldots,x_n) \in \Tm_* \cup \Tm^+_*$ and $a_1,\ldots,a_{n-1} \in A^\delta$, then $\mu^* x_1. t(x_1,a_1,\ldots,a_{n-1}) := \bigwedge \{\,a \in A \mid t(a,a_1,\ldots,a_{n-1}) \le a \,\}$ and $\nu^* x_1.t(x_1,a_2,\ldots,a_n) := \bigvee \{\, a \in A \mid a \le t(a,a_1,\ldots,a_{n-1}) \,\}$.
As the canonical extension $\A^\delta$ is a complete lattice, the interpretation of $\mu^* x. t(x)$ or $\nu^* x.t(x)$ is always defined.
Given a term $\phi \in \Tm^+_1$ we write $\phi^*$ for the $\Tm^+_*$ term obtained from $\phi$ by replacing all occurrences of $\mu$ and $\nu$ with $\mu^*$ and $\nu^*$, respectively.

An $\mathcal{L}^+$ formula is \emph{pure} if it contains no ordinary (propositional) variables but only, possibly, nominals and co-nominals. A formula of $\mathcal{L}_1$
(respectively $\Tm_1^+, \Tm_2,\Tm_2^+,\Tm_*,\Tm_*^+$) is an \emph{$\mathcal{L}_1$-sentence} if it contains no free fixed point variables (and similarly for
$\Tm_1^+, \Tm_2,\Tm_2^+,\Tm_*,\Tm_*^+$).

The reason for using $\mu^*$ and $\nu^*$ in formulas/terms is so that the interpretation does not change when moving
between $\A$ and $\A^\delta$.

\paragraph{Quasi-inequalities, assignments, validity.} An assignment on $\A$ sends propositional variables to elements of $A$  and is extended to formulas of $\mathcal{L}_1$, $\mathcal{L}_2$ and $\mathcal{L}_*$ in the usual way, where these are defined. An assignment on $\A^\delta$ sends propositional variables to elements of $\A^\delta$, nominals into $\jty(\A^\delta)$ and co-nominals into $\mty(\A^\delta)$ and extends to all formulas of $\mathcal{L}^+_*$, $\mathcal{L}^+_1$ and $\mathcal{L}^+_2$. An \emph{admissible assignment} on $\A^\delta$ is an assignment which takes all propositional variables to elements of $\A$. An $\mathcal{L}^+$-inequality $\alpha \leq \beta$ is \emph{admissibly valid} on $\A^\delta$, denoted $\A^\delta \models_\A \alpha \leq \beta$, if it holds under all admissible assignments.

A \emph{quasi-inequality} of $\mathcal{L}_1$ (resp., $\mathcal{L}^+_1$, $\mathcal{L}_2$, $\mathcal{L}^+_2$, $\mathcal{L}_*$, $\mathcal{L}^+_*$) is an expression of the form $\phi_1 \leq \psi_1 \amp \cdots \amp \phi_n \leq \psi_n \Rightarrow \phi \leq \psi$ where the $\phi_i$, $\psi_i$, $\phi$ and $\psi$ are formulas of  $\mathcal{L}_1$ (resp., $\mathcal{L}^+_1$, $\mathcal{L}_2$, $\mathcal{L}^+_2$, $\mathcal{L}_*$, $\mathcal{L}^+_*$). A quasi-inequality $\phi_1 \leq \psi_1 \amp \cdots \amp \phi_1 \leq \psi_1 \Rightarrow \phi \leq \psi$ is satisfied under an assignment $V$ in an algebra $\A$ of the appropriate sort, written $\A, V \models \phi_1 \leq \psi_1 \amp \cdots \amp \phi_n \leq \psi_n \Rightarrow \phi \leq \psi$ if $\A, V \not \models \phi_i \leq \psi_i$ for some $1 \leq i \leq n$ or $\A, v \models \phi \leq \psi$. A quasi-inequality is (admissibly) valid in an algebra if it is satisfied by every (admissible) assignment.

\paragraph{Signed generation trees}
%\texttt{Include all the languages.}
To any formula/term in $\mathcal{L}_1^+$ and $\mathcal{L}_*^+$ we assign two \emph{signed generation trees}. That is,
for $\varphi \in \mathcal{L}$ we consider two trees $+\phi$ and $-\phi$. The generation tree is
constructed as usual, beginning at the root with the main connective and then
branching out into $n$-nodes at each $n$-ary connective. Each leaf is either a propositional
variable, a fixed point variable, or a constant. Each node is  signed as follows:
\begin{itemize}
\item the root node of $+\phi$ is signed $+$ and the root node of $-\phi$ is signed $-$;
\item if a node is $\vee, \wedge, \Diamond, \Box$, $\Diamondblack$, or $\blacksquare$ assign the same sign to its successor nodes;
\item if a node is $\rightarrow$, assign the opposite sign to its left successor, and the same sign to its right successor;
\item if a node is $-$, assign the same sign to its left successor, and the opposite sign to its right successor;
\item if a node is $\mu x.\varphi(x)$, $\mu^* x.\varphi(x)$, $\nu x.\varphi(x)$ or $\nu^* x.\varphi(x)$ (with every free occurrence of $x$ in the positive generation tree of $\varphi$ labelled positively)
then assign the same sign to the successor node.
\end{itemize}
A node in a signed generation tree is said to be
\emph{positive} if it is signed ``$+$'' and \emph{negative} if it is signed ``$-$''. Examples of signed generation trees can be
found in Figure~\ref{Fig:GoodBranches} and Figure~\ref{Fig:GoodBranches:Inductive}.

\paragraph{Order types}
We will often be using formulas in $n$ variables and hence use $\overline{x}$ to denote $n$-tuple of variables.
An \emph{order-type} over $n \in \mathbb{N}$ is an $n$-tuple $\epsilon \in \{1,\partial\}^n$.	Given an order-type $\epsilon$,
its opposite order type, denoted $\epsilon^\partial$, is given by $\epsilon^\partial_i=1 \Leftrightarrow \epsilon_i =\partial$ for
$1 \le i \le n$. We will also use the symbol $\tau$ to denote an order type over $n$.
When we define the Approximation Rules in Section~\ref{sec:algomustar} it will be useful
to write $\top^1$ and $\top^\partial$ for $\top$ and $\bot$ respectively. Similarly we will write $\bot^1$ and
$\bot^\partial$ for $\bot$ and $\top$ respectively.
%Given an order type $\epsilon$, the notation $\bot^{\epsilon_i}$ will denote $\bot$ if $\epsilon_i=1$ and $\top$ if $\epsilon_i=\partial$.

For both order types and tuples of variables, we will use the symbol $\oplus$ to denote concatenation.

\paragraph{Join- and meet-irreducible elements in products} Let $\A$ be a perfect lattice and $\tau$ and order type over $n$. It is not difficult to
prove that in the product $\A^n$ every join-irreducible element $\overline{j}$  is an $n$-tuple such that
$(\overline{j})_k \in \jty(\A)$ for some $1 \le k \le n$ and $(\overline{j})_i=\bot$ for all $i \neq k$. Dually,
every  completely meet-irreducible element in the product $\A^n$ is an $n$-tuple
$\overline{m}$ such that $(\overline{m})_k \in \mty(\A)$ for some $1 \le k\le n$ and
$(\overline{m})_i=\top$ for all $i \neq k$.

This characterization of $\jty(\A^n)$ and $\mty(\A^n)$ can be generalized to the case
of $\jty(\A^\tau)$ and $\mty(\A^\tau)$: every join-irreducible element $\overline{j} \in \jty(\A^\tau)$  is an $n$-tuple such that, for some $1 \le k \le n$,
$(\overline{j})_k \in \jty(\A)$ if $\tau_k = 1$ and $(\overline{j})_k \in \mty(\A)$ if $\tau_k = \partial$ while, for $i \neq k$, $(\overline{j})_i=\bot$ if $\tau_i = 1$ and $(\overline{j})_i=\top$ if $\tau_i = \partial$. Order-dually, every meet-irreducible element $\overline{m} \in \mty(\A^\tau)$  is an $n$-tuple such that, for some $1 \le k \le n$,
$(\overline{m})_k \in \mty(\A)$ if $\tau_k = 1$ and $(\overline{j})_k \in \jty(\A)$ if $\tau_k = \partial$ while, for $i \neq k$, $(\overline{m})_i=\top$ if $\tau_i = 1$ and $(\overline{m})_i=\bot$ if $\tau_i = \partial$.

Let $\A$ be a perfect lattice and let $\tau$ be an order type over $n$. The notation $\nomj^{\tau_i}$ will denote
a nominal if $\tau_i=1$ and a co-nominal if $\tau_i=\partial$. Dually, $\cnomn^{\tau_i}$ denotes a co-nominal
if $\tau_i=1$ and a nominal if $\tau_i=\partial$.
%The notation $\top^1$ and $\bot^{\partial}$ to denote $\top$, and the notation $\bot^1$ and $\top^{\partial}$ to denote $\bot$.

Accordingly, we will use the notation $\overline{\nomj}^{\tau}_i$ to denote an $n$-tuple in which the $i$-th component is $\nomj^{\tau_i}$ and, for all
$k \neq i$, the $k$-th component is $\bot^{\tau_k}$. Accordingly, such tuples range over the subset of $\jty(\A^{\tau})$ in which the $i$-th component comes from $\jty(\A^{\tau_i})$ and all other components are $\bot^{\tau_k}$, $k \neq i$. Dually, $\overline{\cnomn}^{\tau}_i$ denotes an $n$-tuple in which the $i$-th component is $\cnomn^{\tau_i}$ and, for all $k \neq i$, the $k$-th component is $\top^{\tau_k}$.

%\texttt{ELIMINATE $\backslash$ MATHBB NOTATION FOR ALGEBRAS, MAKE IT  $\backslash$ MATHBF}

\section{Canonicity results} \label{sec:canon-summary}

In this section we highlight and explain the main results of this paper and describe our methodology in broad strokes. Our approach goes via a ``U-shaped argument''
(see Figure~\ref{fig:generalU}), a generic version of which we now outline.

\begin{figure}
\begin{center}
\begin{tikzpicture}
\path (0,0) node(a) {$\A \models \alpha \le \beta$}
			(0,-0.5) node(b) [rotate=90] {$\Leftrightarrow$}
			(0,-1) node(c) {$\A^{\delta} \models_\A \alpha \le \beta$}
			(0,-1.5) node(i) [rotate=90] {$\Leftrightarrow$}
			(0,-2) node(d) {$\A^{\delta} \models_\A \texttt{pure}(\alpha \le \beta)$}
			(5,-2) node(f) {$\A^\delta \models \texttt{pure}(\alpha \le \beta)$}
			(5,-1) node(g) [rotate=90] {$\Longleftrightarrow$}
			(2.5,-2) node(j) {$\Longleftrightarrow$}
			(5,0) node(k) {$\A^\delta \models \alpha \le \beta$};
\end{tikzpicture}
\caption{The U-shaped argument for canonicity of inequalities interpreted on a lattice-based algebra $\A$.}
\label{fig:generalU}
\end{center}
\end{figure}

Going down the left-hand arm of the diagram, the first bi-implication is given by the fact that validity in $\A$ coincides with admissible validity in $\A^\delta$, modulo certain provisions regarding the fixed point binders. In the richer setting of $\A^\delta$ we can now interpret the extended language $\mathcal{L}^+_*$ where equivalences involving the adjoints $\Diamondblack$ and $\blacksquare$ as well as nominals and co-nominals are available. The aim is now to transform the inequality into a set of pure (quasi-)inequalities, denoted $\texttt{pure}(\alpha \le \beta)$ in Figure~\ref{fig:generalU}. This is done by means of an algorithm, $\mu^*$-ALBA, based on a calculus of rewrite rules which are presented in
%later sections, beginning with
Section~\ref{sec:algomustar}. The fact that admissible and ordinary validity coincide for pure inequalities is the linchpin for the transition from validity in $\A$ (simulated as admissible validity in $\A^{\delta}$) to validity in $\A^{\delta}$. This justifies the bi-implication forming the base of the ``U''.

We progress up the right-hand arm of the `U' by reversing the rewrite rules applied when coming down the left-hand side. The equivalences are justified by the fact that these rules preserve validity on perfect algebras. (The last statement needs to be qualified somewhat to accommodate fixed point binders, as will be specified in Section \ref{sec:canonicity}.) We observe that the equivalences on the right-hand arm of Figure~\ref{fig:generalU} justifies the first-order frame definability of $\alpha \leq \beta$, which is the topic of \cite{muALBA}. Indeed, the fact that $\alpha \leq \beta$ is equivalent on perfect algebras (or, dually, relational structures) to a set of pure quasi-inequalities guarantees first-order definability since the absence of propositional variables in  $\texttt{pure}(\alpha \le \beta)$ means that all quantification ranges over first-order definable subsets of the dual relational structure of $\A^{\delta}$.

As we now explain, the notion of canonicity of a formula in the presence of fixed points admits some variation.
%is not entirely straightforward.
If $\phi$ is a formula without fixed point binders, then the term function $\phi^{\A^{\delta}}$ \emph{extends} the term function $\phi^{\A}$, i.e., they agree on arguments from $A$. This is something which is usually of crucial importance in proving that an equation is canonical. As soon as we add fixed point binders this extension property fails. Indeed, $(\phi(X))^{\A^{\delta}}$ can have more pre-fixed points in $\A^{\delta}$ than $(\phi(X))^{\A}$ has in $\A$, and so $(\mu X. \phi(X))^{\A^{\delta}}$ would generally be smaller than $(\mu X. \phi(X))^{\A}$. This phenomenon creates significant obstacles for standard canonicity arguments.

One possible remedy to regain the extension property, is to insist that only pre-fixed points from the smaller algebra $\A$ be considered in calculating $(\mu X. \phi(X))^{\A^{\delta}}$, i.e., rather calculating $(\mu^* X. \phi(X))^{\A^{\delta}}$. This leads to the first notion of canonicity that we will examine. This is the notion described
by Bezhanishvili and Hodkinson~\cite{Bezhanishvili:Hodkinson:TCS}, and used by them to obtain completeness results for some axiomatic extensions of the basic (classical)
mu-calculus with respect to certain types of general frames.
We will refer to this as \emph{tame canonicity}.
In tame canonicity, all fixed point binders $\mu$ and $\nu$ in inequalities are replaced by the
binders $\mu^*$ and $\nu^*$ and interpreted as such in the canonical extension. Proving
that an $\Tm_1$-inequality is \emph{tame canonical}
is proving that $\A \models \phi \le \psi$ if and only if $\A^\delta \models \varphi^* \le \psi^*$ where
$\A$ is mu-algebra of the first kind. (Further explanation regarding the use of $\mu^*$ and $\nu^*$ is given at the end of
Section~\ref{sec:sound-fp-approx}.)

Using our algorithmic approach we are able to prove tame canonicity for what we
call \emph{tame inductive} mu-inequalities. This is a smaller class than that for
which correspondence results were shown in~\cite{muALBA} but extends the Sahlqvist mu-inequalities of~\cite{Bezhanishvili:Hodkinson:TCS} when projected to the classical case.

The second type of canonicity which we investigate is essentially the usual notion of canonicity  and thus we simply refer to this as
\emph{canonicity}. That is, for $\A$ a mu-algebra of the second kind, if $\A \models \phi \le \psi$, then $\A^\delta \models \phi \le \psi$.
(Note the additional assumption that $\A$ is of the second kind.) Our method of proof is as follows: we first show that $\A \models \phi \le \psi$ if and only if
$\A^\delta \models_\A \phi^* \le \psi^*$ and then that this implies
$\A^\delta \models \varphi \le \psi$. However, we are not able to show the converse, i.e., that $\A^\delta \models \phi \le \psi$ implies $\A \models \phi \le \psi$.

We will call the syntactically specified class of formulas for which we can prove canonicity the
\emph{restricted inductive mu-inequalities}.
%This class restricts the occurrence of
%fixed point binders so that all propositional variables under the scope of a fixed point binder must
%be eliminated.
This class is again a restriction of the class for which correspondence results were shown in~\cite{muALBA}, but it is a generalization of the inductive
inequalities from~\cite{DistMLALBA}.
%The generalization from~\cite{DistMLALBA} is a result of the fact that certain occurrences of $\mu$ and $\nu$ are permitted.

From a logical perspective, one of the main motivations for proving canonicity results is to obtain relational completeness results for axiomatic extension of logics --- if we have a general algebraic completeness result any non-theorem is refuted on an algebra of the logic and, if the axioms are preserved under canonical extensions, we can transfer this refutation to a relational structure obtained as the dual of the perfect algebra which is the canonical extension.  Unfortunately this does not work in the setting of the mu-calculus, since generally it will not be true that  $\A^\delta \models \phi \le \psi$ implies that  $\A \models \phi \le \psi$ or, contrapositively, that $\A \not \models \phi \le \psi$ implies $\A^\delta \not \models \phi \le \psi$. Hence refutations
will not necessarily be preserved when taking canonical extensions.

The proof of the tame canonicity of the tame inductive mu-inequalities and the  proof of the
canonicity of the restricted inductive mu-inequalities is concluded in Section~\ref{sec:canon-of-synclasses}.
The proof is a two-step process, beginning in Section~\ref{sec:canonicity}. There it is shown that
whenever a \emph{tame} run of $\mu^*$-ALBA succeeds on a mu-inequality $\phi \le \psi$, we have that $\phi \le \psi$ is \emph{tame canonical}.
It is also shown that whenever a \emph{proper} run succeeds on a mu-inequality $\alpha \le \beta$, then $\alpha \le \beta$ will be \emph{canonical}.
Section~\ref{sec:canon-of-synclasses} then completes the overall result by showing that for every tame inductive mu-inequality (respectively, a restricted inductive mu-inequality), there exists a tame (respectively, proper) run of $\mu^*$-ALBA which succeeds on that inequality.

%TO BE MOVED TO NEXT SECTION
%The reason we concern ourselves with the canonicity of \emph{inequalities} is because certain fragments
%(for example, distributive modal logic~\cite{GNV05}) do not have an implication in the language.
%In the absence of an implication, modal sequents of the form $\alpha \Rightarrow \beta$ are used to capture the logic.
%When using the algebraic semantics, the interpretation of inequality $\alpha \le \beta$ is used
%as the interpretation of the sequent $\alpha \Rightarrow \beta$. In addition, when working algebraically
%it is often easier to prove the order-theoretic relationship between the interpretation of two formulas
%than to show that a certain formula is always evaluated to $1$ in the algebra.

\section{Syntactic classes}
\label{sec:synclass}

In this section we introduce two new syntactically defined classes of mu-inequalities. Both will be subclasses of the recursive mu-inequalities introduced in \cite{muALBA}, and therefore the members of both classes will all have first-order frame correspondents.

The reason we concern ourselves with \emph{inequalities} is because certain fragments
(for example, distributive modal logic~\cite{GNV05}) do not have an implication in the language.
In the absence of an implication, modal sequents of the form $\phi \Rightarrow \psi$ are used to capture the logic.
When using the algebraic semantics, the interpretation of inequality $\phi \le \psi$ is used
as the interpretation of the sequent $\phi \Rightarrow \psi$.

For any $\Tmone$-sentence $\varphi(p_1,\ldots p_n)$, any order-type $\epsilon$ over $n$, and any $1\leq i\leq n$, an  \emph{$\epsilon$-critical node} in a signed generation tree of $\varphi$ is a (leaf) node $+p_i$ with $\epsilon_i = 1$, or $-p_i$ with $\epsilon_i = \partial$. An $\epsilon$-{\em critical branch} in the tree is a branch terminating in an $\epsilon$-critical node. The intuition, which will be built upon later, is that variable occurrences corresponding to  $\epsilon$-critical nodes are \emph{to be solved for, according to $\epsilon$}.

In the signed generation tree of a $\Tmone$-sentence $\varphi(p_1,\ldots p_n)$ a \emph{live branch} is a branch ending in a (signed) propositional variable. In particular, all critical branches are live. It follows that a branch is not live iff it ends in a propositional constant ($\top$ or $\bot$) or in a fixed point variable.

For every $\Tmone$-sentence $\varphi(p_1,\ldots p_n)$, and every order-type $\epsilon$, we say that $+\varphi$ (resp. $-\varphi$) {\em agrees with} $\epsilon$, and write $\epsilon(+\varphi)$ (resp.\ $\epsilon(-\varphi)$), if every leaf node in the signed generation tree of $+\varphi$ (resp.\ $-\varphi$) which is labelled with a propositional variable  is $\epsilon$-critical.
In other words, $\epsilon(+\varphi)$ (resp.\ $\epsilon(-\varphi)$) means that all propositional variable occurrences corresponding to leaves of $+\varphi$ (resp.\ $-\varphi$) are to be solved for according to $\epsilon$. We will also make use of the {\em sub-tree relation} $\gamma\prec \varphi$, which extends to signed generation trees, and we will write $\epsilon(\gamma)\prec \ast \varphi$ to indicate that $\gamma$, regarded as a sub- (signed generation) tree of  $\ast \varphi$, agrees with $\epsilon$.

\begin{table}
\begin{center}
\begin{tabular}{| c | c | c | c |}
\hline
Outer Skeleton ($P_3$) &Inner Skeleton ($P_2$) &PIA ($P_1$)\\
\hline
$\Delta$-adjoints &Binders &Binders\\
\begin{tabular}{ c c c c c c c  }
$+$ &$\vee$ &$\wedge$ &${}$ &${}$  &${}$ &${}$\\
$-$ &$\wedge$ &$\vee$\\
\hline
\end{tabular}
&\begin{tabular}{c c c c c c c c }
& & &$+$ &$\mu$ & & &\\
& & &$-$ &$\nu$ & & &\\
\hline
\end{tabular}
&\begin{tabular}{c c c c c c c c }
& & &$+$ &$\nu$ & & &\\
& & &$-$ &$\mu$ & & &\\
\hline
\end{tabular}
\\
SLR &SLA &SRA\\
\begin{tabular}{c c c c c}
$+$ &$\Diamond$ &$\lhd$ &$\circ$ &$-$\\
$-$ &$\Box$ &$\rhd$ &$\star$ &$\rightarrow$\\
\end{tabular}
&\begin{tabular}{c c c c c c c c}
&$+$ &$\Diamond$ &$\lhd$ &$\vee$ &\\
&$-$ &$\Box$ &$\rhd$ &$\wedge$ &\\
\hline
\end{tabular}
&\begin{tabular}{c c c c c c c c}
&$+$ &$\Box$ &$\rhd$ &$\wedge$ &\\
&$-$ &$\Diamond$ &$\lhd$ &$\vee$ &\\
\hline
\end{tabular}
\\
&SLR &SRR\\
&\begin{tabular}{c c c c c c}
$+$ &$\wedge$ &$\circ$ &$-$\\
$-$ &$\vee$ &$\star$ &$\rightarrow$\\
\end{tabular}
&\begin{tabular}{c c c c c c}
$+$ &$\vee$ &$\star$ &$\rightarrow$\\
$-$ &$\wedge$ &$\circ$ &$-$\\
\end{tabular}
\\
\hline
\end{tabular}
\end{center}
\caption{Skeleton and PIA nodes.}\label{Skeleton:PIA:Node:Table}
\end{table}

While reading the following definition, the reader might find it useful to refer to Example \ref{Examp:Good:Branches} for an illustration of the concepts being introduced.

\begin{definition}
Nodes in signed generation trees will be called \emph{skeleton nodes} and \emph{PIA nodes} and further classified as $\Delta$-adjoint, SLR, Binders, SLA, SRA or SRR,  according to the specification given in table \ref{Skeleton:PIA:Node:Table}.\footnote{The interpretations of the binary connectives {\em fusion} $\circ$ and {\em fission} $\star$ included in table \ref{Skeleton:PIA:Node:Table} preserve, respectively, joins and meets coordinatewise. The unary connectives $\lhd$ and $\rhd$ are, respectively meet- and join-reversing and were studied in the context of Distributive Modal Logic by Gehrke, Nagahashi and Venema~\cite{GNV05}, see also \cite{DistMLALBA}. Although $\circ$, $\star$, $\lhd$ and $\rhd$ are not part of our language, their inclusion here illustrates the fact that the classification and subsequent definitions are based solely on the order-theoretic behaviours of the interpretations of connectives and can hence be easily ported to other languages.} The acronym \emph{PIA} stands for ``positive implies atomic''.
\end{definition}

\begin{definition}
\label{def:good branch}

Let $\phi(p_1,\ldots,p_n)$ be a formula in the propositional variables $p_1, \ldots, p_n$,
and let
$\epsilon$ be an order type on $\{1, \ldots, n\}$.
% and $<_{\Omega}$ a strict partial order on the variables $p_1,\ldots p_n$.

A branch in a signed generation tree $\ast \varphi$, for $\ast \in \{+, - \}$, ending in a propositional variable is an $\epsilon$-\emph{good branch} if, apart from the leaf, it is the concatenation of three paths $P_1$, $P_2$, and $P_3$, each of which may possibly be of length $0$, such that $P_1$ is a path from the leaf consisting only of PIA-nodes, $P_2$ consists only of inner skeleton-nodes, and $P_3$ consists only of outer skeleton-nodes and, moreover, it satisfies conditions (GB1), (GB2) and (GB3), below.

\begin{description}

\item[{\bf (GB1)}] The formula corresponding to the uppermost node on $P_1$ is a mu-sentence.

\item[{\bf (GB2)}] For every SRR-node in $P_1$ of the form $\gamma \odot \beta$ or $\beta \odot \gamma$, where $\beta$ is the side where the branch lies, $\gamma$ is a mu-sentence and $\epsilon^\partial (\gamma)\prec \ast\varphi$ (i.e., $\gamma$ contains no variable occurrences to be solved for --- see above).\\
Unravelling the condition $\epsilon^\partial (\gamma)\prec \ast\varphi$ specifically to the $\mathcal{L}_1$-signature (expanded with $\circ$ and $\star$), we obtain:\\
\ a) if $\gamma\odot \beta$ is $+(\gamma \star \beta)$, $+(\gamma \vee \beta)$, $+(\beta \rightarrow \gamma)$, or $-(\beta - \gamma)$, then $\epsilon^\partial(+\gamma)$;\\
\ b) if $\gamma\odot \beta$ is $+(\gamma \to \beta)$, $-(\gamma \wedge \beta)$, $-(\gamma \circ \beta)$, or $-(\gamma - \beta)$, then $\epsilon^{\partial}(-\gamma)$ (equivalently, $\epsilon(+\gamma)$).

\item[{\bf (GB3)}] For every SLR-node in $P_2$ of the form $\gamma \odot \beta$ or $\beta \odot \gamma$, where $\beta$ is the side where the branch lies, $\gamma$ is a mu-sentence and $\epsilon^\partial (\gamma)\prec \ast\varphi$ (see above for this notation).\\
Unravelling the condition $\epsilon^\partial (\gamma)\prec \ast\varphi$ specifically to the $\mathcal{L}_1$-signature (expanded with $\circ$ and $\star$), we obtain:\\
\ a) if $\gamma \odot \beta$ is $-(\gamma \star \beta)$,$-(\gamma \vee \beta)$, $-(\beta \rightarrow \gamma)$, or $+(\beta - \gamma)$, then $\epsilon^{\partial}(-\gamma)$ (equivalently, $\epsilon(+\gamma)$);\\
\ b) if $\gamma \odot \beta$ is $-(\gamma \to \beta)$, $+(\gamma \wedge \beta)$, $+(\gamma \circ \beta)$, or $+(\gamma - \beta)$, then $\epsilon^{\partial}(+\gamma)$.
\end{description}
\end{definition}

As promised, the example below illustrates the concept of an $\epsilon$-good branch.

\begin{example}\label{Examp:Good:Branches}
The generation trees of $+ \Diamond \mu X. (\Diamond X \vee \Box (\Box \Diamond q \vee p ))$ and  $- \nu Y. ([ \Box((q \rightarrow \bot) \wedge (p \rightarrow \bot)) \rightarrow \bot] \wedge \Box Y)$ are given in Figure \ref{Fig:GoodBranches}. Taking $\epsilon$ to be the order type with $\epsilon_p = 1$ and $\epsilon_q = \partial$ there are two $\epsilon$-critical branches, namely the one ending in $+p$ and the one ending in $-q$. These are both $\epsilon$-good. Indeed, they can correctly be split into $P_1$, $P_2$ and $P_3$ paths as indicated in the figure.  Let us verify that the branch ending in $+p$ moreover satisfies (GB1), (GB2) and (GB3): the formula $\Box (\Box \Diamond q \vee p )$ corresponding to the uppermost $P_1$ node is a mu-sentence as it contains no fixed point variables, so (GB1) holds. The only SRR-node on $P_1$ is $+ \vee$ and we must check that it satisfies (GB2): here the role of $\gamma$ is played by $\Box \Diamond q$ which, firstly, is a mu-sentence; secondly, the only occurring propositional variable is $q$ which occurs positively while $\epsilon_q = \partial$, hence $\epsilon^{\partial}(\Box \Diamond q) \prec + \Diamond \mu X. (\Diamond X \vee \Box (\Box \Diamond q \vee p ))$. There are no SLR nodes in $P_2$, so (GB3) holds vacuously.

Turning our attention to the branch ending in $-q$: the formula $\Box((q \rightarrow \bot) \wedge (p \rightarrow \bot))$ corresponding to the uppermost $P_1$ node is a mu-sentence, and hence (GB1) is satisfied. There is one SLR-node, namely $- \rightarrow$, which verifies (GB3) as the formula playing the role of $\gamma$ is the constant sentence $\bot$. Similarly, there is one SRR-node, $+ \rightarrow$, which satisfies (GB2) as, in this instance, formula playing the role of $\gamma$ is again the constant sentence $\bot$.
\end{example}

%We will further be interested in
\noindent Our main interest is in
$\epsilon$-good branches satisfying some of the
%following additional properties:
additional properties in the following definition.

\begin{definition}\label{def:nbpia-omconf}
Let $\phi(p_1,\ldots,p_n)$ be a formula in the propositional variables $p_1, \ldots, p_n$, let $\epsilon$ be an order type on $\{1, \ldots, n\}$ and $<_{\Omega}$ a strict partial order on the variables $p_1,\ldots p_n$. An $\epsilon$-good branch may satisfy one or more of the following properties:

\begin{description}
\item[{\bf (NB-PIA)}] $P_1$ contains no fixed point binders.

%\item[{\bf (NB-SKEL)}] $P_2$ contains no binders.

\item[{\bf (NL)}] For every SLR-node in $P_2$ of the form $\gamma \odot \beta$ or $\beta \odot \gamma$, where $\beta$ is the side where the branch lies, the signed generation tree of $\gamma$ contains no live branches.
Unravelling this condition specifically to the $\mathcal{L}$-signature, we obtain:\\
\ a) if $\gamma \odot \beta$ is $-(\gamma \star \beta)$,$-(\gamma \vee \beta)$, $-(\beta \rightarrow \gamma)$, or $+(\beta - \gamma)$, then $-\gamma$ contains no live branches;\\
\ b) if $\gamma \odot \beta$ is $-(\gamma \to \beta)$, $+(\gamma \wedge \beta)$, $+(\gamma \circ \beta)$, or $+(\gamma - \beta)$, then $+\gamma$ contains no live branches.

\item[{\bf ($\Omega$-CONF)}] For every SRR-node in $P_1$ of the form $\gamma \odot \beta$ or $\beta \odot \gamma$, where $\beta$ is the side where the branch lies: $p_j <_{\Omega} p_i$ for every $p_j$ occurring in $\gamma$, where $p_i$ is the propositional variable labelling the leaf of the branch.
\end{description}

\end{definition}

\begin{figure}
\begin{center}
\begin{tikzpicture}[level distance=6mm]
%\tikzset{style={align=center,anchor=north}}
\tikzstyle{level 6}=[sibling distance=30mm]
\node(A){$+ \Diamond$}
      child{node(B) {$+ \mu$}
            child{node(C) {$+\vee$}
                child{node {$+ \Diamond$}
                    child{node {$+ X$}}
                    }
                child{node(D) {$+\Box$}
                    child{node(E) {$+\vee$}
                        child{node {$+\Box$}
                            child{node {$+ \Diamond$}
                                child{node {$+q$}}
                            }
                        }
                        child{node {$+p$}}
                    }
                }
            }
      };
%ADDING THE BRACES
\draw [decorate,decoration={brace,amplitude=2pt},xshift=-2pt,yshift=0pt]
(0.4, 0.2) -- (0.4,-0.3) node [black,midway,xshift=0.3cm, yshift=0.0cm]
{{\footnotesize{$P_3$}}};

\draw [decorate,decoration={brace,amplitude=2pt},xshift=-2pt,yshift=0pt]
(0.4,-0.4) -- (0.4,-1.3) node [black,midway,xshift=0.3cm, yshift=0.0cm]
{{\footnotesize{$P_2$}}};

\draw [decorate,decoration={brace,amplitude=2pt},xshift=-2pt,yshift=0pt]
(1.2,-1.6) -- (1.2,-2.5) node [black,midway,xshift=0.3cm, yshift=0.0cm]
{{\footnotesize{$P_1$}}};

\end{tikzpicture}
\hspace{2cm}
\begin{tikzpicture}[level distance=6mm]
%\tikzset{style={align=center,anchor=north}}
\tikzstyle{level 2}=[sibling distance=20mm]
\tikzstyle{level 4}=[sibling distance=25mm]
\tikzstyle{level 5}=[sibling distance=25mm]
\tikzstyle{level 6}=[sibling distance=10mm]
\node{$- \nu$}
    child{node {$- \wedge$}
        child{node {$- \rightarrow$}
            child{node {$+ \Box$}
                child{node {$+ \wedge$}
                    child{node {$+ \rightarrow$}
                        child{node {$- q$}}
                        child{node {$+ \bot$}}
                    }
                    child{node {$+ \rightarrow$}
                        child{node {$- p$}}
                        child{node {$+ \bot$}}
                    }
                }
            }
                child{node {$- \bot$}}
        }
        child{node {$- \Box$}
            child{node {$- Y$}}
        }
    };

%ADDING THE BRACES
\draw [decorate,decoration={brace,amplitude=2pt,mirror},xshift=-2pt,yshift=0pt]
(-1.5, 0.1) -- (-1.5,-1.3) node [black,midway,xshift=-0.3cm, yshift=-0.0cm]
{{\footnotesize{$P_2$}}};
\draw [dotted] (-1.5,0.1) -- (-0.2,0.1);

\draw [decorate,decoration={brace,amplitude=2pt,mirror},xshift=-2pt,yshift=0pt]
(-3.7, -1.6) -- (-3.7,-3.2) node [black,midway,xshift=-0.3cm, yshift=-0.0cm]
{{\footnotesize{$P_1$}}};
\draw [dotted] (-3.7,-1.6) -- (-2.1,-1.6);

\end{tikzpicture}
\end{center}
\caption{The generation trees of $+ \Diamond \mu X. (\Diamond X \vee \Box (\Box \Diamond q \vee p ))$ and $- \nu Y. ([ \Box((q \rightarrow \bot) \wedge (p \rightarrow \bot)) \rightarrow \bot] \wedge \Box Y)$ with $\epsilon$-good branches indicated for an order type $\epsilon$ with $\epsilon_p = 1$ and $\epsilon_q = \partial$.}\label{Fig:GoodBranches}
\end{figure}
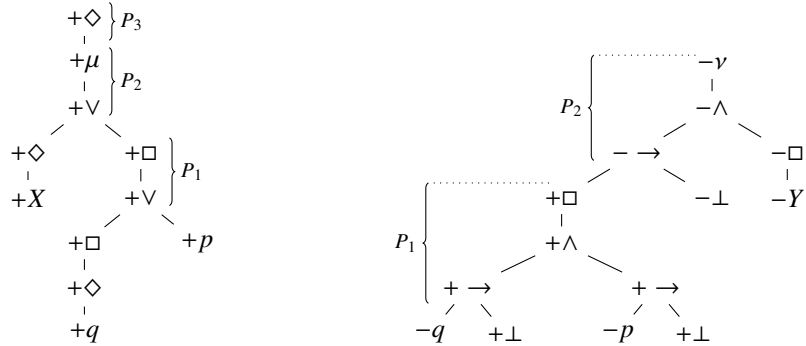

\begin{remark}
\begin{enumerate}
\item Since there is overlap between the signed connectives in the columns of Table \ref{Skeleton:PIA:Node:Table}, the borders between the $P_1$, $P_2$ and $P_3$ parts of a good branch need not be uniquely determined. For strategic reasons, to be discussed below, it is preferable make $P_3$ as long as possible at the cost of $P_2$ and, in turn, make $P_2$ as long as possible at the cost of $P_1$.
\item The abbreviations SLR, SLA, SRA and SRR stand for syntactically left residual, left adjoint, right adjoint and right residual, respectively. Nodes are thus classified according to the order-theoretic properties of their interpretations.
\end{enumerate}
\end{remark}

\begin{definition}\label{def:rstrctd:tm:indctv}
Given an order type $\epsilon$ and a strict partial order $<_{\Omega}$ on the variables $p_1,\ldots p_n$, the signed generation tree $\ast \phi$, $\ast \in \{-, + \}$, of a term $\phi(p_1,\ldots p_n)$ is called

\begin{enumerate}
\item \emph{$\epsilon$-recursive} if every $\epsilon$-critical branch is $\epsilon$-good.
\item \emph{$(\Omega, \epsilon)$-inductive} it is $\epsilon$-recursive and every $\epsilon$-critical branch satisfies ($\Omega$-CONF).
\item \emph{restricted $(\Omega,\epsilon)$-inductive} if it is $(\Omega,\epsilon)$-inductive and
    \begin{enumerate}
    \item every $\epsilon$-critical branch satisfies (NB-PIA) and (NL),
    \item every occurrence of a binder is on an $\epsilon$-critical branch.
    \end{enumerate}
\item \emph{tame $(\Omega,\epsilon)$-inductive} if it is $(\Omega,\epsilon)$-inductive and
    %W: Since there are no binders on critical branches in these formulas, P_2 and hence the need for NL, falls away.
    \begin{enumerate}
    \item $\Omega = \varnothing$,
    \item no binder occurs on any $\epsilon$-critical branch,
    \item the only nodes involving binders which are allowed to occur are $+\nu$ and $-\mu$.
    \end{enumerate}
\end{enumerate}

An inequality $\phi \leq \psi$ is $\epsilon$-recursive (resp., $(\Omega, \epsilon)$-inductive, restricted $(\Omega,\epsilon)$-inductive, tame $(\Omega,\epsilon)$-inductive) if $+\phi$ and $-\psi$ are both $\epsilon$-recursive (resp., $(\Omega, \epsilon)$-inductive, restricted $(\Omega,\epsilon)$-inductive, tame $(\Omega,\epsilon)$-inductive).

An inequality $\phi \leq \psi$ is recursive (resp., inductive, restricted inductive, tame inductive) if $\phi \leq \psi$ is $\epsilon$-recursive (resp., $(\Omega, \epsilon)$-inductive, restricted $(\Omega,\epsilon)$-inductive, tame $(\Omega,\epsilon)$-inductive) for some strict partial order $\Omega$ and order type $\epsilon$.

The corresponding classes of inequalities will be referred to as the \emph{recursive} (resp., \emph{inductive, restricted inductive, tame inductive})
\emph{mu-inequalities}, or the \emph{recursive} (resp., \emph{inductive, restricted inductive, tame inductive}) \emph{mu-formulas}, if the inequality signs have been replaced with implications.
\end{definition}

\begin{remark}
If we were to interpret $\mathcal{L}_1$ formulas classically, the tame inductive formulas would include the \emph{Sahlqvist fixed point formulas} of \cite[Definition 5.1]{Bezhanishvili:Hodkinson:TCS}.
%Although they do \emph{not} constitute a substantial extension, in
In
this setting the tame inductive formulas are, for example, slightly more liberal in terms of what they allow to play the role of a `boxed atom' (see Example \ref{Examp:Tame:Ind}, below).
\end{remark}

\begin{example}\label{Examp:Restricted:Ind}
Consider the inequality $\Diamond \mu X. (\Diamond X \vee \Box (\Box \Diamond q \vee p )) \leq \nu Y. ([ \Box((q \rightarrow \bot) \wedge (p \rightarrow \bot)) \rightarrow \bot] \wedge \Box Y)$. The generation trees for $+ \Diamond \mu X. (\Diamond X \vee \Box (\Box \Diamond q \vee p ))$ and $- \nu Y. ([ \Box((q \rightarrow \bot) \wedge (p \rightarrow \bot)) \rightarrow \bot] \wedge \Box Y)$ were given in Figure \ref{Fig:GoodBranches}.

From the generation trees we can now see that this is a restricted $(\Omega, \epsilon)$-inductive inequality for $\epsilon$ such that $\epsilon_p = 1$ and $\epsilon_q = \partial$ and $\Omega$ such that $q <_{\Omega} p$. Indeed, given this $\epsilon$, there are two $\epsilon$-critical branches: the one ending in $+p$ and the one ending in $-q$. In Example \ref{Examp:Good:Branches} we verified that these branches are $\epsilon$-good.  It is straightforward to check that the branch ending in $+p$ also satisfies (NB-PIA) and (NL). We need to check that the SRR node $+\vee$ satisfies ($\Omega$-CONF): in this case $\Box \Diamond q$ plays the role of $\gamma$, and indeed $\epsilon^{\partial}(+ \Box \Diamond q)$ and $q <_{\Omega} p$.

Now considering the branch ending in $-q$: it is easy to check that is satisfies (NB-PIA) and (NL). To check that is satisfies ($\Omega$-CONF) we need to consider the SRR-node $+ \rightarrow$, but here the role of $\gamma$ is played by $+ \bot$, so clearly $\epsilon^{\partial}(\gamma)$.

The inequality is \emph{not} $(\Omega', \epsilon')$-inductive for any other order type $\epsilon'$ or dependency order $\Omega'$. Indeed, we cannot have $\epsilon'_q = 1$, as the branch ending in $+q$  cannot be correctly divided in  $P_1$, $P_2$ and $P_3$ parts due the presence of $+\Diamond$ in the scope of $+\Box$, and is therefore not good. Also we cannot have $\epsilon'_p = \epsilon'_q$, since in that case there would be a binder in one of the trees the occurrence of which is not on a critical branch, violating Definition \ref{def:rstrctd:tm:indctv}.3(b). So since $\epsilon$ is the only possible order type, the configuration  $+ \Box (\Box \Diamond q \vee p )$ and ($\Omega$-CONF) dictate that $\Omega$ is the only possible dependency order.
\end{example}

\begin{figure}
\begin{center}
\begin{tikzpicture}[level distance=6mm]
%\tikzset{style={align=center,anchor=north}}
\tikzstyle{level 6}=[sibling distance=30mm]
\node{$+ \Diamond$}
    child{node {$+ \wedge$}
        child{node {$+ \Box$}
            child{node {$+\vee$}
                child{node {$+p$}}
                child{node {$+ \Box$}
                    child{node {$+\bot$}}
                }
            }
        }
        child{node {$+ \Box$}
            child{node {$+q$}}
        }
    };

%ADDING THE BRACES
\draw [decorate,decoration={brace,amplitude=2pt, mirror},xshift=-2pt,yshift=0pt]
(-0.3, 0.2) -- (-0.3,-0.7) node [black,midway,xshift=-0.3cm, yshift=0.0cm]
{{\footnotesize{$P_3$}}};

\draw [decorate,decoration={brace,amplitude=2pt, mirror},xshift=-2pt,yshift=0pt]
(-1,-1) -- (-1,-1.9) node [black,midway,xshift=-0.3cm, yshift=0.0cm]
{{\footnotesize{$P_1$}}};
%\draw [dotted] (-1.3,-1) -- (-0.7,-1);
%\draw [dotted] (-1.3,-1.9) -- (-0.8,-1.9);

\draw [decorate,decoration={brace,amplitude=2pt},xshift=-2pt,yshift=0pt]
(0.4, 0.2) -- (0.4,-0.7) node [black,midway,xshift=0.3cm, yshift=0.0cm]
{{\footnotesize{$P_3$}}};

\draw [decorate,decoration={brace,amplitude=2pt},xshift=-2pt,yshift=0pt]
(1.2,-1) -- (1.2,-1.4) node [black,midway,xshift=0.3cm, yshift=0.0cm]
{{\footnotesize{$P_1$}}};
%\draw [dotted] (1.2,-0.4) -- (0.3,-0.4);
%\draw [dotted] (1.2, -1.3) -- (1, -1.3);

\end{tikzpicture}
\hspace{2cm}
\begin{tikzpicture}[level distance=6mm]

%\tikzset{style={align=center,anchor=north}}
\tikzstyle{level 6}=[sibling distance=30mm]
\node{$- \mu$}
    child{node {$-\vee$}
        child{node {$-\Diamond$}
            child{node {$- \wedge$}
                child{node {$-p$}}
                child{node {$-q$}}
            }
        }
        child{node {$-\Box$}
            child{node {$-Y$}}
        }
    };
\end{tikzpicture}
\end{center}
\caption{The generation trees of $+ (\Diamond(\Box \bot \vee p) \wedge \Box q)$ and $- \mu Y. (\Diamond(p \wedge q) \wedge \Box Y)$ with $\epsilon$-good branches indicated for an order type $\epsilon$ with $\epsilon_p = 1 = \epsilon_q$.}\label{Fig:GoodBranches:Inductive}
\end{figure}
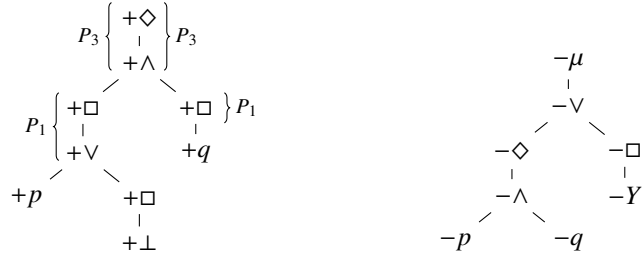

\begin{example}\label{Examp:Rec}
Consider the formula $(\mu X.(p \vee \Diamond X) \wedge \mu X.(q \vee \Diamond X)) \rightarrow \mu X. ( (p \wedge \mu Y.(q \vee \Diamond Y)) \vee \Diamond X)$. This is valid at a point $w$ in a frame iff all point reachable from $w$ are reachable from one another. The corresponding inequality $(\mu X.(p \vee \Diamond X) \wedge \mu X.(q \vee \Diamond X)) \leq \mu X. ( (p \wedge \mu Y.(q \vee \Diamond Y)) \vee \Diamond X)$ is $(\Omega, \epsilon)$-inductive with $\epsilon_p = 1 = \epsilon_q$ (and for no other order type) and any strict partial order $\Omega$. Note that it is not restricted inductive nor tame inductive, since binders occur on both critical and non-critical branches for every possible choice of $\epsilon$.
\end{example}

\begin{example}\label{Examp:Tame:Ind}
The inequality $\Diamond(\Box \bot \vee p) \wedge \Box q \leq \mu Y. (\Diamond(p \wedge q) \wedge \Box Y)$ is tame $(\Omega, \epsilon)$-inductive with $\epsilon_p = 1 = \epsilon_q$ (and $\Omega = \varnothing$), as can be seen from the generation trees in Figure \ref{Fig:GoodBranches:Inductive}. Moreover, the corresponding implication it is \emph{not} a Sahlqvist fixed point formula in the sense of \cite[Definition 5.1]{Bezhanishvili:Hodkinson:TCS}, as that definition (and the classical definition of Sahlqvist formulas) would not allow the configuration $\Box(\Box \bot \vee p)$ where $p$ is a variable to be solved for. It is not tame $(\Omega, \epsilon')$-inductive for any $\epsilon'$ with $\epsilon'_p = \partial$ or $\epsilon'_q = \partial$, as in such cases the binder $\mu$ would appear on an $\epsilon$-critical branch.

Finally note that this inequality is not restricted inductive. Indeed, since the definition of restricted $(\Omega, \epsilon)$-inductive inequalities requires all binders to occur on critical branches, the $- \mu$ would have to be on a critical branch, but then it would have to be on the $P_1$ part of that branch (according to the definition of good branches), yet (NB-PIA) prohibits this for restricted $(\Omega, \epsilon)$-inductive inequalities.
\end{example}

The relationships between the different classes of inequalities or, equivalently, formulas, under consideration are illustrated in the diagram below. The inclusions follow from the definitions. Each of the regions is non-empty: that regions $A$, $B$ and $F$ are non-empty follows from Examples \ref{Examp:Rec}, \ref{Examp:Restricted:Ind} and \ref{Examp:Tame:Ind}, respectively. Region $C$ contains the inductive formulas which are not Sahlqvist and which contain no fixed point binders, e.g., the Frege axiom $(p\to (q\to r)) \to ((p\to q)\to (p\to r))$ (see \cite[Examples 3.16 and 7.5]{DistMLALBA}). Region $D$ contains the binder free (intuitionistic) Sahlqvist formulas, e.g., $p  \rightarrow \Box \Diamond p$, while the Sahlqvist $mu$-formulas which do contain binders are found in region $D$, e.g., $\Diamond p \leq \Box \mu X. (p \vee \Diamond X)$ (cf. \cite[Example 6.4]{Bezhanishvili:Hodkinson:TCS}).

\begin{center}
\def\svgwidth{7cm}
%% Creator: Inkscape 0.48.2, www.inkscape.org
%% PDF/EPS/PS + LaTeX output extension by Johan Engelen, 2010
%% Accompanies image file '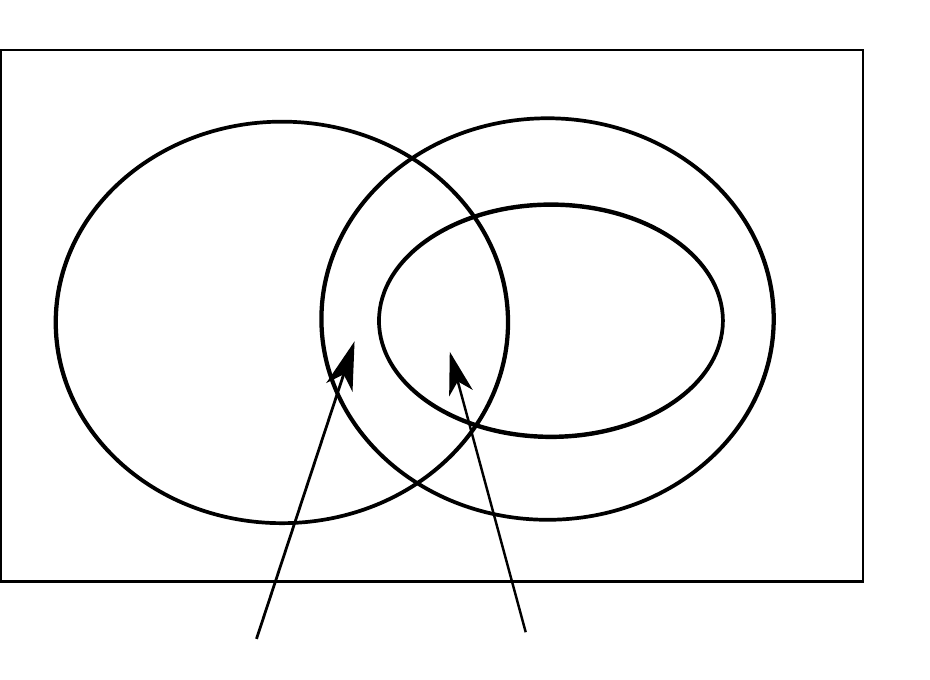' (pdf, eps, ps)
%%
%% To include the image in your LaTeX document, write
%%   \input{<filename>.pdf_tex}
%%  instead of
%%   \includegraphics{<filename>.pdf}
%% To scale the image, write
%%   \def\svgwidth{<desired width>}
%%   \input{<filename>.pdf_tex}
%%  instead of
%%   \includegraphics[width=<desired width>]{<filename>.pdf}
%%
%% Images with a different path to the parent latex file can
%% be accessed with the `import' package (which may need to be
%% installed) using
%%   \usepackage{import}
%% in the preamble, and then including the image with
%%   \import{<path to file>}{<filename>.pdf_tex}
%% Alternatively, one can specify
%%   \graphicspath{{<path to file>/}}
%% 
%% For more information, please see info/svg-inkscape on CTAN:
%%   http://tug.ctan.org/tex-archive/info/svg-inkscape
%%
\begingroup%
  \makeatletter%
  \providecommand\color[2][]{%
    \errmessage{(Inkscape) Color is used for the text in Inkscape, but the package 'color.sty' is not loaded}%
    \renewcommand\color[2][]{}%
  }%
  \providecommand\transparent[1]{%
    \errmessage{(Inkscape) Transparency is used (non-zero) for the text in Inkscape, but the package 'transparent.sty' is not loaded}%
    \renewcommand\transparent[1]{}%
  }%
  \providecommand\rotatebox[2]{#2}%
  \ifx\svgwidth\undefined%
    \setlength{\unitlength}{267.6777832bp}%
    \ifx\svgscale\undefined%
      \relax%
    \else%
      \setlength{\unitlength}{\unitlength * \real{\svgscale}}%
    \fi%
  \else%
    \setlength{\unitlength}{\svgwidth}%
  \fi%
  \global\let\svgwidth\undefined%
  \global\let\svgscale\undefined%
  \makeatother%
  \begin{picture}(1,0.73144984)%
    \put(0,0){\includegraphics[width=\unitlength]{Venn.pdf}}%
    \put(-0.00257167,0.69406814){\color[rgb]{0,0,0}\makebox(0,0)[lb]{\smash{Recursive $\mu$}}}%
    \put(0.03930334,0.60667681){\color[rgb]{0,0,0}\makebox(0,0)[lb]{\smash{Rest. Ind. $\mu$}}}%
    \put(0.62737382,0.6048562){\color[rgb]{0,0,0}\makebox(0,0)[lb]{\smash{Tame. Ind. $\mu$}}}%
    \put(0.51813476,0.51928556){\color[rgb]{0,0,0}\makebox(0,0)[lb]{\smash{Sahlqvist $\mu$}}}%
    \put(0.20316197,0.00586169){\color[rgb]{0,0,0}\makebox(0,0)[lb]{\smash{Inductive}}}%
    \put(0.51631409,0.00950303){\color[rgb]{0,0,0}\makebox(0,0)[lb]{\smash{Sahlqvist}}}%
    \put(0.08354986,0.1509757){\color[rgb]{0,0,0}\makebox(0,0)[lb]{\smash{$A$}}}%
    \put(0.18550637,0.35853002){\color[rgb]{0,0,0}\makebox(0,0)[lb]{\smash{$B$}}}%
    \put(0.35846827,0.36217136){\color[rgb]{0,0,0}\makebox(0,0)[lb]{\smash{$C$}}}%
    \put(0.45372052,0.36026518){\color[rgb]{0,0,0}\makebox(0,0)[lb]{\smash{$D$}}}%
    \put(0.61335951,0.36217136){\color[rgb]{0,0,0}\makebox(0,0)[lb]{\smash{$E$}}}%
    \put(0.564202,0.19831273){\color[rgb]{0,0,0}\makebox(0,0)[lb]{\smash{$F$}}}%
  \end{picture}%
\endgroup%

\end{center}

\section{The algorithm $\mu^{*}$-ALBA}\label{sec:algomustar}

The algorithm $\mu$-ALBA was introduced in~\cite[Section 1]{muALBA}. Here we present a variant called $\mu^{*}$-ALBA, and which is a restricted version of $\mu$-ALBA. The goal of our algorithm is to \emph{eliminate propositional variables} from inequalities, while maintaining admissible validity. The purpose of this is to make the transition from \emph{admissible validity} to \emph{validity} in the argument for canonicity, as outlined in Section \ref{sec:canon-summary}. Before formally stating the rules which can be applied at each stage of the algorithm, we outline its general strategy.

The final stage of the algorithm, where the actual elimination of propositional variables takes place, is the application of the two Ackermann rules. There are
very specific syntactic requirements of the quasi-inequalities which can be used as input for
the right and left Ackermann rules, (RA) and (LA). The steps before this final elimination stage
are used to get the original inequality into the correct `shape' so that the Ackermann rules can be applied.

\begin{itemize}
\item \textbf{Preprocessing:} operations are distributed according to the signed generation trees (see below),
inequalities are split where possible with ($\vee$LA) and ($\wedge$RA),  and
simple propositional variable elimination takes place using ($\bot$) and ($\top$). Preprocessing may split the original inequality into a number of inequalities, on each of which we proceed separately.
\item \textbf{Conversion of fixed point binders:} all occurrences of $\mu X.\phi(X)$ are converted to $\mu^* X.\phi(X)$
and all occurrences of $\nu X. \psi(X)$ are converted to  $\nu^* X.\psi(X)$. We emphasize that this step is required in both
tame \emph{and} proper runs of $\mu^*$-ALBA (see below).
\item \textbf{First approximation:} an inequality is converted by (FA) into a quasi-inequality consisting of an implication with two inequalities in the
antecedent, and one inequality in the consequent. The inequality in the consequent contains no propositional variables and thus all
steps after this point are aimed only at eliminating propositional variables from the two inequalities in the antecedent.
\item \textbf{Residuation, adjunction and approximation rules:} these rules prepare the antecedents of quasi-inequalities for the application of the Ackermann rules. The residuation and adjunction rules are straightforward applications of properties of the operations on perfect algebras. The approximation rules are more intricate in their formulation. The approximation rules for fixed point binders are the crucial link which enables us to apply this algorithmic approach to the mu-calculus. When dealing with inductive inequalities, a winning strategy (see Section \ref{sec:canon-of-synclasses}) dictating the order in which these various rules are to be applied is determined by the way in which each critical branch is divided into its $P_1,P_2$ and $P_3$ parts as described in the previous section.
\item \textbf{Ackermann rules:} the rules (RA) and (LA) are applied to the quasi-inequalities in the final step to eliminate the propositional variables.
\end{itemize}

Before we introduce the rules of our algorithm we comment briefly on their presentation.
Let $\phi$ and $\psi$ be two $\Tm_1$-terms. Then $\phi \le \psi$ is an \emph{$\Tm_1$-inequality}.
%ac0912 comments about inequalities
We use the symbols $\amp$ and $\Rightarrow$, interpreted as conjunction and implication, respectively, to
combine $\Tm$-inequalities into \emph{quasi-inequalities}.
To show canonicity of an inequality, we must show that the validity of the inequality is preserved when we
move from interpreting it in an algebra $\A$ to interpreting it in the canonical extension $\A^\delta$.
The algorithm $\mu^*$-ALBA restructures the inequality into an equivalent
quasi-inequality (or sometimes a \emph{set of} quasi-inequalities).

As mentioned in the introduction, when formulas from the extended language
$\Tm^+$ are interpreted in $\A^\delta$, an assignment $V$ will have
$V(\NOM)\subseteq \jty(\A^\delta)$ and $V(\CNOM) \subseteq \mty(\A^\delta)$.
Given an assignment $V$, we call $V'$ a $p$-variant (or, potentially a $\nomj$-variant or
$\cnomm$-variant) of $V$ if $V'$ agrees with $V$ on all elements of $\mathsf{PROP}\cup\mathsf{NOM}\cup\mathsf{CNOM}$
except possibly at $p$ (respectively, at $\nomj$ or $\cnomm$). We denote this by writing $V' \sim_p V$.
From this point on we will use the same $V$ to denote both the assignment
$V : \mathsf{PROP} \cup \NOM \cup\CNOM \to \A^\delta$ and the
%ac1408
assignment
%valuation
$V: \Tm^+ \to \A^\delta$.

\paragraph{Preprocessing}
Given an inequality $\phi \le \psi$,
%WEC: EDITED. This made it seem like one type of distribution happened in the one fml and the other type in the other.
%the first action that takes in the pre-processing stage is perform the following operations on
%the positive generation tree of $\phi$ and the negative generation tree of $\psi$:
consider the positive generation tree of $\phi$ and the negative generation tree of $\psi$ and apply the following transformations exhaustively:
\begin{enumerate}
\item distribute positive occurrences of $\Diamond$ and $\wedge$ over positive occurrences of $\vee$;
\item distribute negative occurrences of $\Box$ and $\vee$ over negative occurrences of $\wedge$;
\item apply the \emph{splitting rules} rules ($\vee$LA) and ($\wedge$RA);
\item apply the \emph{monotone variable elimination rules} ($\top$) and ($\bot$).
\end{enumerate}
%The intuition behind the distribution described above is to get the inequalities in the right shape so that the splitting rules described below
%can be applied.

%The  ($\bot$) and ($\top$) rules are in fact just special cases of the Ackermann rules (RA) and (LA), respectively.
%WEC: The polarity conditions were the wrong way around.
%In ($\bot$) we require that $\alpha$ is positive and $\beta$ is negative in $p$, while for ($\top$) we require that $\gamma$ is negative and $\chi$ is positive in $p$.
In ($\bot$) we require that $\alpha$ is negative and $\beta$ is positive in $p$, while for ($\top$) we require that $\gamma$ is positive and $\chi$ is negative in $p$.
\begin{prooftree}
		
    \AxiomC{$\alpha \vee \beta \le \gamma$}
    \RightLabel{($\vee$LA)}
    \UnaryInfC{$\alpha \le \gamma \quad \beta \le \gamma$}

\AxiomC{$ \alpha \le \beta \wedge \gamma$}
    \RightLabel{($\wedge$RA)}
    \UnaryInfC{$\alpha \le \beta \quad \alpha \le \gamma$}

		\noLine\BinaryInfC{}
    \AxiomC{$ \alpha(p) \le \beta(p)$}
    \RightLabel{($\bot$)}
    \UnaryInfC{$ \alpha(\bot) \leq\beta(\bot)$}

		\AxiomC{$\gamma(p)\le \chi(p)$}
    \RightLabel{($\top$)}
    \UnaryInfC{$\gamma(\top) \leq \chi(\top)$}
        \noLine\BinaryInfC{}
    \noLine\BinaryInfC{}
    %\noLine\QuaternaryInfC{}
\end{prooftree}

\paragraph{First approximation rule}

This is the same as for $\mu$-ALBA, viz.:

\begin{prooftree}
    \AxiomC{$\phi \leq \psi $}
    \RightLabel{(FA)}
    \UnaryInfC{ $\forall \nomj\forall\cnomm[(\nomj \leq \phi\ \&\ \psi\leq\cnomm)\Rightarrow \nomj\leq\cnomm]$}
\end{prooftree}
The rule (FA) takes an inequality from $\Tm_*$ (or $\Tm_1$) and replaces it with a quasi-inequality from $\Tm^+$ ($\Tm_1^+$).
%WEC:Removed
%The quantification over $\NOM$ and $\CNOM$ is interpreted in $\A^\delta$ as for the usual semantics of first-order logic.
The soundness of (FA) follows from the set $\jty(\A^\delta)$
(the interpretation of the nominals of $\mathcal{L}^+$) being join-dense in $\A^\delta$, and from
$\mty(\A^\delta)$ (the interpretation of the co-nominals of $\mathcal{L}^+$) being meet-dense in $\A^\delta$
(see the base case in the proof of Proposition~\ref{prop:admissiblesoundness}).

\paragraph{Residuation rules}
These rules are the same as the corresponding group for $\mu$-ALBA.
%The soundness and invertibility of the residuation rules follows from the fact that $\rightarrow$ is both the left and right residual of $\wedge$,
%and $-$ is both the left and right residual of $\vee$.
The soundness and invertibility of the residuation rules follows from the facts that $\vee$ and $\wedge$ are, respectively, the right residual of $-$ and the left residual of $\rightarrow$.

\begin{prooftree}
		\AxiomC{$ \chi - \psi\leq \phi$}
    \RightLabel{($-$LR)}
    \UnaryInfC{$\chi\leq \psi\vee \phi$}

    \AxiomC{$\phi \leq \chi\rightarrow \psi$}
    \RightLabel{($\rightarrow$RR)}
    \UnaryInfC{$\phi\wedge \chi\leq \psi$}
\noLine\BinaryInfC{}
    \AxiomC{$ \chi \wedge \psi\leq \phi$}
    \RightLabel{($\wedge$LR)}
    \UnaryInfC{$ \chi \leq \psi \rightarrow \phi$}

		\AxiomC{$\phi \leq \chi \vee \psi$}
    \RightLabel{($\vee$RR)}
    \UnaryInfC{$\phi - \chi \leq \psi$}
    \noLine\BinaryInfC{}
    \noLine\BinaryInfC{}
    %\noLine\QuaternaryInfC{}
\end{prooftree}

\paragraph{Adjunction rules} The equivalent validity of the formulas above and below the line in
($\vee$LA) and ($\wedge$RA) follows from the fact that $\vee$ is a left adjoint and $\wedge$ a right adjoint of the diagonal map $a \mapsto (a,a)$.
The rules ($\Diamond$LA) and ($\Box$RA) are justified by $\Diamond$ being the left adjoint of $\Bb$, and
$\Box$ being the right adjoint of $\Db$.
\begin{prooftree}
    \AxiomC{$ \phi \vee \chi \leq \psi$}
    \RightLabel{($\vee$LA)}
    \UnaryInfC{$ \phi \leq \psi \quad  \chi \leq \psi$}
    \AxiomC{$ \psi\leq \phi \wedge \chi $}
    \RightLabel{($\wedge$RA)}
    \UnaryInfC{$\psi \leq \phi \quad \psi \leq \chi$}
    \noLine\BinaryInfC{}

    \AxiomC{$\Diamond\phi \leq \psi $}
    \RightLabel{($\Diamond$LA)}
    \UnaryInfC{ $\phi \leq \blacksquare\psi$}

    \AxiomC{$\phi \leq \Box\psi $}
    \RightLabel{($\Box$RA)}
    \UnaryInfC{ $\Diamondblack\phi \leq \psi$}

    \noLine\BinaryInfC{}

    \noLine\BinaryInfC{}
\end{prooftree}

\paragraph{Approximation rules} These four rules have the requirement that the nominals and co-nominals introduced by them need to be \emph{fresh}, i.e., do not occur in the derivation thus far.
%
%s: $\psi$ must be $\cnomn$-free in ($\Box$Appr), $\psi$ must be
%$\nomi$-free in ($\Diamond$Appr), $\chi$ and $\phi$ must be $\nomj,\cnomn$-free in ($\rightarrow$Appr) and in ($-$Appr).
The justification of all four rules uses the join-primeness of elements of $\jty(\A^\delta)$ and the meet-primeness of elements of $\mty(\A^\delta)$
(see the example of ($\rightarrow$Appr) in Proposition~\ref{prop:admissiblesoundness}).

\begin{prooftree}
		\AxiomC{$\Box\psi\leq \cnomm$}
    \RightLabel{($\Box$Appr)}
    \UnaryInfC{$\exists\cnomn(\Box \cnomn \leq \cnomm \ \&\  \psi\leq\cnomn)$}
    \AxiomC{$\nomj \leq \Diamond\psi$}
    \RightLabel{($\Diamond$Appr)}
    \UnaryInfC{$\exists\nomi(\nomj \leq \Diamond\nomi \ \&\  \nomi \leq \psi)$}
    \noLine\BinaryInfC{}
\end{prooftree}

\begin{prooftree}
    \AxiomC{$\chi\rightarrow \phi \leq \cnomm$}
    \RightLabel{($\rightarrow$Appr)}
    \UnaryInfC{$\exists\nomj \exists \cnomn (\nomj\rightarrow \cnomn \leq \cnomm\ \&\ \nomj\leq\chi\ \&\ \phi\leq\cnomn)$}

    \AxiomC{$\nomi\leq \chi- \phi$}
    \RightLabel{($-$Appr)}
    \UnaryInfC{$\exists\nomj \exists\cnomn (\nomi\leq\nomj -\cnomn \ \&\ \nomj\leq\chi  \&\ \phi\leq\cnomn)$}
    \noLine\BinaryInfC{}
\end{prooftree}
We will usually refer to the above group of approximation rules as the \emph{ordinary} approximation rules in
order to distinguish them from the fixed point binder approximation rules that follow.

\paragraph{Approximation rules for fixed point binders}

Our next two rules are used to extract propositional variables that occur within the scope
of a fixed point binder. Let $\psi$ be an $(n+1)$-ary term function, and suppose that there
exists an order type $\tau$ over $n$ such that $\psi(\overline{x},X)$ is completely
$\bigvee$-preserving in $(\overline{x},X) \in
%ac1408
\mathbf{C}^\tau \times \mathbf{C}$ where $\mathbf{C}$
%\mathbb{C}^\tau \times \mathbb{C}$ where $\mathbb{C}$
is any perfect mu-algebra of the second kind. Then (subject to the conditions listed below)
we will be able to define two rules: ($\mu^\tau$-A-R) and ($\nu^\tau$-A-R).
We refer the reader to the definitions at the end of Section~\ref{sec:language} regarding order types and join-irreducible elements
to assist in the reading of these two rules.
\begin{prooftree}
\AxiomC{$\nomi \leq \mu^* X.\psi(\ophi / \overline{x}, X)$}
\RightLabel{($\mu^{\tau}$-A-R)}
\UnaryInfC{ $\bigiamp_{i=1}^{n}( \exists \nomj^{\tau_i}[\nomi \leq \mu^* X.\psi( {\overline{\nomj}_i}^{\tau}/\overline{x}, X)\ \&\ \nomj^{\tau_i} \leq^{\tau_i} \phi_i])$}
\end{prooftree}

\begin{prooftree}
\AxiomC{$ \nu^* X.\varphi(\opsi/\overline{x}, X)\leq \cnomm$}
\RightLabel{($\nu^{\tau}$-A-R)}
\UnaryInfC{ $\bigiamp_{i=1}^{n}( \exists \cnomn^{\tau_i} [\nu^* X.\varphi({\overline{\cnomn}_i}^{\tau}/\overline{x}, X) \leq \cnomm \ \&\ \psi_i \leq^{\tau_i} \cnomn^{\tau_i}$])}
\end{prooftree}
where
\begin{enumerate}
\item in each rule, the variables $\overline{x}\in \mathsf{PHVAR}$ do not occur in any formula in $\overline{\psi}$ or in $\overline{\phi}$;
\item all propositional variables and free fixed point variables in $\psi(\overline{x}, X)$ and $\phi(\overline{x}, X)$ are among $\overline{x}$ and $X$.
\item in ($\mu^{\tau}$-A-R)
the associated term function of $\psi(\overline{x}, X)$ is completely $\bigvee$-preserving in $(\overline{x}, X) \in \Cc^{\tau} \times \Cc$,
for any perfect modal bi-Heyting algebra $\Cc$; in particular we require that $\psi(\overline{x}, X)$ is positive (negative) in $x_i$ if $\tau_i = 1$ ($\tau_i = \partial$);
\item in ($\nu^{\tau}$-A-R)
the associated term function of $\varphi(\overline{x}, X)$ is completely $\bigwedge$-preserving in $(\overline{x}, X) \in \Cc^{\tau} \times \Cc$, for any perfect modal bi-Heyting algebra $\Cc$; in particular we require that $\phi(\overline{x}, X)$ is positive (negative) in $x_i$ if $\tau_i = 1$ ($\tau_i = \partial$).
\end{enumerate}

%\texttt{CHECK!}\\

Note that the difference between these rules and the more general rules, ($\mu^{\tau}$-A) and ($\nu^{\tau}$-A)
(see~\cite[Section 2.3]{muALBA}),
is the absence of the additional tuple, $\overline{z}$, of place holder variables in $\phi$ and $\psi$. This ensures that, in applications of the rule, the resulting inequalities  $\nomi \leq \mu X.\psi( {\overline{\nomj}_i}^{\tau}/\overline{x}, X)$ and $\nu X.\varphi({\overline{\cnomn}_i}^{\tau}/\overline{x}, X) \leq \cnomm$ are pure --- a fact which will be needed in Section \ref{sec:canon-of-synclasses} to justify the applicability of the Ackermann-rules, when we prove that $\mu^*$-ALBA successfully purifies all restricted and tame inductive mu-inequalities.\\

Note that ($\mu^\tau$-A-R) and ($\nu^\tau$-A-R) are the only rules which can result in the overall
quasi-inequality becoming a set of quasi-inequalities. This happens as a result of the introduction
of the disjunction $\bigiamp_{i=1}^{n}$.
%If there have not yet been any applications of ($\mu^\tau$-A-R) or ($\nu^\tau$-A-R),
Given the way we have set up $\mu^*$-ALBA, when the rule is applied, the inequality $\nomi \le \mu^* X.\psi(\varphi/\bar{x},X)$ will occur in the antecedent of an implication as part of a conjunction of inequalities. Once the disjunction $\bigiamp_{i=1}^{n}$ is introduced, this will be distributed over the conjunctions and the implications,
producing a set of $n$ quasi-inequalities.

\begin{example} Consider the mu-inequality $\mu^* X. (p \vee (\Box \bot - q) \vee \Diamond X)) \le \Diamond \Box (p \vee q)$.  The first approximation rule (FA) gives us the quasi-inequality
$\forall \nomi \forall \cnomm \big[ \nomi \le \mu^* X. (p \vee (\Box \bot - q) \vee \Diamond X)  \amp \Diamond \Box (p \vee q)\le \cnomm\Rightarrow \nomi \le \cnomm\big]$. The first inequality from the antecedent gives us the term function $\psi(x,y,X)=x \vee (\Box\bot -y) \vee \Diamond X$ which is $\bigvee$-preserving in $(x,y,X)$ for the order type
$\tau=(1,\partial)$. That is, $\psi(x,y,X)$ is order-preserving as a map from $\Cc^\tau \times \Cc$.
A tuple $\overline{\nomj}_1^\tau$ has the form $(\nomj,\top)$ and a tuple $\overline{\nomj}_2^\tau$ has the form $(\bot,\cnomn)$.
Let $\overline{\phi}=(p,q)$. The rule ($\mu^\tau$-A-R) gives us the following:
\begin{prooftree}
\AxiomC{$\nomi \leq \mu^* X.\psi(\overline{\phi} /\overline{x}, X)$}
%\RightLabel{($\mu^{\tau}$-A-R)}
\UnaryInfC{ $\Big(\exists \nomj^{1} \big[ \nomi \leq \mu^* X.\psi(\overline{\nomj}_1^\tau /\overline{x}, X) \;  \amp  \; (\nomj^{1} \le^{1} p) \big] \Big)
\; \; \iamp \; \; \Big( \exists \nomj^{\partial} \big[\nomi \le \mu^*X.( \overline{\nomj}_2^\tau/\overline{x},X) \; \amp \; (\nomj^\partial \le^\partial q)\big] \Big)
$}
\end{prooftree}
This can be further simplified and written as:
\begin{prooftree}
\AxiomC{$\nomi \leq \mu^* X.\psi(\overline{\phi} /\overline{x}, X)$}
%\RightLabel{($\mu^{\tau}$-A-R)}
\UnaryInfC{ $\Big(\exists \nomj \big[ \nomi \leq \mu^* X.(\nomj \vee (\Box \bot - \top) \vee X) \; \amp \; (\nomj \le p) \big] \Big)
\; \; \iamp \; \; \Big( \exists \cnomn \big[\nomi \le \mu^*X.(\bot \vee (\Box\bot - \cnomn) \vee X) \; \amp \; (\cnomn \geq q)\big] \Big)
$}
\end{prooftree}
\end{example}

%ac1403
\paragraph{Ackermann rules}
The Ackermann rules are used for the crucial task of eliminating propositional variables from
quasi-inequalities. Before we define these rules we need to introduce some terminology and definitions.

Let $\phi \in \mathcal{L}^+_1$ or $\phi \in \mathcal{L}^+_*$. We say that $\phi$ is \emph{positive} (\emph{negative}) in a variable
$p$ if in the generation tree $+\phi$ all $p$-nodes are signed $+$($-$). An inequality $\phi \le \psi$ is \emph{positive} (\emph{negative})
in a variable $p$ if $\phi$ is negative (positive) in $p$ and $\psi$ is positive (negative) in $p$.

\begin{definition}\label{def:synopensynclosed}
The \emph{syntactically open formulas} $\phi$ and \emph{syntactically closed formulas} $\psi$ are defined by simultaneous mutual recursion as follows:
\begin{eqnarray*}
\phi ::= \bot \mid \top \mid p \mid \cnomm \mid \phi_1 \wedge \phi_2 \mid \phi_1 \vee \phi_2 \mid \psi \rightarrow \phi \mid \phi - \psi \mid \Box \phi \mid \Diamond \phi \mid \blacksquare \phi
\mid \nu^{*} X. \phi\\
\psi ::= \bot \mid \top \mid p \mid \nomi \mid \psi_1 \wedge \psi_2 \mid \psi_1 \vee \psi_2 \mid \phi \rightarrow \psi \mid \psi - \phi \mid \Box \psi \mid \Diamond \psi \mid \Diamondblack \psi
\mid \mu^{*} X. \psi
\end{eqnarray*}
where $p \in \mathsf{PROP}$, $\nomi \in \mathsf{NOM}$, and $\cnomm \in \mathsf{CNOM}$.

The \emph{syntactically almost open formulas} and \emph{syntactically almost closed formulas} are defined by adding, respectively,  $\mu^{*} X. \phi$ and  $\nu^{*} X. \psi$ to the recursions above.

Informally, an $\Tm^+_*$-term is \emph{syntactically almost open} if, in it,
all occurrences of nominals and $\Db$ are negative, while all occurrences of co-nominals and $\blacksquare$ are positive. If, in addition, all occurrences of $\mu^*$ are negative and all occurrences of $\nu^*$ positive, the term is syntactically open. Similarly,  an $\Tm^+_*$-term is syntactically almost closed if, in it,
all occurrences of nominals and $\Db$  are positive, while all occurrences of co-nominals and  $\blacksquare$ are negative. If, in addition, all occurrences of $\mu^*$ are positive and all occurrences of $\nu^*$ are negative, the term is syntactically closed.
\end{definition}

Given these definitions we are now able to define our restricted versions of the Ackermann rules:

\begin{prooftree}
    \AxiomC{$\exists p[\bigamp_{i = 1}^{n} \alpha_i \leq p \: \amp \: \bigamp_{j = 1}^{m} \beta_j(p) \leq \gamma_j(p)]$}
    \RightLabel{(RA)}
    \UnaryInfC{$\bigamp_{j = 1}^{m} \beta_j(\bigvee_{i = 1}^{n} \alpha_i / p) \leq \gamma_j(\bigvee_{i = 1}^{n} \alpha_i / p) $}
\end{prooftree}
subject to the restrictions that the $\alpha_i$ are $p$-free and syntactically closed, the $\beta_j$ are positive in $p$ and syntactically closed, while the $\gamma_j$ are negative in $p$ and syntactically open.

\begin{prooftree}
    \AxiomC{$\exists p[\bigamp_{i = 1}^{n} p \leq \alpha_i \: \amp \: \bigamp_{j = 1}^{m} \gamma_j(p) \leq \beta_j(p) ]$}
    \RightLabel{(LA)}
    \UnaryInfC{$\bigamp_{j = 1}^{m} \gamma_j(\bigwedge_{i = 1}^{n} \alpha_i / p) \leq \beta_j(\bigwedge_{i = 1}^{n} \alpha_i / p) $}
\end{prooftree}
subject to the restrictions that the $\alpha_i$ are $p$-free and syntactically open, the $\beta_j$ are positive in $p$ and syntactically open, while the $\gamma_j$ are negative in $p$ and syntactically closed.

A \emph{tame run} of $\mu^*$-ALBA is one during which there are no applications of either ($\mu^\tau$-A-R) or ($\nu^\tau$-A-R).
%This is because there are no occurrences of fixed point binders on critical branches.
By contrast, a \emph{proper run} of $\mu^*$-ALBA is one during which \emph{all} occurrences of fixed point binders are handled
by ($\mu^\tau$-A-R) and ($\nu^\tau$-A-R).
%We say that a tame/proper run of the algorithm $\mu^*$-ALBA \emph{succeeds} if all propositional variables are eliminated from the input inequality
%and we denote the resulting quasi-inequality by $\texttt{pure}(\phi^*\le \psi^*)$.
We say that a run of the algorithm $\mu^*$-ALBA \emph{succeeds} if all propositional variables are eliminated from the input inequality, $\phi \leq \psi$, and we denote the resulting set of pure quasi-inequalities by $\texttt{pure}(\phi^*\le \psi^*)$. An inequality on which some run of $\mu^*$-ALBA succeeds is called a \emph{$\mu^*$-ALBA inequality}.
%It is called a \emph{tame} (resp., \emph{proper}) \emph{$\mu^*$-ALBA inequality} if a tame (resp., proper) run succeeds on it.

\section{Syntactic conditions for meet and join preservation: inner formulas}\label{sec:innerformulas}

In the formulation of the approximation rules ($\mu^{\tau}$-A-R) and ($\nu^{\tau}$-A-R) we required the term functions $\psi(\overline{x},X)$ and $\phi(\overline{x},X)$ to be, respectively, completely $\bigvee$ and $\bigwedge$-preserving as maps from $\A^{\tau} \times \A$ to $\A$. The inner formulas (introduced in \cite[Section 4]{muALBA}) are a syntactically specified classes of formulas, the term functions of which satisfy these properties. Being an inner formula is thus an effectively checkable sufficient condition for the applicability of the  rules ($\mu^{\tau}$-A-R) and ($\nu^{\tau}$-A-R).
%Here is the inductive definition, which makes use of \emph{placeholder variables} from a set $\mathsf{PHVAR}$:

\begin{definition}\label{IF:Definition}
Let $\oy, \oz \subseteq \mathsf{PHVAR}$ and $\oX \subseteq \mathsf{FVAR}$ be tuples of variables which are pairwise different in the union of the their underlying sets. Let $\tau$ be an order-type on $\ox =  \oy \oplus \oX$. The {\em $\tau$-$\Box$ and $\tau$-$\Diamond$ $(\ox, \oz)$-inner formulas}  (\emph{$(\ox, \oz)$-\ifBox{\tau}} and \emph{$(\ox, \oz)$-\ifDia{\tau}}), the free variables of which are contained in $(\ox, \oz)$, are given by the following simultaneous recursion (for the sake of readability, the parameters $\ox$ and $\oz$ are omitted):

\begin{center}
\begin{tabular}{lcccccccccccccc}
\ifBox{\tau}$\ni \phi$ &$\!\!::=\!\!$ &$x_i$ &$\!\!\mid\!\!$ &$\Box \phi$  &$\!\!|\!\!$ &$\phi_1 \wedge \phi_2$ &$\!\!|\!\!$ &$\nu^* Y. \phi' $ &$\!\!|\!\!$ &$\pi \rightarrow \phi$ &$\!\!|\!\!$ &$\pi \vee \phi$ &$\!\!|\!\!$ &$\psi^c \rightarrow \pi$\\
\ifDia{\tau}$\ni \psi$ &$\!\!::=\!\!$ &$x_i$ &$\!\!\mid\!\!$ &$\Diamond \psi$  &$\!\!|\!\!$ &$\psi_1 \vee \psi_2$ &$\!\!|\!\!$ &$\mu^* Y. \psi' $ &$\!\!|\!\!$ &$\psi - \pi$ &$\!\!|\!\!$ &$\pi \wedge \psi$ &$\!\!|\!\!$ &$\pi - \phi^c$\\
%%
%\ifRhd$\ni \xi$ &$\!\!::=\!\!$ & &$\!\!\mid\!\!$ &$\Box \xi$  &$\!\!|\!\!$ &$\xi_1 \wedge \xi_2$ &$\!\!|\!\!$ & &$\!\!|\!\!$ &$\pi %\rightarrow \xi$ &$\!\!|\!\!$ &$\pi \vee \xi$ &$\!\!|\!\!$ &$\psi \rightarrow \pi$\\
%%
%\ifLhd$\ni \chi$ &$\!\!::=\!\!$ & &$\!\!\mid\!\!$ &$\Diamond \chi$  &$\!\!|\!\!$ &$\chi_1 \vee \chi_2$ &$\!\!|\!\!$ & &$\!\!|\!\!$ %&$\chi - \pi$ &$\!\!|\!\!$ &$\pi \wedge \chi$ &$\!\!|\!\!$ &$\pi - \phi$\\
%%
\end{tabular}
\end{center}
\noindent where
\begin{enumerate}
\item $\tau_i = 1$ in the base of the recursion,\label{IF:Base}
\item $\pi$ is $\pi(\oz)\in \mathcal{L}_*$ (specifically, $\pi(\oz)$ contains none of the variables in $\ox$ or $\oX$),\label{IF:Pi}
\item $\phi' = \phi'(\oy \oplus \oX',\oz)$ and $\psi' = \psi'(\oy \oplus \oX',\oz)$ are \ifBox{\tau'} and \ifDia{\tau'}, respectively, with $\oX' = \oX \oplus Y$ and $\tau' = \tau \oplus 1$,\label{IF:FixPoint}
\item $\psi^c\in (\ox, \oz)$-\ifDia{\tau^{\partial}} and $\phi^c\in (\ox, \oz)$-\ifBox{\tau^{\partial}}.\label{IF:Reverse}
\item All other formulas have their free variables among $(\ox, \oz)$.
\end{enumerate}
\end{definition}

\noindent The key fact about $(\ox, \oz)$-\ifBox{\tau} and $(\ox, \oz)$-\ifDia{\tau} formulas is the following:

\begin{lemma}[{\cite[Lemma 4.3]{muALBA}}]\label{IF:Are:Adjoint}
For any perfect modal bi-Heyting algebra $\A$, the term function associated with any \ifBox{\tau} formula $\phi (\ox, \oz)$ (resp., \ifDia{\tau} formula $\psi (\ox, \oz)$) is completely meet-preserving (resp., join-preserving) as a map $\A^{\tau} \rightarrow \A$, fixing the variables $\oz$.

In particular, if the $\oga$ are constant $\mathcal{L}_*^+$ sentences, then the term function associated with $\phi (\ox, \oga/ \oz)$ (resp., $\psi (\ox, \oga/ \oz)$) is completely meet-preserving (resp., join-preserving) as a map $\A^{\tau} \rightarrow \A$.
\end{lemma}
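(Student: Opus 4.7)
The plan is to prove the two statements simultaneously by structural induction on the recursive definition of $\ifBox{\tau}$ and $\ifDia{\tau}$ formulas, showing that the term function of an $\ifBox{\tau}$ formula $\phi(\ox, \oz)$ is completely meet-preserving in $\ox \in \A^{\tau}$ for each fixed assignment to $\oz$, and dually that the term function of an $\ifDia{\tau}$ formula is completely join-preserving. The ``in particular'' statement will follow at once, since once the parameters in $\oz$ are instantiated to constants, the term function is precisely the specialization to those values. In the base case $\phi = x_i$ (resp.\ $\psi = x_i$) with $\tau_i = 1$, the term function is the $i$-th projection $\A^{\tau} \to \A$, which is trivially completely meet- and join-preserving.

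For the straightforward inductive cases I would appeal directly to the order-theoretic properties of perfect modal bi-Heyting algebras recorded in Section~\ref{sec:language}: $\Box$ preserves arbitrary meets and $\Diamond$ arbitrary joins by definition of a perfect modal bi-Heyting algebra; $\wedge$ and $\vee$ are completely meet- and join-preserving in each coordinate; complete distributivity gives $\pi \vee \bigwedge S = \bigwedge_{s \in S}(\pi \vee s)$ and $\pi \wedge \bigvee S = \bigvee_{s \in S}(\pi \wedge s)$; and the residuation identities $a \to \bigwedge S = \bigwedge_{s \in S}(a \to s)$ and $\bigvee S - a = \bigvee_{s \in S}(s - a)$ take care of $\pi \to \phi$, $\psi - \pi$ and their kin. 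Each of these combines with the inductive hypothesis on the subformula to give the required preservation property.

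The clauses $\psi^c \to \pi$ and $\pi - \phi^c$ are where one invokes the dual clause of the simultaneous induction. By hypothesis $\psi^c$ is an $(\ox,\oz)$-\ifDia{\tau^{\partial}} formula, so its term function is completely $\bigvee$-preserving as a map $\A^{\tau^{\partial}} \to \A$; equivalently, viewing it on $\A^{\tau}$, it sends arbitrary meets in $\A^{\tau}$ to joins in $\A$. Composing with the contravariant $(\cdot) \to \pi$ and applying $\bigvee S \to \pi = \bigwedge_{s \in S}(s \to \pi)$, these joins are converted back into meets, so $\psi^c \to \pi$ is completely meet-preserving as a map $\A^{\tau} \to \A$. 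The case $\pi - \phi^c$ is order-dual, using that $\phi^c \in \ifBox{\tau^{\partial}}$ sends joins in $\A^{\tau}$ to meets in $\A$ and that $\pi - \bigwedge S = \bigvee_{s \in S}(\pi - s)$.

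The main obstacle is the fixed point case, which I expect to be the most delicate. For $\nu^* Y.\phi'$ with $\phi' \in \ifBox{\tau \oplus 1}$, the inductive hypothesis says that the term function of $\phi'$ is completely meet-preserving as a map $\A^{\tau \oplus 1} \to \A$. Since $\A$ is perfect and hence a complete lattice, by the Knaster--Tarski theorem (equivalently, Kleene's transfinite iteration) one has $\nu^* Y.\phi'(y) = \bigwedge_{\alpha} \phi'_{\alpha}(\top, y)$, where $\phi'_\alpha(\top, y)$ are the ordinal approximants defined in Section~\ref{sec:language}. A straightforward transfinite induction on $\alpha$ then shows that each $\phi'_{\alpha}(\top, \cdot)$ is completely meet-preserving in $y$: the successor step uses that $\phi'$ is meet-preserving in all its arguments jointly (so the composition of meet-preservers is a meet-preserver), and the limit step is a double meet reindexing. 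Finally, a pointwise meet of completely meet-preserving maps is itself completely meet-preserving because meets commute with meets, so $\nu^* Y.\phi'$ is completely meet-preserving in $y \in \A^{\tau}$ as required. The case $\mu^* Y.\psi'$ is handled order-dually, using the ascending approximants $\psi'^{\alpha}(\bot, \cdot)$.
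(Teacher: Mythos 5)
The paper does not prove this lemma --- it is imported verbatim from \cite[Lemma 4.3]{muALBA} --- so there is no in-house proof to compare against. Your simultaneous structural induction is nonetheless exactly the argument one expects (and the one the cited source uses): the base case and the clauses for $\Box$, $\Diamond$, $\wedge$, $\vee$, $\pi \rightarrow \phi$, $\pi \vee \phi$, $\psi - \pi$ and $\pi \wedge \psi$ follow from the identities the paper records for perfect (modal) bi-Heyting algebras, and your treatment of the sign-reversing clauses $\psi^c \rightarrow \pi$ and $\pi - \phi^c$ --- feeding the order-dual induction hypothesis into $\bigvee S \rightarrow a = \bigwedge_{s \in S}(s \rightarrow a)$ and $a - \bigwedge S = \bigvee_{s \in S}(a - s)$ --- is correct.

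Two remarks on the fixed-point case, which is indeed the only delicate one. First, your identity $\nu^{*} Y.\phi'(y) = \bigwedge_{\alpha}\phi'_{\alpha}(\top,y)$ tacitly identifies $\nu^{*}$ with the genuine greatest fixed point $\nu$ on the complete lattice: by the paper's definition, $\nu^{*}X.t$ is the join of the post-fixed points lying in a designated subalgebra $A$, and the Kleene/Knaster--Tarski characterization is only available when $A$ is the whole carrier of the perfect algebra $\A$. That is the intended reading of the lemma (in \cite{muALBA} the binders are the ordinary $\mu,\nu$), but you should state the identification explicitly; on a canonical extension $\Cc^{\delta}$ with $\nu^{*}$ relativized to $\Cc$ the iteration formula fails, and the corresponding preservation statement becomes the much more delicate Lemma~\ref{lem:2.1equiv}, which requires extra hypotheses. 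Second, granting $\nu^{*}=\nu$, your transfinite induction is correct but reproves the order dual of Lemma~\ref{lem:2.1orig}, which the paper already quotes from \cite{muALBA}; alternatively the step is immediate from Knaster--Tarski: if $z=\bigwedge_{i}\nu_{i}$ with $\nu_{i}=\nu Y.\phi'(a_{i},Y)=\phi'(a_{i},\nu_{i})$, then $z\leq\bigwedge_{i}\phi'(a_{i},\nu_{i})=\phi'(\bigwedge_{i}a_{i},z)$ by joint meet-preservation, so $z$ is a post-fixed point of $\phi'(\bigwedge_{i}a_{i},-)$ and hence $z\leq \nu Y.\phi'(\bigwedge_{i}a_{i},Y)$, the reverse inequality being monotonicity.
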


For our purposes then, the most important consequence of this lemma is the fact that ($\nu^{\tau}$-A-R) and ($\mu^{\tau}$-A-R) are respectively applicable to formulas of the form $\psi (\ophi/\ox, \oga/ \oz)$ and $\phi (\opsi / \ox, \oga/ \oz)$,  where $\psi (\ox, \oz)$ is \ifBox{\tau}, $\psi (\ox, \oz)$ is \ifDia{\tau} and the $\oga$ are constant $\mathcal{L}_*^+$ sentences. In particular, ($\nu^{\tau}$-A-R) and ($\mu^{\tau}$-A-R) are applicable to formulas $\psi (\ophi/\ox)$ and $\phi (\opsi / \ox)$,  where $\psi (\ox, \oz)$ is \ifBox{\tau} and $\psi (\ox, \oz)$ is \ifDia{\tau} with $\oz$ the empty tuple.

\section{Soundness of the fixed point approximation rules}\label{sec:sound-fp-approx}

Before we prove the soundness of the Approximation Rules ($\mu^\tau$-A-R) and ($\nu^\tau$-A-R), we require some lemmas regarding
preservation properties of operations.

\begin{lemma}\label{lem:heart} Let\, $\mathbf{L}$ be a complete lattice. If $f : \mathbf{L}^m \times \mathbf{L} \to \mathbf{L}$
is completely join-preserving, then the function $f(x_1,\ldots,x_m,\bot) : \mathbf{L}^m \to \mathbf{L}$ is completely
join-preserving.
\end{lemma}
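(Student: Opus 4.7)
The plan is to reduce the claim directly to the hypothesis by exhibiting joins in $\mathbf{L}^m$ as joins in $\mathbf{L}^{m+1}$ of tuples whose last coordinate is $\bot$. Write $g : \mathbf{L}^m \to \mathbf{L}$ for the map $g(x_1,\ldots,x_m) = f(x_1,\ldots,x_m,\bot)$.

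Fix an arbitrary family $\{\overline{a}^s\}_{s \in S}$ in $\mathbf{L}^m$, where $\overline{a}^s = (a_1^s, \ldots, a_m^s)$. We want to show that
\[
g\Bigl(\bigvee_{s \in S} \overline{a}^s\Bigr) \;=\; \bigvee_{s \in S} g(\overline{a}^s).
\]
The first step is to observe that, since joins in the product lattice $\mathbf{L}^{m+1}$ are computed coordinatewise and $\bigvee_{s \in S} \bot = \bot$ (vacuously when $S = \varnothing$, trivially otherwise), we have the identity
\[
\bigvee_{s \in S} (\overline{a}^s, \bot) \;=\; \Bigl(\bigvee_{s \in S} \overline{a}^s,\; \bot\Bigr)
\]
in $\mathbf{L}^{m+1}$. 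The second step is then simply to apply $f$ on both sides and invoke complete join preservation of $f$ on the product lattice to obtain
\[
g\Bigl(\bigvee_{s \in S} \overline{a}^s\Bigr) \;=\; f\Bigl(\bigvee_{s \in S} \overline{a}^s,\; \bot\Bigr) \;=\; f\Bigl(\bigvee_{s \in S} (\overline{a}^s, \bot)\Bigr) \;=\; \bigvee_{s \in S} f(\overline{a}^s, \bot) \;=\; \bigvee_{s \in S} g(\overline{a}^s),
\]
which is the desired conclusion.

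There is no substantial obstacle here; the argument is essentially a bookkeeping observation about how joins in a product lattice interact with constant coordinates. The only point deserving care is the empty-family case $S = \varnothing$: one must check that the empty join in $\mathbf{L}^{m+1}$ really does coincide with the tuple $(\bot,\ldots,\bot,\bot)$, which matches the empty join in $\mathbf{L}^m$ paired with $\bot$; this ensures that $g$ also preserves the empty join, so that $g$ is completely join-preserving in the full sense and not merely preserving nonempty joins.
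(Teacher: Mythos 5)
Your proof is correct and follows essentially the same route as the paper's: form the family $\{(\overline{a}^s,\bot)\}_{s\in S}$ in $\mathbf{L}^{m+1}$, note that its join is $(\bigvee_s \overline{a}^s,\bot)$, and apply complete join preservation of $f$. Your explicit remark about the empty family is also made in the paper (as a comment following the proof, explaining why the constant must be $\bot$ rather than an arbitrary $a \in L$).
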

\begin{proof}
Let $S \subseteq L^m$. We want to show that $f(\TJ S,\bot) = \bigvee\{\, f(s,\bot)\mid s \in S\,\}$. Let $S'=\{\,(s,\bot) \mid s \in S\,\} \subseteq \mathbf{L} \times \mathbf{L}^m$. Then
$\bigvee S'=(\bigvee S,\bot)$ and
$f(\TJ S')=\bigvee\{\, f(s') \mid s' \in S' \,\}=\bigvee \{\, f(s,\bot) \mid s \in S\,\}$.
\end{proof}
We note that this result would also hold if we replaced $\bot$ with any $a \in L$, so long as $S$ is non-empty. To accommodate the case
that $S=\emptyset$, we must have $a = \bot$.

\begin{lemma}\label{lem:2.1equiv} Let $\A$ be a mu-algebra of the second kind and let $\tau$ be an order type over $n$.
Let  $\tau'$ be the order type over $n+1$ defined by $\tau'=\tau \oplus \{1\}$.
%Suppose $f \colon \A^{n+1} \to \A$ is a composition of operations of $\A$ such that
Suppose $f \colon \A^{n+1} \to \A$ is an $\mathcal{L}_{*}$ term function such that
$f^{\A^\delta} : (\A^\delta)^{\tau'} \to \A^\delta$ is completely $\bigvee$-preserving.
By the assumption that $\A$ is of the second kind,
$\mu_2 x. f(a_1,a_2,\ldots,a_n,x)$ exists in $\A$ for all $a_1,a_2, \ldots a_n \in A$.

Let $S \subseteq (\A^\delta)^{\tau}$ such that $\bigvee S \in \A^{\tau}$. Then
$\LFPstar x. f^{\A^\delta}(\TJ S,x) = \bigvee \{\, \LFPstar x.f^{\A^\delta}(s,x)\mid s \in S\,\}$.
\end{lemma}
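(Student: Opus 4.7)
The plan is to establish the two inequalities $L \leq R$ and $R \leq L$ separately, where $L := \mu^{*} x. f^{\A^\delta}(\bigvee S, x)$ and $R := \bigvee\{\mu^{*} x. f^{\A^\delta}(s, x) \mid s \in S\}$. The direction $R \leq L$ is straightforward: complete $\bigvee$-preservation of $f^{\A^\delta}$ on $(\A^\delta)^{\tau'}$ forces monotonicity in the $\tau$-order, so $s \leq^{\tau} \bigvee S$ implies that any $a \in A$ with $f^{\A^\delta}(\bigvee S, a) \leq a$ is also a pre-fixed point of $f^{\A^\delta}(s, \cdot)$; hence $\mu^{*} x. f^{\A^\delta}(s, x) \leq L$ for every $s \in S$, and taking the join gives $R \leq L$.

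The harder direction proceeds by first reducing $L$ to a transfinite join of iterates. Because $\bigvee S \in A^{\tau}$ and $f^{\A^\delta}$ extends $f^{\A}$ on $A$, the iterates $u_\alpha := (f^{\A^\delta})^{\alpha}(\bigvee S, \bot)$ all lie in $A$ and, by the second-kind hypothesis, stabilise at a pre-fixed point of $f(\bigvee S, \cdot)$ in $\A$; repeating the argument of Lemma~\ref{lem:type2=>type1} then yields $L = \bigvee_{\alpha} u_\alpha$. The heart of the proof is a transfinite induction establishing
\[
u_\alpha \;=\; \bigvee_{s \in S} (f^{\A^\delta})^{\alpha}(s, \bot) \qquad \text{for every ordinal } \alpha.
\]
The base and limit cases reduce to routine rearrangements of iterated joins. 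The successor step is where the hypothesis of \emph{joint} complete $\bigvee$-preservation of $f^{\A^\delta}$ on the product $(\A^\delta)^{\tau'}$ (as opposed to coordinatewise preservation) becomes essential: the family $\{(s, (f^{\A^\delta})^{\alpha}(s, \bot)) \mid s \in S\} \subseteq (\A^\delta)^{\tau'}$ has $\tau'$-join equal to $(\bigvee S, \bigvee_{s} (f^{\A^\delta})^{\alpha}(s, \bot))$, so applying $f^{\A^\delta}$ to this join and using complete $\bigvee$-preservation on the product gives $f^{\A^\delta}(\bigvee S, \bigvee_{s}(f^{\A^\delta})^{\alpha}(s, \bot)) = \bigvee_{s} (f^{\A^\delta})^{\alpha+1}(s, \bot)$, which combined with the induction hypothesis produces $u_{\alpha+1} = \bigvee_{s \in S}(f^{\A^\delta})^{\alpha+1}(s, \bot)$.

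To conclude, for each $s \in S$ set $T_s := \bigvee_{\alpha} (f^{\A^\delta})^{\alpha}(s, \bot)$; a straightforward transfinite induction shows that $T_s \leq a$ whenever $a \in A$ satisfies $f^{\A^\delta}(s, a) \leq a$, and hence $T_s \leq \mu^{*} x. f^{\A^\delta}(s, x)$. Combining everything,
\[
L \;=\; \bigvee_{\alpha} u_\alpha \;=\; \bigvee_{\alpha} \bigvee_{s \in S} (f^{\A^\delta})^{\alpha}(s, \bot) \;=\; \bigvee_{s \in S} T_s \;\leq\; \bigvee_{s \in S} \mu^{*} x. f^{\A^\delta}(s, x) \;=\; R.
\]
The main obstacle is the asymmetry between $\mu^{*}$, which meets only over pre-fixed points drawn from $A$, and the transfinite iterates of $f^{\A^\delta}$, which in general live in $A^\delta \setminus A$. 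This is precisely why the twin hypotheses $\bigvee S \in A^{\tau}$ and that $\A$ is of the second kind are used, to keep the left-hand iterates inside $A$ so that the reduction $L = \bigvee_{\alpha} u_\alpha$ is legitimate; on the right the slack $T_s \leq \mu^{*} x. f^{\A^\delta}(s, x)$ goes in the convenient direction and requires no additional hypothesis.
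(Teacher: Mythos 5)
Your proposal is correct and follows essentially the same route as the paper's proof: reduce $\mu^{*}x.f^{\A^\delta}(\bigvee S,x)$ to the transfinite iteration $\bigvee_{\alpha}(f^{\A^\delta})^{\alpha}(\bigvee S,\bot)$ via the second-kind hypothesis and the fact that $\bigvee S\in A^{\tau}$, then distribute the iterates over $S$ by transfinite induction, with the successor step resting on joint complete join-preservation on the product. Your explicit two-inequality bookkeeping at the end (the bound $T_s\leq\mu^{*}x.f^{\A^\delta}(s,x)$ together with the easy monotonicity direction $R\leq L$) merely spells out the step the paper leaves implicit when it says the induction ``will be sufficient to prove the overall result.''
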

\proof
%ac1408
We have the following sequence of equalities:
%We will write $f^{\A^\delta}$ for $(f)^{\A^\delta}$. That is, $f^{\A^\delta}$ denotes $f$ interpreted
%as a term function in the algebra $\A^\delta$. Now
\begin{align}
\mu^* x.f^{\A^\delta}(\TJ S,x) & = \bigwedge \{\, a \in A \mid f^{\A^\delta}(\TJ S,a) \le a \,\} \\
& = \bigwedge \{\,a \in A \mid f^{\A}(\TJ S,a) \le a \,\} \\
& = \mu x. f^{\A}(\TJ S,x)\\
& = \LFPtwo x.f^{\A}(\TJ S,x) \\
& = \bigvee_{\alpha \ge 0} (f^{\A})^\alpha(\TJ S,\bot) \\
& = \bigvee_{\alpha \ge 0} (f^{\A^\delta})^\alpha(\TJ S,\bot).
\end{align}
The equivalence of (1) and (2), as well as (5) and (6), follows from the fact that all of the arguments of $f^{\A^\delta}$
are in $\A$. The equivalence of (2) and (3) is the definition of how $\mu x.\phi(x)$ is interpreted,
and the equivalence of (3) and (4) follows from the fact that $\A$ is a mu-algebra of the second kind.

By induction on $\alpha$ we will show that $(f^{\A^\delta})^\alpha(\TJ S,\bot) = \bigvee\{\, (f^{\A^\delta})^\alpha(s,\bot) \mid s \in S \,\}$
and this will be sufficient to prove the overall result.
\begin{itemize}
\item Case $\alpha=0$: \quad $(f^{\A^\delta})^0(\TJ S,\bot) =\bot$.

\item Case $\alpha=1$: \quad $(f^{\A^\delta})^1(\TJ S,\bot) = f^{\A^\delta}( \TJ S,\bot) =
\bigvee\{\, f^{\A^\delta}(s,\bot) \mid s \in S\,\}$. The second equivalence follows from Lemma~\ref{lem:heart}.

\item Successor ordinals: $(f^{\A^\delta})^{\alpha +1}( \TJ S,\bot) = f^{\A^\delta}\Big(\TJ S, (f^{\A^\delta})^\alpha(\TJ S,\bot) \Big)
= f^{\A^\delta} \Big( \TJ S, \bigvee \{\, (f^{\A^\delta})^\alpha(s,\bot) \mid s \in S \,\}\Big)$ by the inductive hypothesis. Since $f^{\A^\delta}$ is completely join-preserving we have
$$ f^{\A^\delta} \Big( \TJ S, \bigvee_{s \in S} (f^{\A^\delta})^\alpha(s,\bot) \Big)= \bigvee_{s \in S} f^{\A^\delta} \Big( s, (f^{\A^\delta})^\alpha(s,\bot) \Big)=
\bigvee_{s \in S} (f^{\A^\delta})^{\alpha+1}(s,\bot) .$$

\item Limit ordinals:
\begin{align*}
\qquad\qquad\qquad(f^{\A^\delta})^\gamma(\TJ S,\bot) & = \bigvee_{\beta < \gamma}(f^{\A^\delta})^\beta(\TJ S,\bot) = \bigvee_{\beta < \gamma} \bigvee_{s \in S} (f^{\A^\delta})^\beta (s,\bot) \qquad \text{(by IH)}\\
\qquad\qquad\qquad& = \bigvee_{s \in S} \bigvee_{\beta < \gamma}  (f^{\A^\delta})^\beta (s,\bot) = \bigvee_{s \in S} (f^{\A^\delta})^\gamma(s,\bot).
\qquad \qquad\qquad \qquad\qquad\qquad \qed
\end{align*}
\end{itemize}

Before demonstrating that the Approximation Rule is sound we should point out that this rule is only ever
applied to a \emph{quasi-inequality} that is the result of an application of the First Approximation Rule (FA).
That is, ($\mu^\tau$-A-R) and ($\nu^\tau$-A-R) are each applied to an inequality which forms \emph{part of}
the antecedent of an implication.
When demonstrating the soundness of the rule ($\mu^\tau$-A-R) it is therefore sufficient
to show that the inequality above the line and the inequality below the line are valid under
%ac1408
assignments
%valuations
which agree everywhere except at some nominal which does not occur in the consequent of the quasi-inequality.

\begin{prop} {\upshape\textbf{(Soundness of ($\mu^\tau$-A-R))}}\label{prop:approxsound}
Let
%ac1408
$\mathbf{C}$
%$\mathbb{C}$
be a
%perfect
mu-algebra of the second kind.
Let $\psi(\overline{x},X)$ and $\overline{\phi}$ be terms in $\mathcal{L}_*$, with
$\psi(\overline{x},X)$ completely $\bigvee$-preserving in $(\overline{x},X) \in (\Cc^\delta)^\tau \times \Cc^\delta$ for an order type
$\tau$. Let $V$ be an admissible assignment on $\Cc^\delta$.

Then $\Cc^\delta, V \models \nomi \le \mu^* X.\psi(\overline{\phi}/\overline{x},X)$ if and only if there exists $i$ ($1 \le i \le n$)
and a $\nomj^{\tau_i}$-variant $V'$ of $V$ such that
$$\Cc^\delta,V' \models \nomi \le \mu^* X. \psi(\overline{\nomj_i}^\tau/\overline{x},X) \quad \text{and} \quad
\Cc^\delta,V' \models \nomj^{\tau_i}\le^{\tau_i} \varphi_i.$$
\end{prop}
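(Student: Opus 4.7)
The statement is a biconditional, so my plan is to argue each direction separately. The easier $(\Leftarrow)$ direction will follow from the monotonicity of $\psi$ in the $\le^\tau$-order (an immediate consequence of complete $\bigvee$-preservation), while the $(\Rightarrow)$ direction will decompose $V(\overline{\phi})$ into join-irreducibles of the product lattice $((\Cc^\delta)^\tau,\le^\tau)$, distribute $\mu^*$ over the resulting join via Lemma~\ref{lem:2.1equiv}, and extract the required witness using the complete join-primeness of $V(\nomi)$.

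For the $(\Leftarrow)$ direction, suppose $V'$ is a $\nomj^{\tau_i}$-variant of $V$ for which both antecedents hold. Freshness of $\nomj^{\tau_i}$ ensures $V'(\overline{\phi}) = V(\overline{\phi})$. Componentwise in the $\le^\tau$-order we have $\overline{\nomj_i}^\tau \le^\tau V'(\overline{\phi})$: the $i$-th slot because $V' \models \nomj^{\tau_i}\le^{\tau_i}\varphi_i$, and the other slots because $\bot^{\tau_k}$ is the $\le^{\tau_k}$-least element. Since $\psi$ is monotone with respect to $\le^\tau$, we conclude $\psi(\overline{\nomj_i}^\tau,X) \le \psi(V(\overline{\phi}),X)$ for every $X \in \Cc^\delta$, hence $\mu^* X. \psi(\overline{\nomj_i}^\tau,X) \le \mu^* X. \psi(V(\overline{\phi}),X)$. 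Combined with $V'(\nomi) \le \mu^* X. \psi(\overline{\nomj_i}^\tau,X)$ and $V'(\nomi) = V(\nomi)$, this gives the required inequality.

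For the $(\Rightarrow)$ direction, perfectness of $(\Cc^\delta)^\tau$ lets me write $V(\overline{\phi}) = \bigvee^\tau S$ with $S = \{\,\overline{j} \in \jty((\Cc^\delta)^\tau) \mid \overline{j} \le^\tau V(\overline{\phi})\,\}$. By the characterization of join-irreducibles in $\tau$-products from Section~\ref{sec:language}, each $\overline{j} \in S$ has the shape $\overline{\nomj_i}^\tau$ for some index $i$ and some element of $\jty((\Cc^\delta)^{\tau_i})$ filling its $i$-th slot. Applying Lemma~\ref{lem:2.1equiv} to $\psi^{\Cc^\delta}$ yields
$$\mu^* X. \psi(V(\overline{\phi}),X) = \bigvee\{\,\mu^* X.\psi(\overline{j},X) \mid \overline{j} \in S\,\}.$$
Since $V(\nomi) \in \jty(\Cc^\delta)$ is completely join-prime, the hypothesis picks out a particular $\overline{j} = \overline{\nomj_i}^\tau \in S$ satisfying $V(\nomi)\le \mu^* X.\psi(\overline{\nomj_i}^\tau,X)$. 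I then define $V'$ to agree with $V$ everywhere except at $\nomj^{\tau_i}$, where $V'(\nomj^{\tau_i})$ is the nontrivial $i$-th component of $\overline{j}$. The containment $\overline{j} \le^\tau V(\overline{\phi})$ gives $V'(\nomj^{\tau_i}) \le^{\tau_i} V(\varphi_i) = V'(\varphi_i)$, while the first required inequality persists since $\nomj^{\tau_i}$ does not occur in it.

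The main obstacle I anticipate is legitimately invoking Lemma~\ref{lem:2.1equiv}: its hypothesis demands $\bigvee S \in \Cc^\tau$, whereas admissibility of $V$ only guarantees $V(\overline{\phi}) \in (\Cc^\delta)^\tau$, since the $\mu^*,\nu^*$-binders possibly present in $\overline{\phi}\in \mathcal{L}_*$ may take values outside $\Cc$. Overcoming this should rest on the transfinite-approximation argument inside the proof of Lemma~\ref{lem:2.1equiv}: its key inductive step, showing $(\psi^{\Cc^\delta})^\alpha(\bigvee^\tau S,\bot) = \bigvee\{(\psi^{\Cc^\delta})^\alpha(\overline{j},\bot) \mid \overline{j} \in S\}$, requires only complete $\bigvee$-preservation of $\psi^{\Cc^\delta}$ and no $\Cc^\tau$-membership. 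A slight refinement of the distribution step, exploiting the precise form of the join-irreducibles in $S$, should therefore suffice in our admissible setting.
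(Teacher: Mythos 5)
Your proof follows the paper's own argument in both directions: the converse by the monotonicity of $\psi$ with respect to $\le^\tau$ together with the componentwise observation that $\overline{\nomj_i}^\tau \le^\tau V'(\overline{\phi})$, and the forward direction by decomposing $V(\overline{\phi})$ into join-irreducibles of $(\Cc^\delta)^\tau$, distributing $\mu^*$ over the resulting join via Lemma~\ref{lem:2.1equiv}, and extracting the witness from the complete join-primeness of $V(\nomi)$. The worry you raise at the end about the hypothesis $\bigvee S \in \Cc^\tau$ of Lemma~\ref{lem:2.1equiv} is a fair one, but the paper's proof does not address it either --- it simply invokes the lemma (indeed stating the slightly stronger conclusion that $\mu^* X.\psi(\overline{x},X)$ is completely $\bigvee$-preserving on $(\Cc^\delta)^\tau$); in the actual applications of ($\mu^\tau$-A-R) during runs of $\mu^*$-ALBA on restricted inductive inequalities, the substituted $\phi_i$ are binder-free PIA formulas by (NB-PIA), hence interpreted in $\Cc$ under admissible assignments, so the hypothesis is satisfied where it matters.
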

\begin{proof}
Suppose that
%ac1408
$\mathbf{C}^\delta,V
%$\mathbb{C},V
\models \nomi \le \mu^* X.\psi(\overline{\phi}/\overline{x},X)$ for some admissible assignment $V$.
By Lemma~\ref{lem:2.1equiv} we have that the term function
$\mu^* X.\psi(\overline{x},X)$ is completely $\bigvee$-preserving in
%ac1408
$(\mathbf{C}^\delta)^\tau$.
%$\mathbb{C}^\tau$.

At this point we will not distinguish between formulas and their interpretations under $V$. Since
%ac1408
$\mathbf{C}^\delta$
%$\mathbb{C}$
is a perfect modal bi-Heyting algebra we have that
$\overline{\phi} = \bigvee\{\, \overline{j} \in
\jty((\mathbf{C}^\delta)^\tau) \mid \overline{j} \le \overline{\phi} \,\}$. Thus we have
\[
\mu^* X.\psi(\overline{\phi},X) = \mu^* X.\psi(\TJ \overline{j}, X) = \TJ \{\, \mu^* X \psi(\overline{j},X) \mid \overline{j} \le \overline{\phi} \,\}.
\]
Now $V(\nomi) \le \bigvee \{\, \mu^* X \psi(\overline{j},X) \mid \overline{j} \le \overline{\phi} \,\}$. Since $V(\nomi)$ is completely join-irreducible and hence completely join-prime we have that there exists $\overline{j_0} \in \jty((\mathbf{C}^\delta)^\tau)$ with
$\overline{j_0} \le \overline{\phi}$ such that $V(\nomi) \le \mu^* X.\psi(\overline{j_0},X)$.
Recall from the end of Section~\ref{sec:language} that $\overline{j_0}$ is $\bot^{\tau_i}$ at every
$1 \le i \le n$ except at one coordinate, say $k$. There we have $(\overline{j_0})_k \in \jty((\mathbf{C}^\delta)^{\tau_k})$
and also $(\overline{j_0})_k \le^{\tau_k} \overline{\phi}_k$.
Let $\nomj_0$ be some nominal for which
$\mu^* X.\psi(\overline{\phi}/\overline{x},X)$ is $\nomj_0$-free.
Now let $V'$ be the $\nomj_0$-variant of $V$ such that $V'(\nomj_0)=(\overline{j_0})_k$.

For the converse, suppose that there exists $i \in \{1,\ldots,n\}$ and $\nomj^{\tau_i}$ such that
$ \Cc^\delta,V' \models \nomi \le \mu^*X.\psi(\overline{\nomj_i}^{\tau}/\overline{x},X)$
and $\Cc^\delta,V' \models \nomj^{\tau_i} \le^{\tau_i} \varphi_i$
where $V'$ is an admissible $\nomj^{\tau_i}$-variant of $V$.
Let us consider the $n$-tuple $\overline{\nomj_i}^\tau$. Given $k\neq i$, if  $\tau_k=1$, then the $k$-th coordinate of
$\overline{\nomj_i}^\tau$ is $\bot$ and hence $(\overline{\nomj_i}^\tau)_k \le \phi_k$. Again for
$k \neq i$, if
$\tau_k=\partial$, then the $k$-th coordinate of $\overline{\nomj_i}^\tau$ is $\top$ and thus
$(\overline{\nomj_i}^\tau)_k \le^\partial \phi_k$.
Thus we have \emph{for all $k$} that $(\overline{\nomj_i}^\tau)_k \le^{\tau_k} \phi_k$.
The fact that $\nomi \le \mu^* X.\psi(\overline{\phi}/\overline{x},X)$ follows
from the fact that $\psi$ is completely join-preserving (and hence monotone) in $\overline{x} \in (\mathbf{C}^\delta)^\tau$.
Thus we have
\[
V'(\nomi) \le V'(\mu^* X. \psi(\overline{\nomj_i}^\tau/\overline{x},X)) \le V'(\mu^* X.\psi(\overline{\phi}/\overline{x},X))
\]
and hence $\mathbf{C}^\delta, V \models \nomi\le  \mu^* X.\psi(\overline{\phi}/\overline{x},X)$.
\end{proof}

The statements and proofs of Lemma~\ref{lem:heart}, Lemma~\ref{lem:2.1equiv} and Proposition~\ref{prop:approxsound} can easily be dualised
and hence we can prove the soundness of the rule ($\nu^\tau$-A-R). \\

The reason that we need to use $\mu^*$  and $\nu^*$ in the rules ($\mu^\tau$-A-R) and ($\nu^\tau$-A-R), respectively, is shown by
the proof of Lemma~\ref{lem:2.1equiv}. If we were to use $\mu x.f^{\A^\delta}(\TJ S,x)=\bigwedge \{\,a \in A^\delta \mid f^{\A^\delta}(\bigvee S,a) \le a\,\}$
in line (1),
we would then have only $(1) \le (2)$ (as $A \subseteq A^\delta$). Thus if we formulated ($\mu^\tau$-A-R) and Lemma~\ref{lem:2.1equiv} with
$\mu$ instead of $\mu^*$, we would not have the equality in Lemma~\ref{lem:2.1equiv} and thus would not be able to show the invariance of admissible validity under ($\mu^\tau$-A-R).

\section{Soundness of the Ackermann rules}\label{sec:soundAckermann}
%\section{All inequalities on which $\mu^*$-ALBA succeeds are canonical}

In this section we prove the soundness of the Ackermann rules, (RA) and (LA). Once we have shown the soundness of (RA) and (LA), we will be able, in the next section, to prove Proposition~\ref{prop:admissiblesoundness}. This proposition declares the soundness of $\mu^*$-ALBA derivations with respect to admissible validity.

We will need the next two technical lemmas, the proofs of which will make extensive use of the algebraic results presented in the appendix. Our strategy closely follows that in~\cite{DistMLALBA}. However, we work algebraically whereas~\cite{DistMLALBA} proceeds in the setting of general frames. Moreover, we need to accommodate fixed point binders which are absent in~\cite{DistMLALBA}.

Fix a modal bi-Heyting algebra $\mathbf{A}$ of the first kind. The set of open elements of $\mathbf{A}^{\delta}$, denoted $\Open(\A^\delta)$, is defined as $\{\bigvee S \mid S \subseteq  A \}$, i.e., as all those elements of $\mathbf{A}^{\delta}$ that can be obtained as arbitrary joins of elements of $\A$. Dually, the set of closed elements of $\mathbf{A}^{\delta}$, denoted $\Clos(\A^\delta)$, is defined as $\{\bigwedge S \mid S \subseteq  A \}$, i.e., as all those elements of $\mathbf{A}^{\delta}$ that can be obtained as arbitrary meets of elements of $\A$. The intention behind the definition of syntactically open and closed $\mathcal{L}^+_*$ formulas is that admissible assignments will always interpret them as open and closed elements of $\A^\delta$, respectively. 

\begin{lemma}\label{Syn:Opn:Clsd:Appld:ClsdUp:Lemma}
Let $\phi(p, \overline{q}, \overline{\nomi}, \overline{\cnomm})$ be syntactically closed and $\psi(p, \overline{q}, \overline{\nomi}, \overline{\cnomm})$ syntactically open. Let $\overline{b} \in A$, $\overline{c} \in \jty(\A^{\delta})$ and $\overline{d} \in \mty(\A^{\delta})$.
Let $k \in \Clos(\A^\delta)$ and $u \in \Open(\A^\delta)$. Then,
\begin{enumerate}
\item
    \begin{enumerate}
    \item If $\phi(p, \overline{q}, \overline{\nomi}, \overline{\cnomm})$ is positive in $p$, then $\phi(k, \overline{b}, \overline{c}, \overline{d}) \in \Clos(\A^\delta)$ %is closed,
    and
    \item if $\psi(p, \overline{q}, \overline{\nomi}, \overline{\cnomm})$ is negative in $p$, then $\psi(k, \overline{b}, \overline{c}, \overline{d}) \in \Open(\A^\delta)$. %is open.
    \end{enumerate}
\item
    \begin{enumerate}
    \item If $\phi(p, \overline{q}, \overline{\nomi}, \overline{\cnomm})$ is negative in $p$, then $\phi(u, \overline{b}, \overline{c}, \overline{d}) \in \Clos(\A^\delta)$,% is a closed up-set
    and
    \item if $\psi(p, \overline{q}, \overline{\nomi}, \overline{\cnomm})$ is positive in $p$, then $\psi(u, \overline{b}, \overline{c}, \overline{d}) \in \Open(\A^\delta)$.
    % is an open up-set.
    %
    \end{enumerate}
\end{enumerate}
\end{lemma}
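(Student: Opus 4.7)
The plan is to prove all four sub-claims simultaneously by induction on the complexity of the formulas $\phi$ and $\psi$. This is forced on us by the mutually-recursive nature of Definition~\ref{def:synopensynclosed}: syntactically closed formulas may have syntactically open formulas as antecedents of $\rightarrow$ or as subtrahends of $-$, and vice versa, so a single induction has to track closedness, openness and the polarity of $p$ in tandem.

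The base cases are straightforward and hinge on the interpretation of atoms. The constants $\top, \bot$ and the parameters $\overline{b} \in A$ belong to $\Clos(\A^{\delta}) \cap \Open(\A^{\delta})$, while nominals and co-nominals lie respectively in $\jty(\A^{\delta}) \subseteq \Clos(\A^{\delta})$ and $\mty(\A^{\delta}) \subseteq \Open(\A^{\delta})$ (noting that, by Definition~\ref{def:synopensynclosed}, nominals occur only in syntactically closed formulas and co-nominals only in syntactically open ones, so the polarities are automatically correct). At a bare leaf $\phi = p$ the variable is positive in itself, so case 1(a) gives $\phi(k,\ldots) = k \in \Clos(\A^{\delta})$ and case 2(b) gives $\psi(u,\ldots) = u \in \Open(\A^{\delta})$, while the hypotheses of 1(b) and 2(a) cannot be met at a leaf $p$ and so these clauses are vacuous at this point.

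For each clause in the recursive definition of syntactically closed and syntactically open formulas I will invoke the corresponding preservation property for $\Clos(\A^{\delta})$ and $\Open(\A^{\delta})$, as collected in the appendix. Specifically, $\wedge$ and $\vee$ preserve both closedness and openness coordinatewise; the modal operators $\Box$, $\Diamond$, $\Db$ and $\Bb$, being completely meet- or join-preserving on the perfect algebra $\A^{\delta}$, each send closed elements to closed and open to open in the appropriate direction; and for $\rightarrow$ and $-$ one uses the identities of the form ``$\mathrm{open} \rightarrow \mathrm{closed}$ is closed'', ``$\mathrm{closed} \rightarrow \mathrm{open}$ is open'' and their duals for $-$, together with the induction hypothesis applied to the antecedent (or subtrahend) \emph{with reversed polarity in $p$}. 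The fixed-point cases are in fact the easiest: by definition, $\mu^{*} X.\phi'(X,\ldots) = \bigwedge\{\,a \in A \mid \phi'(a,\ldots) \leq a\,\}$ is a meet of elements of $A$ and hence unconditionally lies in $\Clos(\A^{\delta})$; dually, $\nu^{*} X.\psi'(X,\ldots)$ is unconditionally in $\Open(\A^{\delta})$, so no recursion into the body is actually required.

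The principal bookkeeping obstacle is to keep the four sub-claims together with the polarity-in-$p$ parameter synchronised as one descends through the connectives, and in particular to remember, at each $\rightarrow$ and each $-$ node, to swap the polarity on the antecedent or subtrahend before invoking the induction hypothesis. The genuine mathematical content, by contrast, is entirely encapsulated in the preservation lemmas about how the bi-Heyting and modal operations act on closed and open elements of the canonical extension; once those appendix facts are in hand, the induction is a straightforward, if lengthy, verification.
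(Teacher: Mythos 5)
Your proposal is correct and follows essentially the same simultaneous structural induction as the paper's proof, with the same base cases, the same unconditional treatment of $\mu^{*}$ and $\nu^{*}$ as meets/joins over $A$, and the same appeal to the appendix preservation lemmas for the modal connectives and for $\rightarrow$ and $-$. One caveat: the two facts you describe as ``open $\rightarrow$ closed is closed'' and its dual for $-$ (arising from the grammar clauses $\phi \rightarrow \psi$ for syntactically closed formulas and $\phi - \psi$ for syntactically open ones) are not actually supplied by Lemma~\ref{lem:openclosedextraops}, which only gives $k \rightarrow u \in \Open(\A^\delta)$ and $k - u \in \Clos(\A^\delta)$; they instead follow directly from the complete (anti-)distributivity of $\rightarrow$ and $-$ in the perfect algebra $\A^{\delta}$, which exhibits $u \rightarrow k$ as a meet, and $u - k$ as a join, of elements of $A$.
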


\begin{proof}
We proceed by simultaneous structural induction on $\phi$ and $\psi$. We show (1). Assume that $\phipqim$ is positive in $p$ and
$\psipqim$ is negative in $p$.
As they do not impact the overall result, we will omit  the parameters $\overline{q}$, $\overline{\nomi}$ and $\overline{\cnomm}$ and simply write $\phi(p)$ and $\psi(p)$ for $\phipqim$ and $\psipqim$ respectively.
The base cases are when $\phi$ is of the form $\top,\bot,p,q$ (where $q$ is a propositional variable different from $p$), or $\phi=\mathbf{i}$, and when $\psi$ is of the form $\top,\bot,q$ (where $q$ is a propositional variable different from $p$), or $\psi=\mathbf{m}$.
The $\phi$ cannot be a co-nominal $\mathbf{m}$ since $\phi$ is syntactically closed but $\mathbf{m}$ syntactically open. Similarly $\psi$ cannot be $p$ or a nominal $\mathbf{i}$ since $\psi$ must be negative in $p$ and any occurrence of a nominal must be negative.

Now clearly $\top,\bot,q$ are all interpreted as clopen elements of $\A^\delta$, as is $p$. Furthermore, the claims follow for $\phi$ any nominal $\mathbf{i}$ and $\psi$ any co-nominal $\mathbf{m}$ since $\jty(\A^{\delta}) \subseteq \Clos(\A^\delta)$ and $\mty(\A^{\delta}) \subseteq \Open(\A^\delta)$.

If $\phi(p)=\phi_1(p) \wedge \phi_2(p)$ or $\phi(p)=\phi_1(p) \vee \phi_2(p)$ then both $\phi_1(p)$ and $\phi_2(p)$ must be syntactically closed
and positive in $p$. Thus by the inductive hypothesis we have that $\phi_1(k) \in \Clos(\A^\delta)$ and $\phi_2(k) \in \Clos(\A^\delta)$
and both their meet and join are in $\Clos(\A^\delta)$ as this is closed under the lattice operations.

If $\psi(p)=\psi_1(p) \wedge \psi_2(p)$ or $\psi(p)=\psi_1(p) \vee \psi_2(p)$ then both
$\psi_1(p)$ and $\psi_2(p)$ must be syntactically open and negative in $p$. By the inductive hypothesis, we have
$\psi_1(k) \in \Open(\A^\delta)$ and $\psi_2(k) \in \Open(\A^\delta)$ and both their meet and join will be in $\Open(\A^\delta)$ as this
is also closed under the lattice operations.

We note that there cannot be occurrences of $\mu$ or $\nu$ in either $\phi(p)$ or $\psi(p)$. If $\phi(p)$ is of the form $\mu^*X.\phi_1(p,X)$, then $\phi(k)=\mu^*X.\phi_1(k,X)=\bigwedge\{\,a \in A \mid \phi_1(k,a)\leq a \,\}$ and so $\phi(k) \in \Clos(\A^\delta)$. If $\psi(p)$ is of the form $\nu^*X.\psi_1(p,X)$, then $\psi(k)=\nu^*X.\psi_1(k,X)=\bigvee \{\,a \in A \mid a \le \psi_1(k,a)\,\}$ and so $\psi(k) \in \Open(\A^\delta)$.

If $\phi(p)= \phi_1(p) - \phi_2(p)$ then $\phi_1(p)$ is syntactically closed and positive in $p$ and $\phi_2(p)$ is syntactically
open and negative in $p$. By the inductive hypothesis, $\phi_1(k) \in \Clos(\A^\delta)$ and $\phi_2(k) \in \Open(\A^\delta)$.
Now by
Lemma~\ref{lem:openclosedextraops}(4)
we have that $\phi_1(k)  - \phi_2(k)=\phi(k) \in \Clos(\A^\delta)$.

If $\psi(p)=\psi_1(p) \rightarrow \psi_2(p)$ then $\psi_1(p)$ is syntactically closed and positive in $p$ while $\phi_2(p)$ is
syntactically open and negative in $p$. By the inductive hypothesis we have
$\psi_1(k) \in \Clos(\A^\delta)$ and $\psi_2(k) \in \Open(\A^\delta)$.
Using Lemma~\ref{lem:openclosedextraops}(3) we see that $\psi_1(k) \rightarrow \psi_2(k) = \psi(k) \in \Open(\A^\delta)$.

Now we look at the cases for the unary connectives. We note that $\phi$ cannot be of the form $\Bb \phi_1$
%or $\btr \phi_1$ as these are
as this is
not syntactically closed. Likewise, $\psi$ cannot be of the form $\Db \psi_1$
%or $\btl \psi_1$ as these are
as this is
not syntactically open.

If $\phi(p)$ is of the form $\Box \phi_1(p)$, $\Diamond\phi_1(p)$ or $\Db\phi_1(p)$ then $\phi_1(p)$ must be syntactically closed and
positive in $p$. By the inductive hypothesis we have that $\phi_1(k) \in \Clos(\A^\delta)$ and then using
Lemma~\ref{lem:ops-usual-preservation}(1), Corollary~\ref{cor:whiteopspreservation}(2) and Lemma~\ref{lem:openclosedextraops}(2) respectively we see
that $\phi(k) \in \Clos(\A^\delta)$.

If $\psi(p)$ is of the form $\Box \psi_1(p)$, $\Diamond \psi_1(p)$ or $\Bb \psi_1(p)$ then $\psi_1(p)$ must be syntactically open and negative in $p$.
We then use the inductive hypothesis to see that $\psi_1(k) \in \Open(\A^\delta)$ and then use
Corollary~\ref{cor:whiteopspreservation}(1), Lemma~\ref{lem:ops-usual-preservation}(2), and Lemma~\ref{lem:openclosedextraops}(1)
respectively to see that $\psi(k) \in \Open(\A^\delta)$.
%
%If $\phi(p)$ is of the form $\lhd \phi_1(p)$, $\rhd \phi_1(p)$ or $\btl \phi_1(p)$, then we must have that $\phi_1(p)$ is syntactically \emph{open}
%and \emph{negative} in $p$. By the inductive hypothesis we have that $\phi_1(k) \in \Open(\A^\sigma)$. Now by
%Corollary~\ref{cor:whiteopspreservation}(4), Lemma~\ref{lem:ops-usual-preservation}(3) and Lemma~\ref{lem:Db-closed-Bb-open}(4) respectively,
%we see that $\phi(k) \in \Clos(\A^\sigma)$.
%
%If $\psi(p)$ is of the form $\lhd \psi_1(p)$, $\rhd \psi_1(p)$, or $\btr \psi_1(p)$ then we must have that
%$\psi_1(p)$ is syntactically \emph{closed} and \emph{positive} in $p$. By the inductive hypothesis $\psi_1(k) \in \Clos(\A^\sigma)$.
%Now by Lemma~\ref{lem:ops-usual-preservation}(4), Corollary~\ref{cor:whiteopspreservation}(3) and
%Lemma~\ref{lem:Db-closed-Bb-open}(3) respectively, we have that $\psi(k) \in \Open(\A^\sigma)$.
\end{proof}

\begin{lemma}\label{lem:A10}
Let $\varphi(p,\overline{q},\overline{\nomi},\overline{\cnomm})$ be syntactically closed and $\psi(p,\overline{q},\overline{\nomi},\overline{\cnomm})$ be
syntactically open. Let $D \subseteq \Clos(\A^\delta)$ be down-directed and let $U \subseteq \Open(\A^\delta)$ be up-directed, let $\overline{b} \in A$, $\overline{c} \in \jty(\A^{\delta})$ and $\overline{o} \in \mty(\A^{\delta})$. Then
\begin{enumerate}
\item
\begin{enumerate}
\item if $\phipqim$ is positive in $p$, then $\phi(\bigwedge D,\overline{b},\overline{c},\overline{o}) =
\bigwedge \{\, \phi(d,\overline{b},\overline{c},\overline{o}) \mid d \in D\,\}$, and
\item if $\psipqim$ is negative in $p$, then $\psi(\bigwedge D,\overline{b},\overline{c},\overline{o}) = \bigvee \{\,\psi(d,\overline{b},\overline{c},\overline{o}) \mid d \in D\,\}$;
\end{enumerate}
\item
\begin{enumerate}
\item if $\phipqim$ is negative in $p$, then $\phi(\bigvee U,\overline{b},\overline{c},\overline{o})= \bigwedge \{\, \phi(u,\overline{b},\overline{c},\overline{o}) \mid u \in U\,\}$, and
\item if $\psipqim$ is positive in $p$, then $\psi(\bigvee U,\overline{b},\overline{c},\overline{o}) =
\bigvee \{\, \psi(u,\overline{b},\overline{c},\overline{o}) \mid u \in U \,\}$.
\end{enumerate}
\end{enumerate}
\end{lemma}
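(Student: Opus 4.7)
The plan is to prove all four statements by simultaneous structural induction on $\phi$ and $\psi$, in the same spirit as the proof of Lemma~\ref{Syn:Opn:Clsd:Appld:ClsdUp:Lemma}. In every case where $p$ does not actually occur in the subterm, the corresponding statement is trivial, so we may focus on the cases where $p$ occurs; the polarity constraints and the syntactic shape then restrict which connectives can appear at the top. The base cases $\top, \bot, q, \nomi, \cnomm$ are immediate, as is the case $\phi = p$ (noting that $p$ itself is closed, so $\bigwedge D \in \Clos(\A^\delta)$ serves directly).

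For the Boolean skeletons $\wedge, \vee$, the statement follows from the inductive hypothesis applied to the immediate subformulas together with the complete distributivity of $\A^\delta$, which lets directed meets and joins commute with finite meets and joins. For the unary connectives $\Box, \Diamond, \Bb, \Db$, Lemma~\ref{Syn:Opn:Clsd:Appld:ClsdUp:Lemma} tells us that the inductive hypothesis produces a down-directed family of closed elements (respectively, an up-directed family of open elements), and then the familiar Scott-continuity properties of these operations on $\A^\delta$ (see Lemma~\ref{lem:ops-usual-preservation}, Corollary~\ref{cor:whiteopspreservation} and Lemma~\ref{lem:openclosedextraops} from the appendix) conclude the argument. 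The cases for the residuals $\rightarrow$ and $-$ are similar: e.g.\ if $\psi(p) = \psi_1(p) \rightarrow \psi_2(p)$ is syntactically open and negative in $p$, then $\psi_1$ is syntactically closed and positive in $p$ and $\psi_2$ is syntactically open and negative in $p$; by IH, $\psi_1(\bigwedge D, \ldots) = \bigwedge_{d \in D} \psi_1(d, \ldots)$ is a down-directed meet of closed elements and $\psi_2(\bigwedge D, \ldots) = \bigvee_{d \in D} \psi_2(d, \ldots)$ is an up-directed join of open elements, and the required equality then comes from the corresponding Scott-continuity of $\rightarrow$ from the appendix (and dually for $-$).

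The main obstacle is the fixed-point cases $\phi = \mu^* X. \phi_1(p, X)$ (syntactically closed, positive in $p$) and $\psi = \nu^* X. \psi_1(p, X)$ (syntactically open, negative in $p$); these are order-dual so we treat only the $\mu^*$ case. Recall
\[
\mu^* X.\phi_1(k, X) \;=\; \bigwedge\{\,a \in A \mid \phi_1(k, a) \le a\,\} \qquad \text{for } k \in \A^\delta.
\]
The inequality $\mu^* X.\phi_1(\bigwedge D, X) \le \bigwedge_{d \in D} \mu^* X.\phi_1(d, X)$ is immediate from the monotonicity of $\mu^* X.\phi_1(-,X)$ in its first argument (which is positive). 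For the reverse inequality, it suffices to show that every $a \in A$ satisfying $\phi_1(\bigwedge D, a) \le a$ also satisfies $\bigwedge_{d \in D} \mu^* X.\phi_1(d, X) \le a$. Fix such an $a$. Since $\phi_1$ is syntactically closed and positive in $p$, the inductive hypothesis applies to $\phi_1(-, a)$ and gives $\bigwedge_{d \in D} \phi_1(d, a) \le a$. By Lemma~\ref{Syn:Opn:Clsd:Appld:ClsdUp:Lemma}, $\{\phi_1(d, a) \mid d \in D\}$ is a down-directed family in $\Clos(\A^\delta)$, while $a \in A \subseteq \Open(\A^\delta)$.

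The crucial step is now the standard consequence of compactness of the canonical extension: a down-directed meet of closed elements that lies below an open element is already witnessed by a single member of the family. Explicitly, writing each $\phi_1(d, a)$ as $\bigwedge S_d$ with $S_d \subseteq A$, and using compactness of $\A^\delta$, one extracts a finite subset of $\bigcup_{d \in D} S_d$ with meet $\le a$, which by directedness of $D$ lies below some $\phi_1(d^*, a)$ for a single $d^* \in D$, yielding $\phi_1(d^*, a) \le a$. Thus $a$ is a pre-fixed point of $\phi_1(d^*, -)$, so $\mu^* X. \phi_1(d^*, X) \le a$, and hence $\bigwedge_{d \in D} \mu^* X.\phi_1(d, X) \le \mu^* X.\phi_1(d^*, X) \le a$, as required.
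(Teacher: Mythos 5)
Your proof is correct and follows essentially the same route as the paper's: a simultaneous structural induction in which Lemma~\ref{Syn:Opn:Clsd:Appld:ClsdUp:Lemma} supplies the closed/open status needed to combine compactness with down-directedness in the $\mu^*$/$\nu^*$ cases (your extraction of a single witness $d^*$ with $\phi_1(d^*,a)\le a$ is exactly the paper's step), and the Esakia-type lemmas handle the remaining operations. The only real divergence is that for $+(\phi_1\vee\phi_2)$ and $-(\psi_1\wedge\psi_2)$ you invoke complete distributivity plus directedness where the paper argues element-wise via completely join-prime elements and the map $\kappa$ (both work); note only that the commutation with directed meets/joins for $\Box,\Diamond,\Db,\Bb,\rightarrow,-$ rests on Lemmas~\ref{lem:whiteEsakia} and~\ref{lem:extraopspreservation} (followed by a routine diagonalization for $\rightarrow$ and $-$), not on the closed/open-preservation lemmas you cite for that step.
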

\begin{proof} We prove (1) by simultaneous induction on $\phi$ and $\psi$. As before, we will write $\phi(p)$ for $\phipqim$ and $\psi(p)$ for $\psipqim$.

The base cases of the induction for $\phi$ are when $\phi$ is of the form $\top$, $\bot$, $p$, a propositional variable $q$ other than $p$, or $\nomi$.
The base cases for $\psi$ are those when $\psi$ is of the form $\top$, $\bot$, a propositional variable $q$ other than $p$, or $\cnomm$. In each case
the claim is trivially true.

If $\phi(p)=\phi_1(p) \vee \phi_2(p)$ or $\phi(p)=\phi_1(p) \wedge \phi_2(p)$ then $\phi_1$ and $\phi_2$ are syntactically closed and positive in $p$. Similarly, if $\psi(p)=\psi_1(p) \vee \psi_2(p)$ or $\psi(p)=\psi_1(p) \wedge \psi_2(p)$ then $\psi_1$ and $\psi_2$ are syntactically open and negative in $p$.

Thus when $\phi(p)=\phi_1(p) \wedge \phi_2(p)$ the claim follows by the inductive hypothesis and the
associativity of the meet operation, and when $\psi(p)=\psi_1(p) \vee \psi_2(p)$ the claim follows by the inductive hypothesis and
the associativity of the join operation.

Now suppose that $\phi(p)=\phi_1(p) \vee \phi_2(p)$. By the inductive hypothesis,
$$\phi(\TM D) = \phi_1(\TM D) \vee \phi_2(\TM D) = \Big( \bigwedge_{d \in D} \phi_1(d) \Big) \vee \Big( \bigwedge_{d \in D} \phi_2(d) \Big).$$
Since $\bigwedge \{\, \phi_1(d) \mid d \in D\,\} \le \phi_1(e)$ and $\bigwedge\{\, \phi_2(d)\mid d \in D\,\} \le \phi_2(e)$ for all $e \in D$ we have that
$\phi(\bigwedge D) \le \bigwedge \{\, \phi_1(d) \vee \phi_2(d) \mid d \in D\,\}$. For the reverse inequality, suppose that
$x \in \jty(\A^\delta)$ and $x \nleq \phi(\bigwedge D)$. This implies that
$x \nleq \bigwedge \{\, \phi_1(d)\mid d \in D\,\}$ and $x \nleq \bigwedge \{\, \phi_2(d)\mid d \in D \,\}$. Hence there exists
$d_1,d_2 \in D$ such that $x \nleq \phi_1(d_1)$ and $x \nleq \phi_2(d_2)$. Since $D$ is down-directed, there exists
$d_3 \in D$ such that $d_3 \le d_1$ and $d_3 \le d_2$. Since $\phi_1$ and $\phi_2$ are monotone we have that
$\phi_1(d_3) \le \phi_1(d_1)$ and $\phi_2(d_3) \le \phi_2(d_2)$. This implies that
$x \nleq \phi_1(d_3)$ and $x \nleq \phi_2(d_3)$. By Lemma~\ref{lem:xckappa}(3),
$\phi_1(d_3) \le \kappa(x)$ and $\phi_2(d_3) \le \kappa(x)$ and so
$\phi_1(d_3) \vee \phi_2(d_3) \le \kappa(x)$. Hence $x \nleq \phi_1(d_3) \vee \phi_2(d_3)$ and so
$x \nleq \bigwedge \{\, \phi_1(d) \vee \phi_2(d) \mid d \in D \,\}$. Finally, by Lemma~\ref{lem:JMforleq}(1),
$\bigwedge \{\, \phi(d) \mid d \in D\,\} \le \phi(\bigwedge d)$.

If $\psi(p)=\psi_1(p) \wedge \psi_2(p)$ then by the inductive hypothesis,
$$\psi(\TM D) = \psi_1(\TM D) \wedge \psi_2(\TM D) =
\Big( \bigvee_{d \in D} \psi_1(d) \Big) \wedge \Big( \bigvee_{d \in D} \psi_2(d) \Big).$$
Since $\bigvee\{\, \psi_1(d)\mid d \in D\,\} \ge \psi_1(e)$ and
$\bigvee \{\, \psi_2(d)\mid d \in D\,\} \ge \psi_2(e)$ for all $e \in D$ we see that
$\bigvee\{\, \psi(d)\mid d \in D \,\} \le \psi(\bigwedge D)$.
For the reverse inequality, let $x \in \jty(\A^\delta)$ be such that $x \le \big(\bigvee \{\, \psi_1(d)\mid d \in D \,\} \big)
\wedge \big( \bigvee\{\, \psi_2(d)\mid d \in D\,\} \big)$.
Now $x \le \bigvee \{\, \psi_1(d)\mid d \in D \,\}$ and $x \le \bigvee \{\, \psi_2(d)\mid d \in D \,\}$ and so by Lemma~\ref{lem:xckappa}(3)
we have $\bigvee\{\, \psi_1(d) \mid d \in D \,\} \nleq \kappa(x)$ and $\bigvee \{\, \psi_2(d)\mid d \in D\,\} \nleq \kappa(x)$. This implies that
there exist $d_1,d_2 \in D$ such that $\psi_1(d_1) \nleq \kappa(x)$ and $\psi_2(d_2) \nleq \kappa(x)$. Now by Lemma~\ref{lem:xckappa}(3)
we have $x \leq \psi_1(d_1)$ and $x \leq \psi_2(d_2)$. Furthermore, by the down-directedness of $D$, there exists
$d_3 \in D$ such that $d_3 \le d_1,d_2$. Since $\psi_1$ and $\psi_2$ are antitone in $p$ we have that
$x \leq \psi_1(d_3)$ and $x \leq \psi_2(d_3)$. This gives us that $x \leq \psi_1(d_3) \wedge \psi_2(d_3)$ and hence
$x \leq \bigvee\{\, \psi_1(d) \wedge \psi_2(d) \mid d \in D \,\}$.

Suppose $\phi(p)=\mu^*X.\phi_1(p,X)$ where $\phi_1(p)$ is closed and positive in $p$. For each $d \in D$ we have
$\{\, a \in A \mid \phi_1(d,a) \le a \,\} \subseteq \{\, a \in A \mid \phi_1(\bigwedge D,a) \le a \,\}$ and hence
%ac2811 \TM used to give a smaller \bigwedge
$$\phi(\TM D) = \bigwedge \big\{\, a \in A \mid \phi_1(\TM D,a) \le a \, \big\} \le \bigwedge_{d \in D}
\Big( \bigwedge \big\{\, a \in A \mid \phi_1(d,a) \le a \,\big\}  \Big).$$
Now suppose that $\phi_1(\bigwedge D, a) \le a$. By the inductive hypothesis, $\phi_1(\bigwedge D,a)=\bigwedge \{\, \phi_1(d,a)\mid d \in D\,\}$ and
so $\bigwedge \{\,\phi_1(d,a) \mid d \in D\,\} \le a$. By Lemma~\ref{Syn:Opn:Clsd:Appld:ClsdUp:Lemma}(1a),
$\bigwedge \{\, \phi_1(d,a)\mid d \in D\,\}$ is a meet of closed elements and hence closed. Thus we can apply compactness to get a finite
set $\{d_i\}_{i=1}^n \subseteq D$ such that $\bigwedge_{i=1}^n \phi_1(d_i,a) \le a$. It follows that
$\phi_1(d_1\wedge \ldots \wedge d_n, a) \le a$ and since $D$ is down-directed, there exists $d_a \in D$ such that
$d_a$ is a lower bound for $\{d_i\}_{i=1}^n$. Now $\phi_1(d_a,a) \le a$.

Thus $\mu^*X.\phi_1(d_a,X)= \bigwedge \{\, b \in A \mid \phi_1(d_a,b) \le b \,\} \le a$ for each $a \in A$ with
$\phi_1(\bigwedge D,a) \le a$. Finally this gives
\[
\bigwedge_{d \in D} \phi(d) = \bigwedge_{d \in D} \mu^*X.\phi_1(d,X) \le \bigwedge \{ a \in A \mid \phi_1(\TM D, a) \leq a\} = \phi(\TM D).
\]

If $\psi(p)$ is of the form $\nu^*X.\psi_1(p,X)$ then $\psi_1(p)$ must be open and negative in $p$. Now
$ \psi_1(d,a) \le \psi_1(\bigwedge D,a)$ for all $d \in D$. Hence
$\{\, a \in A \mid a \leq \psi_1(d,a)\,\}\subseteq \{\,a \in A \mid a \leq \psi_1(\bigwedge D,a)\,\}$ for
all $d \in D$. This gives us
$$\bigvee_{d \in D} \psi(d) = \bigvee_{d \in D} \Big\{ \bigvee\{\, a \in A \mid a \le \psi_1(d,a) \} \Big\}
\le   \bigvee\{\, a \in A \mid a \leq \psi_1(\TM D,a)\,\} =\psi(\TM D).$$
Now suppose that $a \le \psi_1(\bigwedge D,a)$.  By the inductive hypothesis, we have that $\psi_1(\bigwedge D,a) = \bigvee \{\, \psi_1(d,a) \mid d \in D\,\}$, and by Lemma~\ref{Syn:Opn:Clsd:Appld:ClsdUp:Lemma}(1b) we have that $\psi_1(d,a)$ is open for each $d \in D$. Hence we can
apply compactness to obtain a finite set $\{d_i\}_{i=1}^n \subseteq D$ such that
$a \le \bigvee_{i=1}^n \psi_1(d_i,a)$.
Now since $D$ is down-directed, there exists $d_a \in D$ such that
$d_a$ is a lower bound for the set $\{d_i \mid 1 \le i \le n\}$.
Now $\psi_1(d_a,a) \ge \bigvee_{i=1}^n \psi_1(d_i,a)$ and hence $a \le \psi_1(d_a,a)$. Now
$a \le \nu^*X.\psi_1(d_a,X)$ and so
$$\psi(\TM D) = \bigvee \{a \in A \mid a \le \psi_1(\TM D,a) \} \le \bigvee_{d \in D} \nu^*X.\psi_1(d,X) = \bigvee_{d \in D} \psi(d).$$

If $\phi(p)$ is of the form $\odot \phi_1(p)$ for $\odot \in \{\Box, \Diamond, \Db\}$, then $\phi_1(p)$ must be
syntactically closed and positive in $p$. By the inductive hypothesis, $\phi_1(\bigwedge D) = \bigwedge \{\, \phi_1(d)\mid d \in D\,\}$. By Lemma~\ref{Syn:Opn:Clsd:Appld:ClsdUp:Lemma}(1a) we have that $\phi_1(d) \in \Clos(\A^\delta)$ for all $d \in D$. As $\phi_1$ is monotone,
and since $D$ is down-directed, we have that $\{\,\phi_1(d)\mid d \in D\,\}$ is a down-directed subset of $\Clos(\A^\delta)$.
Thus we can use the fact that $\Box$ is $\bigwedge$-preserving, Lemma~\ref{lem:whiteEsakia}(2), and Lemma~\ref{lem:extraopspreservation}(2)
to conclude that
$\odot \big(\bigwedge \{\, \phi_1(d) \mid d \in D\,\} \big)= \bigwedge \{\, \odot \phi_1(d)\mid d \in D \,\}$ for $\odot \in \{\Box, \Diamond,\Db\}$.

If $\psi(p)$ is of the form $\odot \psi_1(p)$ for $\odot \in \{\Box,\Diamond,\Bb\}$, then $\psi_1(p)$ must be syntactically open and negative in $p$.
By the inductive hypothesis, $\psi_1(\bigwedge D) = \bigvee_{d \in D} \psi_1(d)$. By Lemma~\ref{Syn:Opn:Clsd:Appld:ClsdUp:Lemma}(1b), 	
each $\psi_1(d) \in \Open(\A^\delta)$. Since $D$ is down-directed, we have that $\{\,\psi_1(d) \mid d \in D \,\}$ is an up-directed set of open elements.
We apply the fact that $\Box$ is $\bigwedge$-preserving, Lemma~\ref{lem:whiteEsakia}(2), and Lemma~\ref{lem:extraopspreservation}(2) to conclude that
$\psi(\bigwedge D) = \odot \psi_1(\bigwedge D) = \odot \Big( \bigvee \{\, \psi_1(d) \mid d \in D \,\} \Big)= \bigvee \{\, \odot \psi_1(d) \mid d \in D\,\}=\bigvee\{\, \psi(d) \mid d \in D \,\}$.

%ac0711 removed
%If $\phi(p)$ is of the form $\odot\phi_1(p)$ for $\odot \in \{\lhd,\rhd,\btl\}$ then $\phi_1(p)$ must be syntactically open and negative in $p$.
%
%If $\psi(p)$ is of the form $\odot\psi_1(p)$ for $\odot \in \{\lhd,\rhd,\btr\}$ then $\psi_1(p)$ must be syntactically closed and positive in $p$.

If $\phi(p)$ is of the form $\phi_1(p) - \phi_2(p)$ then $\phi_1(p)$ is syntactically closed and positive in $p$,
and $\phi_2(p)$ is syntactically open and negative in $p$. By the inductive hypothesis we have
$\phi(\bigwedge D) = \phi_1(\bigwedge D) - \phi_2(\bigwedge D) = \bigwedge \{\, \phi_1(d) \mid d \in D \,\} - \bigvee \{\, \phi_2(e) \mid e \in D \,\}$.
Now by Lemma~\ref{Syn:Opn:Clsd:Appld:ClsdUp:Lemma}(1a) and (1b), for all $d \in D$, we have $\phi_1(d) \in \Clos(\A^\delta)$ and $\phi_2(d) \in \Open(\A^\delta)$. Since
$\phi_1$ is positive in $p$ we have $\{\, \phi_1(d) \mid d \in D \,\}$ is a down-directed set (of closed elements). Similarly, since
$\phi_2$ is negative in $p$ we have $\{\,\phi_2(e) \mid e \in D\,\}$ is an up-directed set (of open elements). Thus we can now apply
Lemma~\ref{lem:extraopspreservation}(3) to get $\phi(\bigwedge D) = \bigwedge  \{\, \phi_1(d) - \phi_2(e) \mid d,e \in D \,\}$. Hence
$\phi(\bigwedge D) \le \bigwedge \{\, \phi_1(d) - \phi_2(d) \mid d \in D \,\}=\bigwedge \{\, \phi(d) \mid d \in D \,\}$.
Now let $d, e \in D$. Since $D$ is down-directed, there exists $f$ such that $f \le d$ and $f \le e$. This gives us
$\phi_1(f) \le \phi_1(d)$ and $\phi_2(e) \le \phi_2(f)$. By the fact that $-$ is order-preserving in its
first argument and order-reversing in its second argument, we get that $\phi_1(f) - \phi_2(f) \le \phi_1(d) - \phi_2(e)$. Thus
we have $\phi(\bigwedge D) = \bigwedge \{\, \phi(d) \mid d \in D \,\}$.

If $\psi(p)$ is of the form $\psi_1(p) \rightarrow \psi_2(p)$ then $\psi_1(p)$ is syntactically closed
and positive in $p$ and $\psi_2(p)$ is syntactically open and negative in $p$. By the inductive hypothesis we have
$$\psi(\TM D) = \psi_1(\TM D) \rightarrow \psi_2(\TM D) = \bigwedge \{\, \psi_1(d) \mid d \in D \,\} \rightarrow
\bigvee \{\, \psi_2(e) \mid e \in D \,\}.$$
Again, $\{\,\psi_1(d) \mid d \in D \,\}$ is a down-directed subset of $\Clos(\A^\delta)$ and
$\{\,\psi_2(e) \mid e \in D \,\}$ is an up-directed subset of $\Open(\A^\delta)$. We apply Lemma~\ref{lem:extraopspreservation}(4) to get
$\psi(\bigwedge D) = \bigvee \{\, \psi_1(d) \rightarrow \psi_2(e) \mid d,e \in D \,\}$.
It is then clear that $\psi(\bigwedge D) \ge \bigvee\{\, \psi_1(d) \rightarrow \psi_2(d) \mid d \in D \,\}$.
Since $D$ is down-directed, for any $d,e \in D$, there exists a lower bound $f$ for $d,e$.
Now $\psi_1(f) \le \psi_2(d)$ and $\psi_2(e) \le \psi_2(f)$ and so $\psi_1(d) \rightarrow \psi_2(d) \le \psi_1(f) \rightarrow \psi_2(f)$.
Hence we conclude that $\psi(\bigwedge D) = \bigvee \{\, \psi_1(f) \rightarrow \psi_2(f) \mid f \in D \,\} = \bigvee \{\, \psi(d) \mid d \in D \,\}$.
\end{proof}

We are now ready to prove the Ackermann lemmas which will justify the rules (RA) and (LA).

\begin{lemma}[Righthanded Ackermann lemma for mu-algebras]\label{Ackermann:Mu:Alg:Right:Lemma}
Let $\mathbf{A}$ be a mu-algebra of the first kind. Let $\alpha(\overline{q}, \overline{\nomi}, \overline{\cnomn})$, $\beta(p, \overline{q}, \overline{\nomi}, \overline{\cnomn})$ and $\gamma(p, \overline{q}, \overline{\nomi}, \overline{\cnomn})$ be
%ac2811
$\mathcal{L}^{+}_*$-formulas such that
%$\mathcal{L}^{+}_{\mu}$-formulas such that
\begin{enumerate}
\item[(i)] $\alpha(\overline{q}, \overline{\nomi}, \overline{\cnomn})$ is syntactically closed and does not contain any occurrences of $p$,
\item[(ii)] $\beta(p, \overline{q}, \overline{\nomi}, \overline{\cnomn})$ is syntactically closed and positive in $p$, and
\item[(iii)] $\gamma(p, \overline{q}, \overline{\nomi}, \overline{\cnomn})$ is syntactically open and negative in $p$.
\end{enumerate}
Then for any $\overline{b} \in A$,  $\overline{c} \in \jty(\mathbf{A}^{\delta})$ and $\overline{d} \in \mty(\mathbf{A}^{\delta})$, the following are equivalent
\begin{enumerate}
\item there exists $a \in A$ such that $\alpha(\overline{b}, \overline{c}, \overline{d}) \leq a \,\textrm{ and }\, \beta(a, \overline{b}, \overline{c}, \overline{d}) \leq \gamma(a, \overline{b}, \overline{c}, \overline{d})$, 

\item $\beta(\alpha(\overline{b}, \overline{c}, \overline{d}), \overline{b}, \overline{c}, \overline{d}) \leq \gamma(\alpha(\overline{b}, \overline{c}, \overline{d}), \overline{b}, \overline{c}, \overline{d})$.
\end{enumerate}
\end{lemma}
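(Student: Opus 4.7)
My plan is to prove $(1) \Rightarrow (2)$ by a direct monotonicity argument, and the substantive converse $(2) \Rightarrow (1)$ by expressing $\alpha(\overline{b}, \overline{c}, \overline{d})$ as a down-directed meet of elements of $A$, distributing $\beta$ and $\gamma$ over this meet via Lemma \ref{lem:A10}, and finally appealing to compactness of $\A^\delta$ together with down-directedness to extract a single witness $a_0 \in A$.

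For $(1) \Rightarrow (2)$, let $a \in A$ witness $(1)$. Because $\beta$ is positive in $p$ its term function is monotone in the $p$-coordinate, and because $\gamma$ is negative in $p$ its term function is antitone in the $p$-coordinate; combining these with $\alpha(\overline{b}, \overline{c}, \overline{d}) \le a$ and $\beta(a, \overline{b}, \overline{c}, \overline{d}) \le \gamma(a, \overline{b}, \overline{c}, \overline{d})$ yields
\[
\beta(\alpha(\overline{b}, \overline{c}, \overline{d}), \overline{b}, \overline{c}, \overline{d}) \le \beta(a, \overline{b}, \overline{c}, \overline{d}) \le \gamma(a, \overline{b}, \overline{c}, \overline{d}) \le \gamma(\alpha(\overline{b}, \overline{c}, \overline{d}), \overline{b}, \overline{c}, \overline{d}),
\]
which is $(2)$.

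For $(2) \Rightarrow (1)$, set $k := \alpha(\overline{b}, \overline{c}, \overline{d})$. Since $\alpha$ is syntactically closed and $p$-free, Lemma \ref{Syn:Opn:Clsd:Appld:ClsdUp:Lemma} (applied with $\alpha$ regarded as vacuously positive in $p$) gives $k \in \Clos(\A^\delta)$, so $k = \bigwedge D$ where $D := \{\,a \in A \mid k \le a\,\}$. The set $D$ is non-empty (it contains $\top$) and closed under finite meets in $A$, hence down-directed. Applying Lemma \ref{lem:A10}(1)(a) to $\beta$ and Lemma \ref{lem:A10}(1)(b) to $\gamma$ (whose hypotheses are exactly that $\beta$ is syntactically closed and positive in $p$, and $\gamma$ syntactically open and negative in $p$) rewrites the two sides of $(2)$ as
\[
\bigwedge_{a \in D} \beta(a, \overline{b}, \overline{c}, \overline{d}) \le \bigvee_{a \in D} \gamma(a, \overline{b}, \overline{c}, \overline{d}).
\]
By Lemma \ref{Syn:Opn:Clsd:Appld:ClsdUp:Lemma}, each $\beta(a, \overline{b}, \overline{c}, \overline{d})$ is closed and each $\gamma(a, \overline{b}, \overline{c}, \overline{d})$ is open, so each side of the displayed inequality can in turn be written as a meet (respectively, join) of $A$-elements; compactness of $\A^\delta$ then produces finite subsets $D_1, D_2 \subseteq D$ with $\bigwedge_{a \in D_1} \beta(a, \overline{b}, \overline{c}, \overline{d}) \le \bigvee_{a \in D_2} \gamma(a, \overline{b}, \overline{c}, \overline{d})$. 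Using down-directedness of $D$, choose $a_0 \in D$ with $a_0 \le a$ for every $a \in D_1 \cup D_2$; then monotonicity of $\beta$ and antitonicity of $\gamma$ in $p$ yield $\beta(a_0, \overline{b}, \overline{c}, \overline{d}) \le \bigwedge_{a \in D_1} \beta(a, \overline{b}, \overline{c}, \overline{d})$ and $\bigvee_{a \in D_2} \gamma(a, \overline{b}, \overline{c}, \overline{d}) \le \gamma(a_0, \overline{b}, \overline{c}, \overline{d})$, hence $\beta(a_0, \overline{b}, \overline{c}, \overline{d}) \le \gamma(a_0, \overline{b}, \overline{c}, \overline{d})$. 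Since $a_0 \in D$ we also have $k \le a_0$, establishing $(1)$.

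The main step requiring care is the compactness argument: one must first unpack each $\beta(a, \overline{b}, \overline{c}, \overline{d})$ as a meet of $A$-elements and each $\gamma(a, \overline{b}, \overline{c}, \overline{d})$ as a join of $A$-elements (this is precisely what Lemma \ref{Syn:Opn:Clsd:Appld:ClsdUp:Lemma} delivers) before invoking the compactness property of $\A^\delta$. The rest is bookkeeping using the down-directedness of $D$ and the monotonicity/antitonicity of $\beta$ and $\gamma$ in $p$.
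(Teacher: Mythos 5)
Your proposal is correct and follows essentially the same route as the paper's proof: both directions use the same monotonicity argument for $(1)\Rightarrow(2)$, and for $(2)\Rightarrow(1)$ both write $\alpha(\overline{b},\overline{c},\overline{d})$ as the down-directed meet of the elements of $A$ above it, distribute $\beta$ and $\gamma$ over that meet via Lemma~\ref{lem:A10}, and use Lemma~\ref{Syn:Opn:Clsd:Appld:ClsdUp:Lemma} plus compactness to extract finitely many witnesses. The only cosmetic difference is that the paper takes the witness $a$ to be the explicit finite meet $u_1\wedge\cdots\wedge u_m\wedge u'_1\wedge\cdots\wedge u'_n$, whereas you invoke down-directedness of $D$ to pick $a_0$ below $D_1\cup D_2$ — these are the same step.
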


%\texttt{Do we need to take $(\cdot)^{*}$ translations to make the above work?}

\begin{proof}
For the implication from top to bottom, it suffices to appeal to the monotonicity of  $\beta$ in $p$ and the antitonicity of $\gamma$ in $p$.

For the sake of the converse implication, assume that $\beta(\alpha(\overline{b}, \overline{c}, \overline{d}), \overline{b}, \overline{c}, \overline{d}) \leq \gamma(\alpha(\overline{b}, \overline{c}, \overline{d}), \overline{b}, \overline{c}, \overline{d})$. By Lemma \ref{Syn:Opn:Clsd:Appld:ClsdUp:Lemma}, $\alpha(\overline{b}, \overline{c}, \overline{d})$ is closed. Hence,  $\alpha(\overline{b}, \overline{c}, \overline{d}) = \bigwedge \{\,u \in A \mid \alpha(\overline{b}, \overline{c}, \overline{d}) \leq u \,\}$, making it the meet of the down-directed family $U = \{\,u \in A \mid \alpha(\overline{b}, \overline{c}, \overline{d}) \leq u \,\}$ of clopen elements.  Thus we have
$$\beta(\TM U, \overline{b}, \overline{c}, \overline{d}) \leq \gamma(\TM U, \overline{b}, \overline{c}, \overline{d}).$$
Since $\beta$ is syntactically closed and positive in $p$, and $\gamma$ is syntactically open and negative in $p$, we may apply Lemma \ref{lem:A10} to obtain
\[
\bigwedge \{\,\beta (u, \overline{b}, \overline{c}, \overline{d}) \mid u \in U \,\} \leq \bigvee \{\,\gamma(u, \overline{b}, \overline{c}, \overline{d}) \mid u \in U \,\}.
\]
By Lemma \ref{Syn:Opn:Clsd:Appld:ClsdUp:Lemma}, $\beta (u, \overline{b}, \overline{c}, \overline{d})$ is closed and $\gamma (u, \overline{b}, \overline{c}, \overline{d})$ is open for each $u \in A$. Hence, by compactness,
\begin{equation}
\label{eq:compactness:topol:Ackermann}
\bigwedge_{i = 1}^{m} \beta (u_i, \overline{b}, \overline{c}, \overline{d}) \leq \bigvee_{j = 1}^{n} \gamma(u'_j, \overline{b}, \overline{c}, \overline{d}),
\end{equation}
for some $u_1, \ldots, u_m \in A$ with $\alpha(\overline{b}, \overline{c}, \overline{d}) \leq u_i$ for $1 \leq i \leq m$, and some $u'_1, \ldots, u'_{n} \in A$ with $\alpha(\overline{b}, \overline{c}, \overline{d}) \leq u'_j$ for all $1 \leq j \leq n$.

Let $a = u_1 \wedge \cdots \wedge u_{m} \wedge u'_1 \wedge \cdots \wedge u'_n$. Then $\alpha(\overline{b}, \overline{c}, \overline{d}) \leq a \in A$. By the monotonicity of $\beta$ in $p$, the antitonicity of $\gamma$ in $p$, and \eqref{eq:compactness:topol:Ackermann}, it follows that
\begin{equation}
\label{eq:compactness:topol:Ackermann:2}
\beta (a, \overline{b}, \overline{c}, \overline{d}) \leq \gamma(a, \overline{b}, \overline{c}, \overline{d}).
\end{equation}
\end{proof}

\begin{lemma}[Lefthanded Ackermann lemma for mu-algebras]\label{Ackermann:Mu:Alg:Left:Lemma}
Let $\mathbf{A}$ be a mu-algebra of the first kind.
Let $\alpha(\overline{q}, \overline{\nomi}, \overline{\cnomn})$, $\beta(p, \overline{q}, \overline{\nomi}, \overline{\cnomn})$ and $\gamma(p, \overline{q}, \overline{\nomi}, \overline{\cnomn})$ be
%ac2811
$\mathcal{L}^{+}$-formulas such that
%$\mathcal{L}^{+}_{\mu}$-formulas such that
\begin{enumerate}
\item[(i)] $\alpha(\overline{q}, \overline{\nomi}, \overline{\cnomn})$ is syntactically open and does not contain any occurrences of $p$,
\item[(ii)] $\beta(p, \overline{q}, \overline{\nomi}, \overline{\cnomn})$ is syntactically closed and negative in $p$, and
\item[(iii)] $\gamma(p, \overline{q}, \overline{\nomi}, \overline{\cnomn})$ is syntactically open and positive in $p$.
\end{enumerate}
Then for any $\overline{b} \in A$,  $\overline{c} \in \jty(\mathbf{A}^{\delta})$ and $\overline{d} \in \mty(\mathbf{A}^{\delta})$,
 the following are equivalent
 
\begin{enumerate}
\item there exists $a \in A$ such that $a \leq \alpha(\overline{b}, \overline{c}, \overline{d}) \,\textrm{ and }\, \beta(a, \overline{b}, \overline{c}, \overline{d}) \leq \gamma(a, \overline{b}, \overline{c}, \overline{d})$,
\item $\beta(\alpha(\overline{b}, \overline{c}, \overline{d}), \overline{b}, \overline{c}, \overline{d}) \leq \gamma(\alpha(\overline{b}, \overline{c}, \overline{d}), \overline{b}, \overline{c}, \overline{d})$.
\end{enumerate}
\end{lemma}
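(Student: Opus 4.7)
The plan is to mirror the proof of the Righthanded Ackermann lemma, with the roles of meets and joins, open and closed elements, up-directed and down-directed sets, and positive and negative occurrences all dualised. The top-to-bottom implication is essentially free: assuming the existence of $a \in A$ with $a \leq \alpha(\overline{b}, \overline{c}, \overline{d})$ and $\beta(a, \overline{b}, \overline{c}, \overline{d}) \leq \gamma(a, \overline{b}, \overline{c}, \overline{d})$, the antitonicity of $\beta$ in $p$ gives $\beta(\alpha, \ldots) \leq \beta(a, \ldots)$, while the monotonicity of $\gamma$ in $p$ gives $\gamma(a, \ldots) \leq \gamma(\alpha, \ldots)$, and we concatenate the three inequalities.

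For the converse, the key is to express $\alpha(\overline{b}, \overline{c}, \overline{d})$ as a directed join of elements of $A$. Since $\alpha$ is syntactically open and the parameters $\overline{b}, \overline{c}, \overline{d}$ are admissible, Lemma \ref{Syn:Opn:Clsd:Appld:ClsdUp:Lemma}(2b) (applied with the trivial dependence on $p$) tells us that $\alpha(\overline{b}, \overline{c}, \overline{d}) \in \Open(\A^\delta)$. Write $\alpha(\overline{b}, \overline{c}, \overline{d}) = \bigvee U$ where $U = \{u \in A \mid u \leq \alpha(\overline{b}, \overline{c}, \overline{d})\}$; this set is up-directed since $A$ is closed under finite joins. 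Now I would invoke Lemma \ref{lem:A10}(2): part (2a) applies to $\beta$ (syntactically closed, negative in $p$) to give $\beta(\bigvee U, \ldots) = \bigwedge_{u \in U} \beta(u, \ldots)$, and part (2b) applies to $\gamma$ (syntactically open, positive in $p$) to give $\gamma(\bigvee U, \ldots) = \bigvee_{u \in U} \gamma(u, \ldots)$. The hypothesis therefore becomes
\[
\bigwedge_{u \in U}\beta(u, \overline{b}, \overline{c}, \overline{d}) \;\leq\; \bigvee_{u \in U}\gamma(u, \overline{b}, \overline{c}, \overline{d}).
\]

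At this point the main technical step is compactness. By Lemma \ref{Syn:Opn:Clsd:Appld:ClsdUp:Lemma}(2a), each $\beta(u, \overline{b}, \overline{c}, \overline{d})$ lies in $\Clos(\A^\delta)$, and by (2b), each $\gamma(u, \overline{b}, \overline{c}, \overline{d})$ lies in $\Open(\A^\delta)$. Compactness of $\A^\delta$ then yields finitely many $u_1, \ldots, u_m \in U$ and $u'_1, \ldots, u'_n \in U$ such that
\[
\bigwedge_{i=1}^{m}\beta(u_i, \overline{b}, \overline{c}, \overline{d}) \;\leq\; \bigvee_{j=1}^{n}\gamma(u'_j, \overline{b}, \overline{c}, \overline{d}).
\]

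Finally, set $a = u_1 \vee \cdots \vee u_m \vee u'_1 \vee \cdots \vee u'_n$. Then $a \in A$ (as $A$ is closed under finite joins) and $a \leq \alpha(\overline{b}, \overline{c}, \overline{d})$ since each join component lies in $U$. Using antitonicity of $\beta$ in $p$ we get $\beta(a, \ldots) \leq \beta(u_i, \ldots)$ for every $i$, hence $\beta(a, \ldots) \leq \bigwedge_i \beta(u_i, \ldots)$; dually, monotonicity of $\gamma$ in $p$ gives $\bigvee_j \gamma(u'_j, \ldots) \leq \gamma(a, \ldots)$. Chaining these with the previous displayed inequality delivers $\beta(a, \overline{b}, \overline{c}, \overline{d}) \leq \gamma(a, \overline{b}, \overline{c}, \overline{d})$, completing the proof. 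The only step that requires any care is verifying the hypotheses of Lemma \ref{lem:A10}(2) and of the compactness application — in particular, checking that $U$ is genuinely up-directed and that the finite join $a$ stays within $A$ rather than merely $\Open(\A^\delta)$, both of which follow from $A$ being a bounded sublattice.
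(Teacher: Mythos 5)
Your proof is correct and is precisely the order-dual of the paper's proof of the Righthanded Ackermann lemma (Lemma~\ref{Ackermann:Mu:Alg:Right:Lemma}), which is exactly the argument the paper leaves implicit for the lefthanded case: writing the open element $\alpha(\overline{b},\overline{c},\overline{d})$ as the join of the up-directed ideal $U$ of its lower bounds in $A$, applying Lemma~\ref{lem:A10}(2) and Lemma~\ref{Syn:Opn:Clsd:Appld:ClsdUp:Lemma}, invoking compactness, and taking $a$ to be the finite join of the resulting witnesses. All the side conditions you flag (up-directedness of $U$, closure of $A$ under finite joins, openness/closedness of the relevant values) are checked correctly.
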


\section{Canonicity of $\mu^*$-ALBA inequalities}\label{sec:canonicity}

We now have all of the machinery in place to prove that admissible validity for an inequality $\phi^* \le \psi^*$
is preserved under any number of applications of rules from $\mu^*$-ALBA. This will be crucial in showing both
canonicity and tame canonicity for classes of inequalities on which the algorithm succeeds.

In this section we will assume that each of the inequalities that we are working with has already undergone preprocessing
(see Section~\ref{sec:algomustar}). The fact that both admissible validity and ordinary validity is preserved throughout the
preprocessing stage is straightforward to prove.

\begin{prop}{\rm (Soundness of $\mu^*$-ALBA rules w.r.t. admissible validity)}\label{prop:admissiblesoundness}
Let $\mathbf{A}$ be a mu-algebra of the second kind, and $\phi \leq \psi$ an $\mathcal{L}_1$-inequality.
Let\, $\mathsf{FA}(\phi^* \le \psi^*):= \forall \nomi \forall \cnomm ( \nomi \le \varphi^* \: \amp \: \psi^* \le \cnomm \Rightarrow \nomi \le \cnomm)$
and let $\{\,\mathsf{QIneq}_k \mid 1 \le k \le n \,\}$ be a set of quasi-inequalities obtained from $\mathsf{FA}(\phi^* \leq \psi^*)$
through the application of $\mu^*$-ALBA rules.
Then\,
$\mathbf{A}^{\delta} \models_{\mathbf{A}} \phi^* \le \psi^*$ iff\, $\mathbf{A}^{\delta} \models_{\mathbf{A}} \{\,\mathsf{QIneq}_k \mid 1 \le k \le n\,\}$.
\end{prop}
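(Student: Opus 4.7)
The plan is to establish the claim by induction on the number $n$ of rewrite-rule applications used to pass from the quasi-inequality $\mathsf{FA}(\phi^* \le \psi^*)$ to $\{\mathsf{QIneq}_k\}_{k=1}^n$. The base case, $n = 0$, requires the equivalence $\mathbf{A}^\delta \models_\mathbf{A} \phi^* \le \psi^*$ iff $\mathbf{A}^\delta \models_\mathbf{A} \mathsf{FA}(\phi^* \le \psi^*)$. This is the soundness of the First Approximation rule (FA): the $(\Rightarrow)$ direction is immediate, while $(\Leftarrow)$ follows from the join-density of $\jty(\mathbf{A}^\delta)$ and the meet-density of $\mty(\mathbf{A}^\delta)$ in $\mathbf{A}^\delta$. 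Concretely, if an admissible $V$ were to refute $\phi^* \le \psi^*$, then there would exist $\nomi_0 \in \jty(\mathbf{A}^\delta)$ with $\nomi_0 \le V(\phi^*)$ and $\cnomm_0 \in \mty(\mathbf{A}^\delta)$ with $V(\psi^*) \le \cnomm_0$ such that $\nomi_0 \not\le \cnomm_0$; extending $V$ by $V'(\nomi) = \nomi_0$ and $V'(\cnomm) = \cnomm_0$ then refutes $\mathsf{FA}(\phi^* \le \psi^*)$.

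For the inductive step, it is enough to show that each individual rule of $\mu^*$-ALBA is sound and invertible with respect to admissible validity, so that its application preserves the truth value of the quasi-inequality to which it is applied under any admissible assignment. I would proceed by rule family. The residuation rules are immediate from the residuation laws for $\wedge, \rightarrow$ and $\vee, -$ recorded in Section~\ref{sec:language}, and hold on $\mathbf{A}^\delta$ under every assignment. The adjunction rules ($\vee$LA), ($\wedge$RA), ($\Diamond$LA), ($\Box$RA) are similarly immediate; the latter two rely on the existence of the adjoints $\blacksquare$ and $\Diamondblack$ on $\mathbf{A}^\delta$, guaranteed by $\mathbf{A}^\delta$ being a perfect modal bi-Heyting algebra. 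The ordinary approximation rules admit a uniform justification: expand the relevant subformula via join-density (into a join of nominals) or meet-density (into a meet of co-nominals); push the outer operation through using the complete meet- or join-preservation properties on $\mathbf{A}^\delta$; then invoke the complete join-primeness of elements of $\jty(\mathbf{A}^\delta)$ or meet-primeness of elements of $\mty(\mathbf{A}^\delta)$ to extract a witnessing (co-)nominal. The fixed point approximation rules ($\mu^\tau$-A-R) and ($\nu^\tau$-A-R) are exactly Proposition~\ref{prop:approxsound} and its order-dual; the hypothesis that $\mathbf{A}$ is a mu-algebra of the second kind, needed there, is precisely what is assumed here.

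The crux of the argument, and the step I expect to require the most care, is the treatment of the two Ackermann rules (RA) and (LA). For these I would invoke the Righthanded and Lefthanded Ackermann lemmas for mu-algebras (Lemmas~\ref{Ackermann:Mu:Alg:Right:Lemma} and~\ref{Ackermann:Mu:Alg:Left:Lemma}), noting first that by Lemma~\ref{lem:type2=>type1} a mu-algebra of the second kind is also of the first kind, so those lemmas apply to $\mathbf{A}$. A mild generalization is required because the rules permit conjunctions $\bigamp_{i=1}^n \alpha_i \le p$ and $\bigamp_{j=1}^m \beta_j \le \gamma_j$ whereas the lemmas are stated for a single triple $(\alpha, \beta, \gamma)$. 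For (RA) one first sets $\alpha := \bigvee_{i=1}^n \alpha_i$, which is again syntactically closed and whose canonical-extension interpretation remains in $\Clos(\mathbf{A}^\delta)$ since that set is closed under finite joins; applying Lemma~\ref{Ackermann:Mu:Alg:Right:Lemma} to each pair $(\beta_j, \gamma_j)$ separately yields elements $a_j \in A$ with $\alpha \le a_j$ and $\beta_j(a_j) \le \gamma_j(a_j)$; then $a := \bigwedge_{j=1}^m a_j \in A$ serves as a common witness for $\exists p$, with the required inequalities following from monotonicity of each $\beta_j$ and antitonicity of each $\gamma_j$ in $p$. The case of (LA) is order-dual. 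Combining these ingredients closes the induction and yields the desired equivalence.
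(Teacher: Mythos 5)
Your proposal is correct and follows essentially the same route as the paper's proof: induction on the number of rule applications, with the base case being the soundness of (FA) via join-/meet-density of $\jty(\A^\delta)$ and $\mty(\A^\delta)$, the ordinary approximation rules handled by density plus complete join-/meet-primeness, the fixed point approximation rules delegated to Proposition~\ref{prop:approxsound}, and the Ackermann rules to Lemmas~\ref{Ackermann:Mu:Alg:Right:Lemma} and~\ref{Ackermann:Mu:Alg:Left:Lemma}. Your explicit reduction of the multi-conjunct Ackermann rules to the single-triple lemmas (taking $\alpha = \bigvee_i \alpha_i$, obtaining witnesses $a_j$ for each pair $(\beta_j,\gamma_j)$, and combining them into the common witness $a = \bigwedge_j a_j$), together with the observation that Lemma~\ref{lem:type2=>type1} licenses applying the first-kind Ackermann lemmas, correctly fills in details the paper leaves implicit.
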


\begin{proof}
We proceed by induction on the number of rule applications. For the base case, suppose that there are no applications of rules
to $\mathsf{FA}(\phi^* \le \psi^*)$. Assume $\A^\delta \models_{\A} \phi^* \leq \psi^*$. Let $V$ be an admissible assignment such that $V(\nomi) \leq V(\phi^*)$ and $V(\psi^*) \leq V(\cnomm)$. By assumption we have that $V(\phi^*) \leq V(\psi^*)$ and hence by transitivity we have $V(\nomi) \leq V(\cnomm)$.

Conversely, assume that $\A^\delta \models_{\A} \nomi \leq \phi^* \: \&  \: \psi^* \leq \cnomm \Rightarrow \nomi \leq \cnomm$. Let $V$ be any admissible assignment. We need to show that $V(\phi^*) \leq V(\psi^*)$. By the join-density of $\jty(\A^\delta)$ and the meet-density of $\mty(\A^\delta)$, it is sufficient to show that for every $i \in \jty(\A^\delta)$ and $m \in \mty(\A^\delta)$, if $i \leq V(\phi^*)$ and $V(\psi^*) \leq m$, then $i \leq m$. Accordingly, suppose that $i \in \jty(\A^\delta)$ and $m \in \mty(\A^\delta)$ such that $i \leq V(\phi^*)$ and $V(\psi^*) \leq m$. Let  $V'\sim_{\nomi,\cnomm} V$ such that $V'(\nomi) = i$ and $V'(\cnomm) = m$. Then, $V'(\nomi) \leq V'(\phi^*)$ and $V'(\psi^*) \leq V'(\cnomm)$, so
by our assumption $i \leq m$.

Now suppose that $\mathcal{Q}_1=\{\,\mathsf{QIneq}_i \mid 1 \le i \le n \,\}$ is a set of quasi-inequalities obtained from $\mathsf{FA}(\phi^* \le \psi^*)$ via $k$ rule applications.
Let $\mathcal{Q}_2=\{\,\mathsf{QIneq}_i \mid i \neq m, 1 \le m \le n\,\} \cup \{\,\mathsf{QIneq}_{m_j} \mid 1 \le j \le \ell\,\}$
be a set of quasi-inequalities obtained from $\mathcal{Q}_1$ via a single rule application.
(To be clear,  $\mathcal{Q}_2$ is the same as $\mathcal{Q}_1$ except for the quasi-inequality $\mathsf{QIneq}_m \in \mathcal{Q}_1$
which has been changed via the single rule application.)

We now cover the various possible cases for the $(k+1)$-th rule application that gives us
$\{\,\mathsf{QIneq}_{m_j} \mid 1 \le j \le \ell \,\}$ from $\mathsf{QIneq}_m$.
The cases of the residuation and adjunction rules are clear from the
properties of the residuals and adjoints.

We prove the case of ($\rightarrow$Appr).
Let $V$ be an admissible assignment such that $V(\chi \rightarrow \phi) \le V(\cnomm)$. We observe that
$\big( \TJ\{\, j \in \jty(\A^\delta) \mid j \le V(\chi)\,\}\big) \rightarrow \big(\TM\{\, n \in \mty(\A^\delta) \mid V(\phi) \le n \,\}\big) \le V(\cnomm)$.
Since $\rightarrow$ is completely join-reversing in its first coordinate and completely meet-preserving in its second
coordinate, we get $ \bigwedge\{\, j \rightarrow n \mid j \le V(\chi), V(\phi) \le n\,\} \le V(\cnomm)$. As
$V(\cnomm) \in \mty(\A^\delta)$ and hence completely meet-prime, there must exist $j_0 \in \jty(\A^\delta)$ with
$j_0 \le V(\chi)$ and $n_0 \in \mty(\A^\delta)$ with $V(\phi) \le n_0$ such that $j_0 \rightarrow n_0 \le V(\cnomm)$. Now choose
$\nomj \in \NOM, \cnomn \in \CNOM$ such that $\chi \rightarrow \phi\le \cnomm$ is $(\nomj,\cnomn)$-free and construct
a $(\nomj,\cnomn)$-variant $V'$ of $V$ such that $V'(\nomj)=j_0$ and $V'(\cnomn)=n_0$.
The opposite direction only requires the observation that $\rightarrow$ is order-reversing in the first
coordinate and order-preserving in the second coordinate.

The soundness and invertibility of the remaining ordinary approximation rules will follow using properties of canonical extension $\A^\delta$, especially the join-density and join-primeness of $\jty(\A^\delta)$ and the meet-density and meet-primeness of $\mty(\A^\delta)$.

The case of the rule application being that of the Approximation Rules ($\mu^\tau$-A-R) and ($\nu^\tau$-A-R) follows from
Proposition~\ref{prop:approxsound}. We observe that these are the only rules which could possibly result in $\ell > 1$.

The case for the Ackermann rules (RA) and (LA) follow from Lemmas~\ref{Ackermann:Mu:Alg:Right:Lemma} and~\ref{Ackermann:Mu:Alg:Left:Lemma}, respectively.
\end{proof}

\begin{cor}\label{cor:type1soundness} Let $\A$ be a mu-algebra of the first kind,
and $\phi \leq \psi$ an $\Tm_1$-inequality. Let $\mathsf{QIneq}$ be a quasi-inequality
obtained from $\mathsf{FA}(\phi^* \le \psi^*)$ through the application of $\mu^*$-ALBA rules
none of which is ($\mu^\tau$-A-R) or ($\nu^\tau$-A-R). Then
$\A^\delta \models_{\A} \phi^* \le \psi^*$ if and only if $\A^\delta \models_{\A} \mathsf{QIneq}$.
\end{cor}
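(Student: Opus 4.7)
The plan is to mimic the proof of Proposition~\ref{prop:admissiblesoundness} almost verbatim, but carefully inspecting which rule applications actually force the assumption that $\A$ is of the second kind. My claim is that the second-kind hypothesis is only ever invoked through Proposition~\ref{prop:approxsound} (the soundness of the fixed point approximation rules), via Lemma~\ref{lem:2.1equiv}. Hence, once the rules ($\mu^\tau$-A-R) and ($\nu^\tau$-A-R) are banned, first-kind suffices.

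First, I would proceed by induction on the number $k$ of $\mu^*$-ALBA rule applications producing $\mathsf{QIneq}$ from $\mathsf{FA}(\phi^* \le \psi^*)$. The base case $k=0$ is identical to that of Proposition~\ref{prop:admissiblesoundness}: it uses only the join-density of $\jty(\A^\delta)$ and the meet-density of $\mty(\A^\delta)$ in $\A^\delta$, which are features of the canonical extension construction and do not depend on whether $\A$ is of the first or second kind. For the inductive step, suppose $\A^\delta \models_\A \phi^* \leq \psi^*$ iff $\A^\delta \models_\A \mathcal{Q}_1$ after $k$ applications, and let $\mathcal{Q}_2$ be obtained from $\mathcal{Q}_1$ by one further application of a $\mu^*$-ALBA rule other than ($\mu^\tau$-A-R) or ($\nu^\tau$-A-R). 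One has to check, for each admissible rule, that the bi-implication of admissible validity between $\mathcal{Q}_1$ and $\mathcal{Q}_2$ holds, using at most the first-kind assumption.

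The residuation rules and adjunction rules ($\vee$LA), ($\wedge$RA), ($\Diamond$LA), ($\Box$RA) are justified purely by the fact that $\A^\delta$ is a perfect modal bi-Heyting algebra carrying the residuals and adjoints of its operations; no assumption on fixed-point behaviour in $\A$ is involved. The ordinary approximation rules ($\Box$Appr), ($\Diamond$Appr), ($\rightarrow$Appr), ($-$Appr) are justified, as in the proof of Proposition~\ref{prop:admissiblesoundness}, using the join-density and join-primeness of $\jty(\A^\delta)$ and the meet-density and meet-primeness of $\mty(\A^\delta)$, together with the order-theoretic preservation properties of the connectives in $\A^\delta$ --- again, independent of the kind of $\A$. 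The Ackermann rules (RA) and (LA) are exactly the content of Lemmas~\ref{Ackermann:Mu:Alg:Right:Lemma} and~\ref{Ackermann:Mu:Alg:Left:Lemma}, which are explicitly stated and proved for mu-algebras of the \emph{first} kind. The only rules whose soundness proof (Proposition~\ref{prop:approxsound}) required the second-kind hypothesis --- through the use of Lemma~\ref{lem:2.1equiv} to swap $\mu^*$ with $\mu$ and appeal to the second-kind characterisation of $\mu_2 x. f(x)$ --- are precisely the excluded rules ($\mu^\tau$-A-R) and ($\nu^\tau$-A-R).

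The main (and only real) thing to be careful about is auditing Proposition~\ref{prop:admissiblesoundness} to verify that the second-kind hypothesis is genuinely used only in the fixed-point approximation case and nowhere else implicitly. In particular, Lemmas~\ref{Syn:Opn:Clsd:Appld:ClsdUp:Lemma} and~\ref{lem:A10}, on which the soundness of the Ackermann rules depends, only assume $\A$ is of the first kind (so that $\mu^*$ and $\nu^*$-terms on $\A^\delta$ are well defined as meets and joins of elements of $A$); they nowhere require the stronger convergence of transfinite iteration sequences within $\A$ itself. Thus the induction goes through, and concludes the desired bi-implication $\A^\delta \models_\A \phi^* \leq \psi^*$ iff $\A^\delta \models_\A \mathsf{QIneq}$ under the weaker assumption that $\A$ is a mu-algebra of the first kind.
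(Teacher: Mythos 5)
Your proposal is correct and follows essentially the same route as the paper: the paper's own proof likewise observes that the second-kind hypothesis enters Proposition~\ref{prop:admissiblesoundness} only through Proposition~\ref{prop:approxsound} and Lemma~\ref{lem:2.1equiv}, i.e., only in the cases of ($\mu^\tau$-A-R) and ($\nu^\tau$-A-R), while all remaining rules (including the Ackermann rules via Lemmas~\ref{Ackermann:Mu:Alg:Right:Lemma} and~\ref{Ackermann:Mu:Alg:Left:Lemma}) need only the first-kind assumption. Your more detailed audit of the individual rule cases is a fuller write-up of the same argument.
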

\begin{proof} The only part of the proof of Proposition \ref{prop:admissiblesoundness} above that requires
$\A$ to be a mu-algebra of the second kind is the case of ($\mu^\tau$-A-R) and ($\nu^\tau$-A-R).
Those cases rely on Proposition~\ref{prop:approxsound}, which is proved using Lemma~\ref{lem:2.1equiv}. The proof of that lemma requires
$\A$ be of the second kind. The soundness and invertibility of the remaining rules are
proved exactly as in Proposition~\ref{prop:admissiblesoundness} and do not require
$\A$ to be of the second kind.
\end{proof}

In order to complete the right-hand side of the U-shaped argument, we need to know that
the rules can be `undone' when using arbitrary assignments on the algebras that are the canonical extensions of
mu-algebras of the first kind.

\begin{prop}
\label{prop:ordinarysoundness}
Let $\A$ be a mu-algebra of the first kind.
For any $\Tm_1$-inequality $\phi \le \psi$ on which a tame run of $\mu^*$-ALBA succeeds,
we have that $\A^\delta \models$ {\upshape\texttt{pure}}$(\varphi^* \le \psi^*)$
if and only if  $\A^\delta \models \varphi^* \le \psi^*$.
\end{prop}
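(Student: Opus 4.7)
The plan is to establish the right-hand arm of the U-shaped argument (Figure~\ref{fig:generalU}) in the restricted setting of tame runs. Since by assumption no application of $(\mu^\tau$-A-R$)$ or $(\nu^\tau$-A-R$)$ occurs, the only rules to be checked are the first approximation rule FA, the residuation rules, the adjunction rules, the ordinary approximation rules, and the Ackermann rules (RA) and (LA). Arguing by induction on the length of the tame run, I would show that each of these rules is both sound \emph{and} invertible on $\A^\delta$ under \emph{arbitrary} assignments --- as opposed to the admissible assignments handled in Proposition~\ref{prop:admissiblesoundness}. Chaining the resulting equivalences yields the desired biconditional between $\A^\delta \models \texttt{pure}(\varphi^* \le \psi^*)$ and $\A^\delta \models \varphi^* \le \psi^*$.

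For FA and for the residuation, adjunction, and ordinary approximation rules, the key observation is that the justifications sketched in the proof of Proposition~\ref{prop:admissiblesoundness} never actually invoke admissibility. They rely purely on features of $\A^\delta$ as a perfect modal bi-Heyting algebra: existence of residuals and adjoints for the extra operations, together with the join-density and join-primeness of $\jty(\A^\delta)$ and the meet-density and meet-primeness of $\mty(\A^\delta)$. All of these remain available when the assignment ranges over the whole of $\A^\delta$, so the soundness and invertibility arguments transfer verbatim.

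The Ackermann rules are the potentially delicate step, but they in fact become \emph{easier} in this setting than in Lemma~\ref{Ackermann:Mu:Alg:Right:Lemma} and Lemma~\ref{Ackermann:Mu:Alg:Left:Lemma}. For an arbitrary assignment $V$ on $\A^\delta$ and a quasi-inequality of the form $\exists p [\bigamp_{i=1}^n \alpha_i \le p \amp \bigamp_{j=1}^m \beta_j(p) \le \gamma_j(p)]$ together with its (RA)-image $\bigamp_{j=1}^m \beta_j(\bigvee_{i=1}^n \alpha_i / p) \le \gamma_j(\bigvee_{i=1}^n \alpha_i / p)$, the implication from top to bottom reduces to the monotonicity of each $\beta_j$ and antitonicity of each $\gamma_j$ in $p$, exactly as in the proof of Lemma~\ref{Ackermann:Mu:Alg:Right:Lemma}. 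For the converse, the requirement that the witness come from a proper subalgebra $A$ disappears, and one simply takes $p := \bigvee_{i=1}^n V(\alpha_i) \in A^\delta$ as witness. The compactness and down-directed-meet machinery powered by Lemma~\ref{Syn:Opn:Clsd:Appld:ClsdUp:Lemma} and Lemma~\ref{lem:A10} is therefore not required; the case of (LA) is treated order-dually.

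The one subtlety worth flagging is why one cannot simply invoke Corollary~\ref{cor:type1soundness} with $\A^\delta$ substituted for $\A$, which would seem to yield the result in a single step: the reason is that $(\A^\delta)^\delta$ is in general not isomorphic to $\A^\delta$ (compactness of the extension typically fails), so admissible validity on $(\A^\delta)^\delta$ is not the same thing as validity on $\A^\delta$. What rescues the argument is precisely that for tame runs the fixed-point-approximation rules --- the only rules whose soundness proofs genuinely required passing to a canonical extension of a mu-algebra of the second kind --- are excluded, and the remaining rules are each justified directly by the structure of $\A^\delta$ itself.
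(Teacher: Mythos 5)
Your proposal is correct and follows essentially the same route as the paper: observe that admissibility is never used in the soundness/invertibility arguments for the non-Ackermann rules in Proposition~\ref{prop:admissiblesoundness}, and for (RA)/(LA) note that on the perfect algebra $\A^\delta$ the Ackermann equivalence reduces to monotonicity/antitonicity in one direction and to taking the $p$-variant with $V'(p)=\bigvee_i V(\alpha_i)$ (resp.\ the meet) as witness in the other, with no need for the compactness machinery of Lemmas~\ref{Syn:Opn:Clsd:Appld:ClsdUp:Lemma} and~\ref{lem:A10}. Your closing remark on why Corollary~\ref{cor:type1soundness} cannot simply be re-applied to $\A^\delta$ is a sensible extra observation not present in the paper, but it does not alter the argument.
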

\begin{proof}
Since $\texttt{pure}(\phi^* \le \psi^*)$ is the result of a successful \emph{tame} run of $\mu^*$-ALBA, we know that
$\texttt{pure}(\phi^* \le \psi^*)$ is a single quasi-inequality obtained via some finite
number of rule applications from $\mu^*$-ALBA, not including ($\mu^\tau$-A-R) or ($\nu^\tau$-A-R). Not including (RA) and (LA),
the soundness and invertibility
of each of the rules under arbitrary (i.e., not necessarily admissible)
%ac1408
assignments
%valuations
is clear from the fact that the admissibility of the
%ac1408
assignments
%assignments/valuations
was not used in any of the proofs in Proposition~\ref{prop:admissiblesoundness}.

%Although the Ackermann Lemmas do make use of the fact that we are interpreting the formulas using
%admissible assignments on $\A^\delta$, the proofs are easily generalized to the case of arbitrary assignments.
When working on perfect algebras, the Ackermann lemmas are easy to prove, and depend only on the monotonicity and antitonicity of the formulas involved. For (RA), the direction from top to bottom can be proved
using only the fact that $\beta$ is positive in $p$ and $\gamma$ is negative in $p$. The direction from
bottom to top can be shown by taking $V'$ to be the $p$-variant of $V$ such that $V'(p)=V(\alpha)$.
\end{proof}

\begin{figure}
\begin{center}
\begin{tikzpicture}
\path (0,0) node(a) {$\A \models \phi \le \psi$}
			(0,-0.5) node(b) [rotate=90] {$\Leftrightarrow$}
			(0,-1) node(c) {$\A^{\delta} \models_\A \phi^* \le \psi^*$}
			(0,-1.5) node(i) [rotate=90] {$\Leftrightarrow$}
			(0,-2) node(d) {$\A^{\delta} \models_\A \texttt{pure}(\phi^* \le \psi^*)$}
			(2.5,-2) node(j) {$\Longleftrightarrow$}
			(5,-2) node(f) {$\A^\delta \models \texttt{pure}(\phi^* \le \psi^*)$}
			(5,-1) node(g) [rotate=90] {$\Longrightarrow$}
			(5,0) node(k) {$\A^\delta \models \phi^* \le \psi^*$};
\end{tikzpicture}
\caption{The proof of tame canonicity for $\Tm_1$-inequalities on which a tame run of $\mu^*$-ALBA succeeds.}
\label{fig:final-U}
\end{center}
\end{figure}

\begin{theorem}\label{thm:tamecanonicity}{\upshape\textbf{(Tame Canonicity)}}
All $\mathcal{L}_1$-inequalities on which a tame run of $\mu^*$-ALBA succeeds are tame canonical.
\end{theorem}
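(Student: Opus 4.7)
The theorem is proved by instantiating the U-shaped argument diagrammed in Figure~\ref{fig:final-U}. Fix a mu-algebra $\A$ of the first kind and an $\mathcal{L}_1$-inequality $\phi \le \psi$ on which a tame run of $\mu^*$-ALBA succeeds, producing $\texttt{pure}(\phi^* \le \psi^*)$. The goal is to chain four equivalences linking $\A \models \phi \le \psi$ to $\A^\delta \models \phi^* \le \psi^*$.

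Descending the left arm of the ``U'': the uppermost biconditional $\A \models \phi \le \psi \Leftrightarrow \A^\delta \models_\A \phi^* \le \psi^*$ is obtained by a structural induction showing that every admissible assignment $V$ on $\A^\delta$ corresponds to a unique assignment on $\A$ under which $V(\phi^*)$ and $V(\psi^*)$ coincide with the values of $\phi$ and $\psi$ computed in $\A$; the only delicate case reads $V(\mu^* X.\chi(X)) = \bigwedge\{a \in A \mid \chi(a) \le a\}$, which by the first-kind assumption on $\A$ is exactly $(\mu X.\chi(X))^\A$, and dually for $\nu^*$. The second left-hand biconditional $\A^\delta \models_\A \phi^* \le \psi^* \Leftrightarrow \A^\delta \models_\A \texttt{pure}(\phi^* \le \psi^*)$ is Corollary~\ref{cor:type1soundness}: its additional hypothesis that neither $(\mu^\tau$-A-R) nor $(\nu^\tau$-A-R) was applied is satisfied precisely by the tameness of the run, so the corollary applies even though $\A$ is only of the first kind. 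At the foot of the U, admissible and ordinary validity coincide on pure (quasi-)inequalities, since these contain no propositional variables and so the two notions of assignment agree on all symbols they see. Ascending the right arm, Proposition~\ref{prop:ordinarysoundness} supplies the equivalence $\A^\delta \models \texttt{pure}(\phi^* \le \psi^*) \Leftrightarrow \A^\delta \models \phi^* \le \psi^*$ (only the implication is needed).

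Since each of the four steps either is an immediate invocation of an already-proven result or reduces to a routine induction, the overall proof is a composition of known facts. The subtlest step, and the one worth flagging, is the uppermost left-hand biconditional: it is essential to evaluate the starred term $\phi^*$ rather than $\phi$ in $\A^\delta$, because $\mu^* X.\chi(X)$ restricts its defining meet to pre-fixed points of $\chi$ lying in $\A$, so its value under an admissible assignment reproduces $(\mu X.\chi(X))^\A$, whereas the unstarred $\mu X.\chi(X)$ would in general admit further pre-fixed points in $\A^\delta \setminus \A$ and thus yield a strictly smaller element. This is precisely the design motivation for introducing $\mu^*$ and $\nu^*$, and is the mechanism through which the first-kind assumption is consumed.
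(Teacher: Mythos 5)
Your proposal is correct and follows essentially the same route as the paper: the identical chain of four equivalences, with the first justified by the agreement of $\mu^*$/$\nu^*$ interpretations on $\A$ and $\A^\delta$ for algebras of the first kind, the second by Corollary~\ref{cor:type1soundness} (applicable precisely because the run is tame), the third by the absence of propositional variables in $\texttt{pure}(\phi^*\le\psi^*)$, and the fourth by Proposition~\ref{prop:ordinarysoundness}. Your closing remark on why the starred binders are essential matches the paper's own discussion at the end of Section~\ref{sec:sound-fp-approx}.
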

\begin{proof}
Suppose that a tame run of $\mu^*$-ALBA succeeds on the inequality $\varphi \le \psi$.
%That is, $\texttt{pure}(\phi\le \psi)$ contains no propositional variables.
Then we have the following sequence of equivalences:
\begin{eqnarray}
&&\A \models \phi \leq \psi \label{eq:tamecanonicity:1}\\
&\Longleftrightarrow  &\A^\delta \models_{\A} \phi^{*} \leq \psi^{*}\label{eq:tamecanonicity:2}\\
&\Longleftrightarrow  &\A^{\delta} \models_{\A} \texttt{pure}(\phi^{*} \leq \psi^{*})\label{eq:tamecanonicity:3}\\
&\Longleftrightarrow  &\A^{\delta} \models \texttt{pure}(\phi^{*} \leq \psi^{*})\label{eq:tamecanonicity:4}\\
&\Longleftrightarrow  &\A^{\delta} \models \phi^{*} \leq \psi^{*}\label{eq:tamecanonicity:5}
\end{eqnarray}
The equivalence of \eqref{eq:tamecanonicity:1} and \eqref{eq:tamecanonicity:2} follows from the fact that the admissible validity of formulas without $\mu$ or $\nu$ on $\mathbf{A}^\delta$
will agree with ordinary admissible validity of formulas without $\mu$ or $\nu$. For formulas with $\mu$ or $\nu$, the
definition of $\mu^*$ and $\nu^*$ assures that the interpretation of such formulas will be identical on
$\mathbf{A}$ and $\mathbf{A}^\delta$.
For the equivalence of \eqref{eq:tamecanonicity:2} and \eqref{eq:tamecanonicity:3} we use
Corollary~\ref{cor:type1soundness}.
The equivalence of \eqref{eq:tamecanonicity:3} and \eqref{eq:tamecanonicity:4} follows from the fact that $\texttt{pure}(\varphi^* \le \psi^*)$ contains no propositional variables
and hence that its admissible validity and validity coincide. The final equivalence is the statement of Proposition~\ref{prop:ordinarysoundness}.
\end{proof}

The next two lemmas and the proposition which follows are needed to convert formulas of the form  $\mu^*X.\phi(X)$ back to $\mu X.\phi(X)$.
The first of the lemmas is proved in~\cite{muALBA}.

\begin{lemma}\label{lem:2.1orig}{\upshape \cite[Lemma 2.1]{muALBA}} Let $\mathbf{L}$ and $\mathbf{M}$ be complete lattices and
$G : M \times L \to L$. Let $\mu y.G(-,y) : \mathbf{M} \to \mathbf{L}$ be given for $a \in M$ by
$a \mapsto \bigwedge \{\, x \in L \mid G(a,x) \le x\,\}$.

If $G$ is completely join-preserving, then $\mu y.G(-,y) : \mathbf{M} \to \mathbf{L}$ is defined everywhere on $M$ and is
completely join-preserving.
\end{lemma}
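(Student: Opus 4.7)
The plan is to exploit the fact that a completely join-preserving map out of a product of complete lattices (with the coordinatewise order) decomposes as a join of two completely join-preserving unary maps, which reduces the computation of $\mu y.G(a,y)$ to a standard Kleene iteration whose dependence on $a$ is visibly join-preserving.

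First I would establish the decomposition. Since $(a,b) = (a, \bot) \vee (\bot, b)$ in $\mathbf{M} \times \mathbf{L}$, complete join-preservation of $G$ yields
\[
G(a,b) = G(a, \bot) \vee G(\bot, b).
\]
Setting $G_1(a) := G(a,\bot)$ and $G_2(b) := G(\bot, b)$, one checks that $G_1 : \mathbf{M} \to \mathbf{L}$ and $G_2 : \mathbf{L} \to \mathbf{L}$ are completely join-preserving (they send the empty join to $\bot$ because $G$ does, and preserve non-empty joins coordinatewise).

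Next I would compute the least fixed point explicitly. For fixed $a \in M$, let $H_a(y) := G_1(a) \vee G_2(y)$, so that $\mu y.G(a,y) = \mu y.H_a(y)$. Since $G_2$ is completely join-preserving (and in particular Scott-continuous), $H_a$ is Scott-continuous. Kleene's theorem then gives $\mu y.H_a(y) = \bigvee_{n \in \omega} H_a^n(\bot)$, and an easy induction shows $H_a^{n+1}(\bot) = \bigvee_{k \le n} G_2^k(G_1(a))$, so
\[
\mu y.G(a,y) = \bigvee_{k \in \omega} G_2^k(G_1(a)).
\]
This join exists in $\mathbf{L}$ by completeness, so $\mu y.G(-,y)$ is defined everywhere on $M$.

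Finally, the map $a \mapsto G_2^k(G_1(a))$ is completely join-preserving for each $k$ as a composition of completely join-preserving maps. Thus $\mu y.G(-,y)$ is a pointwise join of completely join-preserving maps, which is itself completely join-preserving by interchanging the order of the two joins: for any $S \subseteq M$,
\[
\mu y.G\Bigl(\bigvee S, y\Bigr) = \bigvee_{k} G_2^k\Bigl(G_1\Bigl(\bigvee S\Bigr)\Bigr) = \bigvee_{k} \bigvee_{s \in S} G_2^k(G_1(s)) = \bigvee_{s \in S}\mu y.G(s,y).
\]
The non-obvious content of the lemma lies in the initial decomposition; once $G = G_1(-) \vee G_2(-)$ is in hand, the remainder is a routine Kleene computation together with a standard interchange of joins.
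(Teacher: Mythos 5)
Your proof is correct. Note that the paper itself does not prove this lemma; it imports it verbatim from \cite{muALBA}, so there is no in-paper argument to match, but the closest analogue in the paper is the pair Lemma~\ref{lem:heart} and Lemma~\ref{lem:2.1equiv}, where the same decomposition idea ($f(\,\cdot\,,\bot)$ is completely join-preserving) is combined with a \emph{transfinite} induction on the ordinal approximants $f^\alpha(\bigvee S,\bot)$. You instead exploit complete join-preservation twice: once to split $G(a,b)=G_1(a)\vee G_2(b)$ via $(a,b)=(a,\bot)\vee(\bot,b)$, and once more to note that $G_2$ is Scott-continuous, so Kleene's theorem closes the fixed point at stage $\omega$ and yields the explicit formula $\mu y.G(a,y)=\bigvee_{k\in\omega}G_2^k(G_1(a))$; complete join-preservation of $a\mapsto\mu y.G(a,y)$ is then a routine interchange of joins. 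This buys a closed form and avoids transfinite iteration altogether, at the (mild) cost of invoking Kleene and the standard identification of the meet of pre-fixed points with the least fixed point (Knaster--Tarski), which you use implicitly when equating the lemma's $\bigwedge\{x\mid G(a,x)\le x\}$ with the Kleene join; it would be worth one sentence to make that identification explicit. Two further points you handle correctly and should keep: the empty-join case (both of $G_1,G_2$ and of $S\subseteq M$) works precisely because $G$ preserves the empty join as a map from the product, and the ``defined everywhere'' clause is in fact immediate from completeness of $\mathbf{L}$, independently of your formula.
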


\begin{lemma}\label{lem:droppingstar} Let $\A$ be a mu-algebra of the second kind and let $\tau$ be an order type over $n$. Let
$\psi(\overline{x},X) \in \Tm_1$ such that $\psi(\overline{x},X)$ is completely
$\bigvee$-preserving in $(\overline{x},X) \in (\A^\delta)^\tau \times \A^\delta$.
If $\A^\delta \models [(\nomi \le \mu^* X. \psi(\overline{\nomj_i}^\tau,X)
\amp \nomj^{\tau_i} \le^{\tau_i} \overline{\phi}_i)\Rightarrow \nomi \le \cnomm]$, then
$\A^\delta \models [\nomi \le \mu X. \psi(\overline{\phi},X) \Rightarrow \nomi \le \cnomm]$.
\end{lemma}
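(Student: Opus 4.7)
The plan is to adapt the proof of Proposition~\ref{prop:approxsound} for the approximation rule ($\mu^\tau$-A-R), replacing $\mu^*$ by $\mu$ on the input side. I fix an arbitrary assignment $V$ on $\A^\delta$ with $V(\nomi) \le \mu X.\psi(V(\overline{\phi}), X)$ and aim to show $V(\nomi) \le V(\cnomm)$, the main ingredient being a conversion of the $\mu$-bound into a $\mu^*$-bound so that the hypothesis applies.

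The first substantive step is to invoke Lemma~\ref{lem:2.1orig} with $\mathbf{L} = \A^\delta$ and $\mathbf{M} = (\A^\delta)^\tau$ against the completely $\bigvee$-preserving map $\psi$, obtaining that $\overline{x} \mapsto \mu X.\psi(\overline{x}, X)$ is completely $\bigvee$-preserving as a function $(\A^\delta)^\tau \to \A^\delta$. Since $\A^\delta$ is a perfect distributive lattice, $V(\overline{\phi})$ is the join of those $\overline{j} \in \jty((\A^\delta)^\tau)$ lying below it; distributing $\mu X.\psi(-, X)$ over this join and then exploiting the complete join-primeness of $V(\nomi)$ produces some $\overline{j_0} \in \jty((\A^\delta)^\tau)$ with $\overline{j_0} \le^\tau V(\overline{\phi})$ and $V(\nomi) \le \mu X.\psi(\overline{j_0}, X)$. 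By the description of join-irreducibles in twisted products from Section~\ref{sec:language}, $\overline{j_0}$ is trivial at every coordinate except one, say $k$, where $(\overline{j_0})_k \in \jty((\A^\delta)^{\tau_k})$ and $(\overline{j_0})_k \le^{\tau_k} V(\overline{\phi})_k$.

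The bridge to the hypothesis is the observation that $\mu X.\psi(\overline{j_0}, X) \le \mu^* X.\psi(\overline{j_0}, X)$ on $\A^\delta$: the $\mu$-meet ranges over $\{\,a \in A^\delta \mid \psi(\overline{j_0}, a) \le a\,\}$ whereas the $\mu^*$-meet ranges over its subset $\{\,a \in A \mid \psi(\overline{j_0}, a) \le a\,\}$, making the $\mu^*$-meet at least as large. Hence $V(\nomi) \le \mu^* X.\psi(\overline{j_0}, X)$. Picking $\nomj^{\tau_k}$ fresh and letting $V'$ be the $\nomj^{\tau_k}$-variant of $V$ with $V'(\nomj^{\tau_k}) = (\overline{j_0})_k$, both antecedent conjuncts of the hypothesis (for index $k$) hold at $V'$, so the hypothesis yields $V'(\nomi) \le V'(\cnomm)$, equivalently $V(\nomi) \le V(\cnomm)$. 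The main step to watch is this bridge inequality: we rely on $\mu \le \mu^*$ (which holds unconditionally) rather than equality — equality is precisely what Lemma~\ref{lem:2.1equiv} guarantees under the type-2 assumption that is not available here, which is why the present lemma is one-directional.
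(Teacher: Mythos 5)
Your proposal is correct and follows essentially the same route as the paper's own proof: join-density of $\jty((\A^\delta)^\tau)$, Lemma~\ref{lem:2.1orig} to distribute $\mu X.\psi(-,X)$ over the join, join-primeness of $V(\nomi)$ to isolate $\overline{j_0}$, and then the one-directional comparison $\mu X.\psi(\overline{j_0},X)=\bigwedge\{a\in A^\delta\mid \psi(\overline{j_0},a)\le a\}\le\bigwedge\{a\in A\mid \psi(\overline{j_0},a)\le a\}=\mu^* X.\psi(\overline{j_0},X)$ before invoking the hypothesis. Your explicit handling of the fresh nominal and the variant assignment, and your closing remark on why only $\mu\le\mu^*$ (not equality) is needed here, are accurate elaborations of steps the paper leaves implicit.
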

\begin{proof}
Assume that $\A^\delta \models [(\nomi \le \mu^* X. \psi(\overline{\nomj_i}^\tau,X) \amp \nomj^{\tau_i} \le^{\tau_i} \overline{\phi}_i)\Rightarrow \nomi \le \cnomm]$ and suppose $V(\nomi) \le V(\mu X.\psi(\overline{\phi},X))$. %That is, $V(i) \le \bigwedge \{\, a \in A^\delta \mid \psi(\varphi,a) \le a \,\}$.
By the join-density of $\jty(\A^\delta)$ we have that
$\overline{\varphi}=\bigvee\{\, \overline{j} \mid \overline{j} \in \jty((\A^\delta)^\tau), \overline{j} \le \overline{\phi}\,\}$. Thus
$\mu X.\psi(\overline{\phi},X)=\mu X. \psi( \bigvee\{\,\overline{j}\mid \overline{j}\le \overline{\phi}\,\},X)$.
Now by Lemma~\ref{lem:2.1orig} with $M=(\A^\delta)^\tau$ and $L=\A^\delta$ we get
$\mu X.\psi(\overline{\phi},X)=\bigvee\{\, \mu X.\psi(\overline{j},X) \mid \overline{j} \le \overline{\phi}\,\}$.
Since $V(\nomi) \in \jty(\A^\delta)$, it is completely join-prime and so
$V(\nomi) \le \mu X.\psi(\overline{\phi},X)=\bigvee\{\, \mu X.\psi(\overline{j},X) \mid \overline{j} \le \overline{\phi}\,\}$
implies that there exists some $\overline{j_0}$ with $\overline{j_0} \le \overline{\phi}$ such that
$V(\nomi)  \le \mu X.\psi(\overline{j_0},X) =\bigwedge \{\,a \in A^\delta \mid \psi(\overline{j_0},a)\le a \,\}
\le \bigwedge \{\, a \in A \mid \psi(\overline{j_0},a)\,\}= \mu^* X.\psi(\overline{j_0},X)$.
Thus by the assumption, $V(\nomi) \le V(\cnomm)$.
\end{proof}

\begin{prop}\label{prop:droppingstar}
Let $\A$ be a mu-algebra of the second kind and $\phi \le \psi$ an $\Tm_1$-inequality. Suppose
that a proper run of $\mu^*$-ALBA succeeds on $\phi \le\psi$, producing {\upshape\texttt{pure}}$(\phi^* \le \psi^*)$.
If $\A^\delta \models$ {\upshape\texttt{pure}}$(\phi^*\le \psi^*)$ then $\A^\delta \models \phi \le \psi$.
\end{prop}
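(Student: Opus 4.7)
The plan is to argue via the right arm of a U-shaped argument analogous to the tame canonicity proof of Theorem~\ref{thm:tamecanonicity}, but with a crucial twist at each reversal of a fixed point approximation rule where $\mu^*$ and $\nu^*$ are replaced by $\mu$ and $\nu$. First, by the join-density of $\jty(\A^\delta)$ and the meet-density of $\mty(\A^\delta)$, to show $\A^\delta \models \phi \le \psi$ it suffices to show $\A^\delta \models \mathsf{FA}(\phi \le \psi)$ under arbitrary (not necessarily admissible) assignments, where $\mathsf{FA}(\phi \le \psi) := \forall \nomi \forall \cnomm[\nomi \le \phi \amp \psi \le \cnomm \Rightarrow \nomi \le \cnomm]$. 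This quasi-inequality is phrased with $\mu$ and $\nu$, not with their starred counterparts.

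Let $\Gamma_0 = \mathsf{FA}(\phi^* \le \psi^*) \leadsto \Gamma_1 \leadsto \cdots \leadsto \Gamma_n = \texttt{pure}(\phi^* \le \psi^*)$ denote the steps of the proper $\mu^*$-ALBA run. I would proceed by descending induction on $i$, showing that $\A^\delta$ validates a variant $\widetilde{\Gamma}_i$ of $\Gamma_i$ in which every $\mu^*$ or $\nu^*$ that traces back to a binder of $\phi \le \psi$ already handled by an approximation rule in the later portion $\Gamma_i \leadsto \cdots \leadsto \Gamma_n$ has been reverted to the corresponding $\mu$ or $\nu$. For any rule other than ($\mu^\tau$-A-R) or ($\nu^\tau$-A-R), the passage from $\widetilde{\Gamma}_{i+1}$ back to $\widetilde{\Gamma}_i$ uses the invertibility of the rule on $\A^\delta$ under arbitrary assignments, which holds as in the proof of Proposition~\ref{prop:ordinarysoundness} --- these rules do not touch binders, so their invertibility is indifferent to whether the binders in context are $\mu$ or $\mu^*$. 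For ($\mu^\tau$-A-R), the rule produces a family of pure quasi-inequalities indexed by coordinate $i$, all of which hold by the inductive hypothesis; Lemma~\ref{lem:droppingstar} then allows us to conclude the quasi-inequality in which the antecedent $\nomi \le \mu^* X.\psi(\ophi/\overline{x}, X)$ is replaced by $\nomi \le \mu X.\psi(\ophi/\overline{x}, X)$. The rule ($\nu^\tau$-A-R) is handled using the order-dual of Lemma~\ref{lem:droppingstar}. Since the run is proper, every binder occurring in $\phi$ and $\psi$ is eventually processed by some approximation rule, so $\widetilde{\Gamma}_0 = \mathsf{FA}(\phi \le \psi)$, yielding $\A^\delta \models \mathsf{FA}(\phi \le \psi)$ and hence $\A^\delta \models \phi \le \psi$.

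The main obstacle is the bookkeeping required to formalise $\widetilde{\Gamma}_i$ precisely: we must track which $\mu^*$ and $\nu^*$ symbols have been reverted at each stage and verify that this assignment of binders is stable under the non-approximation rule reversals. A more subtle issue is the application of Lemma~\ref{lem:droppingstar}: the lemma's hypothesis corresponds to a single coordinate $i$ with a specific $\nomj^{\tau_i}$, whereas the disjunctive output $\bigiamp_{i=1}^n$ of ($\mu^\tau$-A-R) distributes over the surrounding conjunction and implication into a family of quasi-inequalities indexed by $i$. One must verify that these are exactly the per-coordinate pure quasi-inequalities the lemma requires, with the implicit universal quantification over the fresh nominals and co-nominals $\nomj^{\tau_i}$ providing the parameter range the lemma needs to pick out the appropriate join-irreducible witness $\overline{j_0}$.
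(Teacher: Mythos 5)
Your proposal is correct and follows essentially the same route as the paper's proof: reverse the run of $\mu^*$-ALBA on $\A^\delta$ under arbitrary assignments, invoking the invertibility of the ordinary rules as in Proposition~\ref{prop:ordinarysoundness} and Lemma~\ref{lem:droppingstar} (and its dual) to replace each reversed application of ($\mu^\tau$-A-R) or ($\nu^\tau$-A-R) by the corresponding unstarred binder, with properness of the run guaranteeing that every $\mu^*$ and $\nu^*$ is so reverted. Your explicit bookkeeping of the partially reverted quasi-inequalities $\widetilde{\Gamma}_i$ and your remark on matching the disjunctive, existentially quantified output of the approximation rules to the per-coordinate hypotheses of Lemma~\ref{lem:droppingstar} make precise details that the paper's proof leaves implicit.
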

\begin{proof}
The set of quasi-inequalities $\texttt{pure}(\phi^* \le \psi^*)$ is obtained through a finite number of rule applications of $\mu^*$-ALBA.
As detailed by Proposition~\ref{prop:ordinarysoundness}, all of the residuation, adjunction, ordinary approximation and Ackermann rules can be reversed while preserving validity on $\A^\delta$. Suppose that at some stage during the proper run of $\mu^*$-ALBA,
the inequality $\nomi \le \mu^* X.\psi(\overline{\phi}/\overline{x},X)$ is converted into
$\bigiamp_{i=1}^{n}( \exists \nomj^{\tau_i}[\nomi \leq \mu^* X.\psi( {\overline{\nomj}_i}^{\tau}/\overline{x}, X)\ \&\ \nomj^{\tau_i} \leq^{\tau_i} \phi_i])$. By Lemma~\ref{lem:droppingstar}, the disjunction $\bigiamp_{i=1}^{n}( \exists \nomj^{\tau_i}[\nomi \leq \mu^* X.\psi( {\overline{\nomj}_i}^{\tau}/\overline{x}, X)\ \&\ \nomj^{\tau_i} \leq^{\tau_i} \phi_i])$ can be converted into
$\nomi \le \mu X.\psi(\overline{\phi},X)$.
\end{proof}

We are now ready to present the final canonicity result.

\begin{theorem}{\upshape\textbf{(Canonicity)}} \label{thm:truecanonicity}
Let $\A$ be a mu-algebra of the second kind and let $\phi \le \psi$ be an $\Tm_1$-inequality
on which a proper run of $\mu^*$-ALBA succeeds.
If $\A \models \phi \le \psi$ then $\A^\delta \models \phi \le \psi$.
\end{theorem}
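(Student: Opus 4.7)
The plan is to prove the theorem via the same U-shaped argument used for tame canonicity (Theorem~\ref{thm:tamecanonicity}), with the right-hand arm of the U closed using the proper-run specific Proposition~\ref{prop:droppingstar} rather than Proposition~\ref{prop:ordinarysoundness}. Concretely, I aim to establish
\begin{align*}
\A \models \phi \le \psi \;&\Longleftrightarrow\; \A^\delta \models_\A \phi^* \le \psi^* \\
&\Longleftrightarrow\; \A^\delta \models_\A \texttt{pure}(\phi^* \le \psi^*) \\
&\Longleftrightarrow\; \A^\delta \models \texttt{pure}(\phi^* \le \psi^*) \\
&\Longrightarrow\; \A^\delta \models \phi \le \psi,
\end{align*}
from which the theorem is immediate. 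Note that, in contrast to the tame case, the final step is only an implication, which is precisely why the converse (refutation preservation) fails in general.

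For the first biconditional I would argue by structural induction that, under any admissible assignment $V$, the term functions $\phi^{\A}$ and $(\phi^*)^{\A^\delta}$ take the same values on tuples from $A$. The only nontrivial case is a fixed point binder $\mu X.\chi(X, \overline{a})$: its value in $\A$ is the meet (taken in $\A$) of the prefixed points of $\chi(\cdot,\overline{a})$ lying in $A$, and since $\A$ is of the second kind this meet exists and is itself a prefixed point by Lemma~\ref{lem:type2=>type1}, hence belongs to $A$. The value of $\mu^* X.\chi(X, \overline{a})$ in $\A^\delta$ is by definition the meet in $\A^\delta$ of the \emph{same} set of elements of $A$; since this meet is witnessed within $A$, the two values coincide.

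The second biconditional is supplied by the soundness of $\mu^*$-ALBA with respect to admissible validity (Proposition~\ref{prop:admissiblesoundness}), which is applicable precisely because $\A$ is of the second kind---this is what licences the fixed point approximation rules ($\mu^\tau$-A-R) and ($\nu^\tau$-A-R) that are invoked during a proper run. The third biconditional is a base-case observation: $\texttt{pure}(\phi^* \le \psi^*)$ contains no propositional variables, so admissible and arbitrary assignments are indistinguishable on the formulas it contains.

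The final implication is exactly Proposition~\ref{prop:droppingstar}. No real obstacle remains once that proposition is in hand, but it is worth flagging that it is the only non-reversible step: Lemma~\ref{lem:droppingstar} converts $\mu^*$ back to $\mu$ in the antecedent of a quasi-inequality at the cost of replacing a biconditional by a one-way implication. This asymmetry is the structural reason why the theorem yields only $\A\models\phi\le\psi \Rightarrow \A^\delta\models\phi\le\psi$, and not the converse that would be needed for transfer of refutations from the canonical extension back to $\A$.
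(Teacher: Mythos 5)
Your proposal is correct and follows essentially the same route as the paper: the same U-shaped chain of equivalences, with the second step justified by Proposition~\ref{prop:admissiblesoundness}, the third by purity, and the final one-way implication by Proposition~\ref{prop:droppingstar}. Your expanded justification of the first equivalence (using Lemma~\ref{lem:type2=>type1} to see that the least prefixed point is attained in $A$, so the meets computed in $\A$ and $\A^\delta$ coincide) is a more explicit version of the argument the paper only sketches, and your remark on why the last step is not reversible matches the paper's discussion.
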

\begin{proof} The proof is similar to that of Theorem~\ref{thm:tamecanonicity}.
\begin{eqnarray}
&&\A \models \phi \leq \psi\label{eq:truecanonicity:1}\\
&\Longleftrightarrow  &\A^\delta \models_{\A} \phi^{*} \leq \psi^{*}\label{eq:truecanonicity:2}\\
&\Longleftrightarrow  &\A^{\delta} \models_{\A} \texttt{pure}(\phi^{*} \leq \psi^{*})\label{eq:truecanonicity:3}\\
&\Longleftrightarrow  &\A^{\delta} \models \texttt{pure}(\phi^{*} \leq \psi^{*})\label{eq:truecanonicity:4}\\
&\Longrightarrow  &\A^{\delta} \models \phi\leq \psi\label{eq:truecanonicity:5}
\end{eqnarray}
The equivalence of \eqref{eq:truecanonicity:1} and \eqref{eq:truecanonicity:2} follows as for Theorem~\ref{thm:tamecanonicity}.
For the equivalence of \eqref{eq:truecanonicity:2} and \eqref{eq:truecanonicity:3} we  apply the inductive argument from
Proposition~\ref{prop:admissiblesoundness}. 	
The equivalence of \eqref{eq:truecanonicity:3} and \eqref{eq:truecanonicity:4} follows from the fact that $\texttt{pure}(\varphi^* \le \psi^*)$ contains no propositional variables
and hence admissible validity and validity coincide for it.
The final implication is the statement of Proposition~\ref{prop:droppingstar}.
\end{proof}

%\marginpar{\raggedright \tiny AC: should we make this a numbered remark?}
Lastly, we make an observation regarding mu-inequalities with no occurrences of fixed point binders. If $\phi \le \psi$ does not
contain any fixed point binders, and a tame run of $\mu^*$-ALBA succeeds on $\phi \le \psi$, then $\phi \le \psi$ will be tame
canonical. That is, $\A \models \phi \le \psi$ if and only if $\A^\delta \models \phi^* \le \psi^*$.
However, we have that $\phi = \phi^*$ and $\psi=\psi^*$ and so $\A \models \phi \le \psi$ if and only if $\A^\delta \models \phi \le \psi$.
Likewise, if a proper run of $\mu^*$-ALBA succeeds on $\alpha \le \beta$ where this mu-inequality has no occurrences of fixed point binders,
Proposition~\ref{prop:ordinarysoundness} can be applied to show the converse of (21) $\Rightarrow$ (22) in
Theorem~\ref{thm:truecanonicity} and hence $\A \models \alpha \le \beta$ if and only if $\A^\delta \models \alpha \le \beta$.

\section{Canonicity of the restricted and tame inductive mu-inequalities}\label{sec:canon-of-synclasses}

In this section we argue that $\mu^*$-ALBA successfully purifies all restricted inductive mu-inequalities by means of proper runs, and all tame inductive mu-inequalities by means of tame runs. Once we have established these claims, the next theorem will then follow from Theorems \ref{thm:tamecanonicity} and \ref{thm:truecanonicity}:

\begin{theorem}\label{thm:rstrctd:tm:ind:canonical}
All restricted inductive mu-inequalities are canonical and all tame inductive mu-inequalities are tame canonical.
\end{theorem}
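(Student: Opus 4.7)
The plan is to establish, for every tame (respectively, restricted) inductive mu-inequality $\phi \leq \psi$, that some tame (respectively, proper) run of $\mu^*$-ALBA succeeds on $\phi \leq \psi$. Combined with Theorems~\ref{thm:tamecanonicity} and~\ref{thm:truecanonicity}, this yields the two canonicity claims. The core of the argument is a winning strategy whose schedule is dictated by the decomposition of each $\epsilon$-critical branch into its outer-skeleton ($P_3$), inner-skeleton ($P_2$) and PIA ($P_1$) segments.

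After preprocessing, first-approximation, and converting $\mu, \nu$ into $\mu^*, \nu^*$, the strategy works top-down along each critical branch. In the $P_3$ phase, $\Delta$-adjoint nodes are eliminated by the splitting rules $(\vee\mathrm{LA})$ and $(\wedge\mathrm{RA})$, and SLR nodes by the adjunction/ordinary-approximation rules, each such application introducing a fresh nominal or co-nominal that bounds the subformula from below or above. In the $P_2$ phase, SLA nodes are handled by adjunction, SLR nodes again by ordinary approximation, and binders (necessarily of the form $+\mu$ or $-\nu$) by the fixed-point approximation rules $(\mu^\tau\text{-A-R})$ and $(\nu^\tau\text{-A-R})$; the applicability of the latter is guaranteed by Lemma~\ref{IF:Are:Adjoint}, since the order-theoretic shape of an inner-skeleton segment matches exactly the recursive clauses defining the inner formulas of Section~\ref{sec:innerformulas}. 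For tame inductive inequalities, Definition~\ref{def:rstrctd:tm:indctv}(4) forbids binders on critical branches entirely, so no fixed-point approximation rule is ever applied to such a branch --- the run is therefore tame. For restricted inductive inequalities, every binder occurrence must (by Definition~\ref{def:rstrctd:tm:indctv}(3)) lie on a critical branch and so is dispatched by $(\mu^\tau\text{-A-R})$ or $(\nu^\tau\text{-A-R})$, yielding a proper run.

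In the $P_1$ phase each PIA node is, by Table~\ref{Skeleton:PIA:Node:Table}, either a binder or a connective acting as a right (respectively, left) residual in the coordinate carrying the branch; residuation and adjunction rules therefore peel away the $P_1$ part, moving the material on the other side of each SRR node into a bounding formula for the target variable. Condition (GB2) guarantees that this "other side" is a mu-sentence and, under $\epsilon^\partial$, contains no variables to be solved for, while ($\Omega$-CONF) further restricts any variables that do occur there to be strictly smaller than the current one under $<_\Omega$. For restricted inductive inequalities (NB-PIA) ensures $P_1$ is binder-free, so only the ordinary residuation and adjunction rules are needed; for tame inductive inequalities the only binders that can occur anywhere are $+\nu$ and $-\mu$, which in $P_1$ act as right/left adjoints just like the other PIA connectives. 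After $P_1$ each critical propositional variable is isolated in the antecedent of its quasi-inequality, and the Ackermann rules are applied in an order consistent with $<_\Omega$ (for the restricted case) or arbitrarily (for the tame case, where $\Omega = \varnothing$); at the elimination of $p_i$, every later-to-be-eliminated variable has already been removed from the bounding formulas, so no Ackermann restriction is violated.

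The main obstacle is the bookkeeping needed to verify that the formulas assembled during $P_3$, $P_2$ and $P_1$ really do satisfy the syntactic-closure/openness and polarity conditions demanded by Definition~\ref{def:synopensynclosed} at the moment each Ackermann rule is invoked. This is proved by an induction on the structure of the critical branch, tracking how each rule application affects the syntactic shape and sign of every occurrence of $\mu^*$ and $\nu^*$, and of every nominal and co-nominal introduced so far. The restrictions that in the tame case only $+\nu$ and $-\mu$ binders may occur, and that in the restricted case no binders occur in $P_1$, together with (NL) in the restricted case (which forbids further live branches to the side of an SLR node in $P_2$, keeping the parameters "clean"), is exactly what is needed to ensure that $\mu^*$ occurrences remain in positive positions of syntactically closed formulas and $\nu^*$ occurrences in positive positions of syntactically open ones, so that (RA) and (LA) apply.
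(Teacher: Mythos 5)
Your proposal takes essentially the same route as the paper's Section~\ref{sec:canon-of-synclasses}: reduce the theorem to showing that tame (resp.\ proper) runs of $\mu^*$-ALBA succeed on tame (resp.\ restricted) inductive inequalities, drive the run by the $P_3$/$P_2$/$P_1$ decomposition of critical branches, justify the fixed point approximation rules through the inner formulas, verify the Ackermann side conditions by tracking syntactic closure/openness, and conclude via Theorems~\ref{thm:tamecanonicity} and~\ref{thm:truecanonicity}. Two points, however, need tightening. First, your remark that in the tame case the binders $+\nu$ and $-\mu$ occurring in $P_1$ ``act as right/left adjoints just like the other PIA connectives'' is both unnecessary and unexecutable: $\mu^*$-ALBA has no adjunction rules for $\mu^*$ or $\nu^*$ (only $\mu$-ALBA does), and by Definition~\ref{def:rstrctd:tm:indctv}(4) no binder lies on any critical branch of a tame inductive tree, so in a tame run binders are never manipulated at all --- they simply ride along inside non-pure formulas, which is exactly why the paper needs the additional polarity lemma (all $\mu^*$ positive on left-hand sides, etc.) to upgrade ``almost closed/open'' from Lemma~\ref{lem:syn:lmst:opn:clsd} to the closure/openness required by (RA) and (LA). Second, your claim that the surfaced inner-skeleton segment ``matches exactly'' the recursive clauses of the inner formulas is where the real work lies: the paper proves this as Lemma~\ref{Skeleton:Are:IFDia:Lemma} (feeding Proposition~\ref{Approx:Rules:Applic:Prop}) by induction on skeleton depth, and the condition (NL) is used there precisely to make the side parameters $\oga$ \emph{constant} sentences; this constancy is what makes the conclusions of ($\mu^{\tau}$-A-R) and ($\nu^{\tau}$-A-R) pure, so that after these rules every binder sits only in pure inequalities, the non-pure inequalities handed to the Ackermann rules are binder-free (by (NB-PIA) and (GB1)), and Lemma~\ref{lem:syn:lmst:opn:clsd} then yields their syntactic closure/openness --- rather than (NL) serving the binder-polarity bookkeeping to which you attribute it. Neither issue derails the argument, but both should be stated as in the paper to make the run's success rigorous.
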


%habe to show that the weakenings imposed on $\mu$-ALBA to obtain $\mu^*$-ALBA together with the constraints on runs which make them proper and pure, correspond to the restrictions imposed on recursive %$\mu$-inequalities to make them firstly inductive and then, respectively, restricted and tame.

As far as tame inductive inequalities are concerned, the proof is almost verbatim the same as in \cite[Section 10.1]{DistMLALBA}. Indeed, tame inductive inequalities are just (a subset of the) inductive inequalities from the language of intuitionistic modal logic with some fixed point binders `along for the ride' on the non-critical branches. Thus the binders never need to be handled by the algorithm, so the runs are guaranteed to be tame and exactly the same strategy employed in \cite{DistMLALBA} is sufficient. The only point that needs to be checked is that the side conditions of the Ackermann rules pertaining to syntactic openness and closure are met when these rules need to be applied.  This follows from the next two lemmas:

\begin{lemma}\label{lem:syn:lmst:opn:clsd}
If $\mu^*$-ALBA is applied to any $\mathcal{L}_1$-inequality then, during the whole run, every inequality in every antecedent of every produced quasi-inequality is either pure, or has a syntactically almost closed left-hand side and a syntactically almost open right-hand side. Consequently, if a non-pure inequality contains no fixed point binders it has a syntactically closed left-hand side and a syntactically open right-hand side.
\end{lemma}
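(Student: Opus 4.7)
The plan is to prove the main claim by induction on the number of $\mu^*$-ALBA rule applications, and then to derive the final ``consequently'' clause by observing that the grammars for syntactically (almost) open/closed formulas differ only in the clauses for $\mu^*$ and $\nu^*$, so they coincide in the absence of fixed point binders. For the base case (immediately after preprocessing and the first approximation (FA)) the antecedent of the resulting quasi-inequality consists of the inequalities $\nomj \le \phi^*$ and $\psi^* \le \cnomm$, where $\phi^*,\psi^*$ are obtained from $\mathcal{L}_1$-formulas by replacing every $\mu$ with $\mu^*$ and every $\nu$ with $\nu^*$. Such formulas contain no nominals, no co-nominals, and no occurrences of $\Db$ or $\Bb$, so a routine simultaneous structural induction on Definition~\ref{def:synopensynclosed} establishes that they are \emph{simultaneously} syntactically almost open and syntactically almost closed. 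Combined with the fact that nominals belong to the almost-closed class and co-nominals to the almost-open class, both antecedent inequalities satisfy the invariant.

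For the inductive step I would verify that each rule of $\mu^*$-ALBA preserves the invariant on the antecedent. The residuation and adjunction rules reduce to direct unfoldings of Definition~\ref{def:synopensynclosed}: for example, if $\phi \le \chi \to \psi$ has almost closed LHS and almost open RHS, then by the recursive clause for $\to$ on the open side, $\chi$ is almost closed and $\psi$ is almost open, so the output of ($\rightarrow$RR), namely $\phi \wedge \chi \le \psi$, preserves the invariant. The ordinary approximation rules each produce one pure inequality (involving only the fresh nominal and co-nominal, hence trivially satisfying the invariant) together with auxiliary inequalities whose sides inherit their syntactic shape from the parent; e.g., ($\rightarrow$Appr) applied to $\chi \to \phi \le \cnomm$ produces the pure $\nomj \to \cnomn \le \cnomm$ together with $\nomj \le \chi$ and $\phi \le \cnomn$, all satisfying the invariant.

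For the fixed point approximation rule ($\mu^\tau$-A-R), the new inequality $\nomi \le \mu^* X.\psi(\overline{\nomj}_i^{\tau}/\ox, X)$ is pure because, by the side conditions on the rule, every propositional and free fixed-point variable of $\psi$ lies among $\ox$ and $X$, and all of $\ox$ has been substituted by nominals or co-nominals. For the additional inequality $\nomj^{\tau_i} \le^{\tau_i} \phi_i$ one must trace the almost-openness of the parent $\mu^* X.\psi(\ophi/\ox,X)$: the rule requires $\psi$ to be positive in $x_i$ when $\tau_i = 1$ and negative when $\tau_i = \partial$, so that each $\phi_i$ substituted at $x_i$ in an almost-open formula is itself almost open (when $\tau_i = 1$) or almost closed (when $\tau_i = \partial$), which in either case aligns with the nominal/co-nominal polarity of $\nomj^{\tau_i}$ and satisfies the invariant. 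The rule ($\nu^\tau$-A-R) is handled dually. Finally, the Ackermann rules carry side conditions requiring the $\alpha_i$ to be (genuinely) syntactically closed, and the $\beta_j,\gamma_j$ to be syntactically closed and open respectively --- all strictly stronger than the invariant --- so standard positive/negative substitution arguments show that the output $\beta_j(\bigvee_i \alpha_i/p) \le \gamma_j(\bigvee_i \alpha_i/p)$ has syntactically closed LHS and syntactically open RHS, in particular fitting the invariant. The main obstacle will be the sign-tracking inside the fixed point approximation cases, where the almost-openness of $\mu^* X.\psi(\ophi/\ox,X)$ must be propagated through the polarity of $\psi$ in each $x_i$ to determine the almost-open/closed status of each $\phi_i$ in a way compatible with the $\tau_i$-flavour of the introduced nominal or co-nominal.
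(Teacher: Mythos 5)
Your proposal is correct and follows essentially the same route as the paper: induction on the number of rule applications, with the base case resting on the observation that starred $\mathcal{L}_*$-formulas contain no nominals, co-nominals, $\Diamondblack$ or $\blacksquare$ and are hence simultaneously almost open and almost closed, and with the fixed point approximation rules handled by noting that the first produced inequality is pure and that the polarity of $\phi_i$ in the almost-open parent formula matches the $\tau_i$-flavour of the fresh (co-)nominal. Your explicit justification of the ``consequently'' clause (the two grammars differ only in the $\mu^*/\nu^*$ clauses) and your treatment of the residuation and Ackermann cases merely fill in details the paper leaves implicit.
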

\begin{proof}
The proof is by induction on the application of the rules of $\mu^*$-ALBA. Suppose $\mu^*$-ALBA is run on an $\mathcal{L}_1$-inequality. As discussed above, preprocessing turns this into a finite number of inequalities $\eta \leq \beta$. Starring  and first approximation produce $(\nomi \leq \eta^*\ \&\ \beta^* \leq \cnomm)\Rightarrow \nomi\leq \cnomm$. We claim that $\nomi\leq \eta^*$ and $\beta^*\leq \cnomm$ satisfy the statement of the lemma. Indeed, $\nomi$ and $\cnomm$ are, respectively, syntactically closed and open. As $\eta^*, \beta^* \in \mathcal{L}_*$, they do not contain any nominals, co-nominals, $\Diamondblack$ or $\blacksquare$, and are therefore both syntactically almost open and syntactically almost closed.

The induction now proceeds by showing that the desired properties are invariant under the application of the rules of $\mu^*$-ALBA. The most interesting cases are those for ($\mu^{\tau}$-A-R) and ($\nu^{\tau}$-A-R). We verify ($\mu^{\tau}$-A-R). Applied to an inequality $\nomi \leq \mu^* X.\psi(\ophi / \overline{x}, X, \oga/\oz)$, the rule ($\mu^{\tau}$-A-R) produces a disjunction of inequalities of the form
$\nomi \leq \mu^* X.\psi( {\overline{\nomj}_i}^{\tau}/\overline{x}, X, \oga/\oz)$ and $\nomj^{\tau_i} \leq^{\tau_i} \phi_i$. Since the $\oga$ are constant sentences, the first of these inequalities is pure. If $\tau_i = 1$, the second inequality is $\nomj \leq \phi_i$ where $\nomj$ is syntactically closed and $\phi_i$ is syntactically almost open since it occurs positively as a subformula of the syntactically almost open formula  $\mu^* X.\psi(\ophi / \overline{x}, X, \oga/\oz)$. On the other hand, if $\tau_i = \partial$, the second inequality is $ \phi_i \leq \cnomn$ where $\cnomn$ is syntactically open and $\phi_i$ is syntactically almost closed since it occurs negatively as a subformula of the syntactically almost open formula  $\mu^* X.\psi(\ophi / \overline{x}, X, \oga/\oz)$.
\end{proof}

\begin{lemma}
If $\mu^*$-ALBA is applied to any tame inductive $\mathcal{L}_1$ inequality, then, during the whole run, every inequality in every antecedent of every quasi-inequality is either pure or has a left-hand side (resp., right-hand side) in which all occurrences of $\mu^*$ are positive (resp., negative) and all occurrences of $\nu^*$ are negative (resp., positive).
\end{lemma}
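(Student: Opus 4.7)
The plan is to prove the invariant by induction on the number of rule applications in the $\mu^*$-ALBA run, in the same spirit as the preceding Lemma 9.3. The invariant I would maintain is this: for every inequality $\alpha \le \beta$ in the antecedent of any quasi-inequality produced during the run, either $\alpha \le \beta$ is pure, or the signed generation tree $+\alpha$ contains only $+\mu^*$- and $-\nu^*$-binder nodes, while $+\beta$ contains only $-\mu^*$- and $+\nu^*$-binder nodes. These two polarity conditions are precisely the ``LHS'' and ``RHS'' conditions of the statement, and they are dual to each other in the sense that flipping the root-sign of a term converts one into the other.

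For the base case, preprocessing (distribution, the splitting rules $(\vee\text{LA})$ and $(\wedge\text{RA})$, and monotone variable elimination) neither introduces new binders nor alters the signs of existing ones, so the tame inductive assumption — that only $+\nu$ and $-\mu$ binders occur in $+\phi$ and in $-\psi$ — is preserved. After starring and first approximation we obtain the quasi-inequality $(\nomi \le \phi^* \amp \psi^* \le \cnomm)\Rightarrow \nomi \le \cnomm$. The RHS $\phi^*$ inherits from $+\phi$ exactly the binders $+\nu^*$ and $-\mu^*$ in $+\phi^*$, which is the required RHS polarity. For $\psi^* \le \cnomm$, the condition on $-\psi$ yields $-\nu^*$ and $+\mu^*$ in $+\psi^*$ after flipping the root sign, which is exactly the required LHS polarity.

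For the inductive step, since the run is tame the rules $(\mu^\tau$-A-R$)$ and $(\nu^\tau$-A-R$)$ are not applied, so only the residuation, adjunction, ordinary approximation, and Ackermann rules must be checked. The residuation, adjunction and approximation rules either retain a subformula on the same side of the inequality (preserving all signs, and hence all binder polarities) or move a subformula across the inequality, flipping all signs within it; since the LHS and RHS polarity conditions are mutually dual, such a flip converts a LHS-admissible binder pattern into a RHS-admissible one and vice versa. All newly introduced material is pure (nominals, co-nominals, boxes, implications thereof) and hence vacuously satisfies both conditions.

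The main technical point is the Ackermann rules. In $(\text{RA})$, each $\alpha_i$ arises as the LHS of an inequality $\alpha_i \le p$ in the antecedent, so by induction the tree $+\alpha_i$ contains only $+\mu^*$ and $-\nu^*$ binders. Since $\beta_j$ is positive in $p$, the substitution of $\bigvee_{i}\alpha_i$ for $p$ takes place at $+$-signed positions, preserving the signs inside each $\alpha_i$, so $\beta_j(\bigvee_{i}\alpha_i / p)$ still satisfies the LHS invariant. Since $\gamma_j$ is negative in $p$, the substitution occurs at $-$-signed positions, flipping all signs inside the $\alpha_i$ to $-\mu^*$ and $+\nu^*$, which is exactly the RHS invariant. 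The argument for $(\text{LA})$ is order-dual. The main potential obstacle is the bookkeeping of sign-flips under substitution at opposite-polarity positions interacting with the dual LHS and RHS invariants, but this is precisely what makes the argument cohere.
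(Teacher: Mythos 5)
Your proposal is correct and takes essentially the same route as the paper: an induction on rule applications whose base case comes from the tame-inductive shape surviving preprocessing, starring and first approximation, and whose inductive step exploits the duality of the left- and right-hand polarity invariants under sign flips, with the fixed point approximation rules never arising. Your explicit polarity bookkeeping for (RA)/(LA) is just a spelled-out version of the paper's remark that moving subformulas across an inequality comes with a compensating change of polarity.
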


\begin{proof}
Preprocessing turns any tame inductive $\mathcal{L}_1$ inequality into a finite number of tame inductive inequalities $\eta \leq \beta$. By the definition of tame inductive inequalities, the only occurrences of binder nodes in $+\eta$ and $-\beta$ are $+\nu$ and $-\mu$. Thus starring and first approximation yields a quasi-inequality
\[
(\nomi\leq \eta^* \ \&\ \beta^* \leq \cnomm)\Rightarrow \nomi\leq \cnomm,
\]
where in $\eta^*$ (resp., $\beta^*$) all occurrences of  all occurrences of $\mu^*$ are negative (resp., positive) and all occurrences $\nu^*$ are positive (resp., negative). Thus this quasi-inequality satisfies the claim of the lemma. It is now sufficient to show that these conditions are invariant under the application of all $\mu^*$-ALBA rules. For the sake of the adjunction rules, note that all occurrences of binders are untouched and left on the same sides of inequalities.  The residuation rules move subformulas across the inequality but with an accompanying change in polarity, and the same consideration also deals with the Ackermann rules. The fixed point approximation rules ($\mu^{\tau}$-A-R) and ($\nu^{\tau}$-A-R)
%\marginpar{\raggedright \tiny W: Might need to adjust name of order type.}
are not applied, since tame inductive inequalities contain no binders on critical branches. The cases for the other approximation rules are easy to verify --- let us look explicitly only at ($\rightarrow$Appr). By assumption, in the premise $\chi \rightarrow \phi \leq \cnomm$, all occurrences of $\mu^*$ are positive and all occurrences of $\nu^*$ are negative. Thus, in $\nomj \rightarrow \cnomn \leq \cnomm \; \amp \; \nomj \leq \chi \; \amp \; \phi \leq \cnomm$, the conclusion of the rule, the inequality $\nomj \rightarrow \cnomn$  is pure, the righthand side of $\nomj \leq \chi$ contains  $\mu^*$ only negatively and $\nu^*$  only positively, while the lefthand side of $\phi \leq \cnomm$  contains  $\mu^*$ only positively and $\nu^*$  only negatively.
\end{proof}

Now that we have established Theorem \ref{thm:rstrctd:tm:ind:canonical} for tame inductive mu-inequalities, we will focus on the restricted inductive mu-inequalities for the remainder of this section. In \cite[Section 9]{muALBA} it was proven that $\mu$-ALBA successfully purifies all recursive mu-inequalities. The restricted inductive mu-inequalities form a subclass of these, but $\mu^*$-ALBA is also a restricted version of $\mu$-ALBA. Consequently our argument will follow that in \cite{muALBA} very closely, but we will be at pains to show how the constraints built into the restricted inductive mu-inequalities allow us to still succeed with the restricted resources of $\mu^*$-ALBA. The reader might find it useful to refer to Example \ref{Examp:ALBA:Restr:Ind} while reading the following argument.

%\subsection{Preprocess, first approximation and approximation}
Let $\eta \leq \beta$ be an $(\Omega, \epsilon)$-inductive inequality. We proceed as in ALBA and preprocess this inequality by applying splitting and ($\top$) and ($\bot$) exhaustively. This might produce multiple inequalities, on each of which we proceed separately. On each such inequality, denoted again $\eta \leq \beta$,  we proceed to first approximation, which yields the following quasi-inequality:
\begin{equation}\label{First:Approx:Eq}
\forall \op\forall \nomi\forall \cnomm[(\nomi\leq \eta\ \&\ \beta\leq \cnomm)\Rightarrow \nomi\leq \cnomm].
\end{equation}
Because its consequent is always pure, we only concentrate on its antecedent. Since the outer skeletons of $\beta$ and $\eta$ are built exactly as the outer part of an inductive modal formula, the ordinary approximation rules can be applied so as to surface the inner skeleton.  So we can equivalently rewrite $\nomi\leq \eta\ \&\ \beta\leq \cnomm$ as the conjunction of a set of inequalities which, whenever they contain critical variables in the scope of fixed point binders occurring as skeleton nodes, are of the form
\begin{equation}
\nomi \leq \mu X.\psi'(\overline{p}) \quad \text{ \ and  \ } \quad \nu X.\phi'(\overline{p})\leq \cnomm,\label{Eq:Approx}
\end{equation}
where $\mu X.\psi'(\overline{p})$ and $\nu X.\phi'(\overline{p})$ are sentences. (For the critical branches which do not contain such fixed point binders, we further proceed by exhaustively applying the approximation rules as in ALBA in order to surface the PIA parts.)

In what follows, we call a generation tree \emph{non-trivially restricted $(\Omega, \epsilon)$-inductive} if it is restricted $(\Omega, \epsilon)$-inductive and contains at least one $\epsilon$-critical branch.

\begin{prop}\label{Approx:Rules:Applic:Prop}%${}$\\
\begin{enumerate}
\item The inequality $\nomi \leq \mu X.\psi'$ in (\ref{Eq:Approx}) is of the form $\nomi \leq \mu X.\psi(\ophi / \oy,X,  \oga /\oz)$, where $\mu X.\psi(\oy, X, \oz)$ is an
%ac1308
%$(\oy, \oz)$-\ifDia{\delta}
$(\oy, \oz)$-\ifDia{\tau}
formula for some order-type $\tau$ over $\oy$, and the $\oga$ are constant sentences;

\item the inequality $\nu X.\phi' \leq \cnomm$ in (\ref{Eq:Approx}) is of the form $\nu X.\phi(\opsi / \oy, X,  \oga / \oz)\leq \cnomm$, where $\nu X.\phi(\oy, X, \oz)$ is an
%ac1308
%$(\oy, \oz)$-\ifBox{\delta}
$(\oy, \oz)$-\ifBox{\tau}
formula for some order-type $\tau$ over $\oy$, and the $\oga$ are constant sentences.
\end{enumerate}
\end{prop}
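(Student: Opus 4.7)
The plan is to prove claim (1); claim (2) follows by a completely dual argument, exchanging positive and negative generation trees, $\mu^*$ and $\nu^*$, and joins and meets throughout. By Definition~\ref{def:rstrctd:tm:indctv}.3(b) every binder in a restricted $(\Omega, \epsilon)$-inductive inequality lies on some $\epsilon$-critical branch, and by (NB-PIA) no binder occurs in $P_1$; since binders do not appear in the outer skeleton at all, every binder sits in some $P_2$. After preprocessing, first approximation, and exhaustive application of the ordinary approximation rules to the $P_3$-nodes of each critical branch, every residual inequality whose corresponding critical branch carries a binder at the top of its $P_2$-portion has the shape $\nomi \leq \mu X.\psi'$ displayed in~(\ref{Eq:Approx}). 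Since $\mu^* X.(\cdot)$ is a top-level constructor for \ifDia{\tau}, the claim reduces to exhibiting a $(\oy, \oz)$-\ifDia{\tau \oplus 1} formula $\psi(\oy, X, \oz)$ together with sub-formulas $\ophi$ and constant sentences $\oga$ such that $\psi'(\overline{p}) = \psi(\ophi/\oy, X, \oga/\oz)$.

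The core of the argument is a simultaneous structural induction descending the remaining $P_2$-portions of critical branches inside the subtree rooted at $+\mu X.\psi'$. The inductive claim is that the subformula rooted at any node of some $P_2$-portion lying in this subtree has the form $\chi(\ophi/\oy, \oga/\oz)$, where $\chi$ is $(\oy, \oz)$-\ifDia{\tau} or $(\oy, \oz)$-\ifBox{\tau} according to whether the node is positive or negative, the $\ophi$ are the PIA-subformulas at the tops of the adjacent $P_1$-segments, and the $\oga$ are the non-branch subformulas at the SLR-nodes of $P_2$ in this subtree. The base cases are (i) the top of a $P_1$-segment, at which the whole PIA-subformula becomes a placeholder $y_i$ whose order-type $\tau_i$ is dictated by its polarity within the overall formula, and (ii) a non-branch subformula at an SLR-node, which becomes a $\pi(\oz)$-substitution. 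The inductive step matches each inner-skeleton constructor with a clause of Definition~\ref{IF:Definition}: SLA-nodes $+\Diamond, +\vee, -\Box, -\wedge$ produce $\Diamond\psi, \psi_1 \vee \psi_2, \Box\phi, \phi_1 \wedge \phi_2$; binder-nodes $+\mu, -\nu$ produce $\mu^* Y.\psi'$ and $\nu^* Y.\phi'$; and SLR-nodes $+\wedge, +-, -\vee, -\rightarrow$ produce $\pi \wedge \psi, \psi - \pi, \pi \vee \phi, \pi \rightarrow \phi$, with $\pi$ supplied by the non-branch subformula at that node. When the critical branch descends through the polarity-flipping side of a connective (the right argument of $+-$ or the left argument of $-\rightarrow$), the mutual recursion between \ifDia{\tau} and \ifBox{\tau^{\partial}} in Definition~\ref{IF:Definition} carries us into the $\psi^c$ or $\phi^c$ clauses, flipping $\tau$ to $\tau^{\partial}$ appropriately.

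The principal technical obstacle is to verify that the non-branch subformulas at SLR-nodes of $P_2$ are genuine constant $\mathcal{L}_*$-sentences, as required by the $\pi(\oz)$-positions in the IF-definition. Condition~(NL) supplies the absence of live branches and hence of free propositional variables, while the requirement in (GB3) that the non-branch subformula at any SLR-node in $P_2$ be a mu-sentence supplies the absence of free fixed-point variables. Together these yield the constant sentences $\oga$ demanded by the proposition. A final bookkeeping remark is that the IF-formula being constructed employs the starred binders $\mu^*, \nu^*$ of Definition~\ref{IF:Definition}, whereas the original inequality contains $\mu, \nu$; this discrepancy is absorbed by the starring step of preprocessing, which converts every $\mu$ and $\nu$ to $\mu^*$ and $\nu^*$ prior to first approximation.
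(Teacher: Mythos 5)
Your proposal is correct and follows essentially the same route as the paper: the paper factors the work into Lemma~\ref{Skeleton:Are:IFDia:Lemma}, proved by simultaneous induction on the skeleton depth of $+\psi'$ and $-\phi'$, matching each inner-skeleton connective to a clause of Definition~\ref{IF:Definition} and invoking (NL) together with the mu-sentence condition of (GB3) to make the $\oga$ constant sentences, which is exactly the induction you inline directly into the proposition. The only presentational difference is that the paper first records (via an argument analogous to \cite[Lemma 10.6]{DistMLALBA}) that preprocessing, first approximation and the ordinary approximation rules preserve the non-trivially restricted $(\Omega,\epsilon)$-inductive shape, a step you assert rather than argue, but at a comparable level of detail.
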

\begin{proof}
Notice that preprocessing, first approximation and ordinary approximation rules do not involve fixed points. Hence a proof very similar to that of \cite[Lemma 10.6]{DistMLALBA} proves that $+ \mu X.\psi'$ and $- \nu X.\phi'$ are non-trivially restricted $(\Omega, \epsilon)$-inductive. Hence the statement immediately follows from Lemma \ref{Skeleton:Are:IFDia:Lemma} below.
\end{proof}

\begin{lemma}\label{Skeleton:Are:IFDia:Lemma}
\begin{enumerate}
\item Let $\psi'$ be such that $+\psi'$ is non-trivially restricted $(\Omega, \epsilon)$-inductive, and the $P_3$-paths of all critical branches are of length 0. Then $\psi'$ is of the form $\psi(\ophi / \oy, \oX, \oga / \oz)$ where $\psi(\ox, \oz)$ is an $(\ox, \oz)$-\ifDia{\tau} formula, for $\ox = \oy \oplus \oX$ and some order-type $\tau$ over $\ox$, the $\ophi$ are sentences and the $\oga$ are constant sentences.  Moreover, if $\oy = (y_1, \ldots, y_n)$ then, for each $1 \leq i \leq n$, $+\phi_i$ is $\epsilon$-PIA if $\tau_i = 1$ and $-\phi_i$ is $\epsilon$-PIA if $\tau_i = \partial$. Finally $\epsilon^{\partial}(+ \psi(\ox, \oga / \oz))$.

\item Let $\phi'$ be such that $-\phi'$ is non-trivially restricted $(\Omega, \epsilon)$-inductive, and the $P_3$-paths of all critical branches are of length 0. Then $\phi'$ is of the form $\phi(\opsi / \oy, \oX, \oga / \oz)$ where $\phi(\ox, \oz)$ is an $(\ox, \oz)$-\ifBox{\tau} formula, for $\ox = \oy \oplus \oX$ and some order-type $\tau$ over $\ox$, the $\ophi$ are sentences and the $\oga$ are constant sentences. Moreover, if $\oy = (y_1, \ldots, y_n)$ then, for each $1 \leq i \leq n$, $-\psi_i$ is $\epsilon$-PIA if $\tau_i = 1$ and $+\psi_i$ is $\epsilon$-PIA if $\tau_i = \partial$. Finally $\epsilon^{\partial}(- \phi(\ox, \oga / \oz))$.
\end{enumerate}
\end{lemma}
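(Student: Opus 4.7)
The plan is to prove (1) and (2) by simultaneous structural induction on $\psi'$ and $\phi'$; the simultaneity is forced by Definition \ref{IF:Definition}, in which $\ifDia{\tau}$ may invoke $\ifBox{\tau^\partial}$ via the clause $\pi - \phi^c$, and dually. I describe the argument for (1); the proof of (2) is order-dual.

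In the base case, the non-triviality assumption forces $\psi'$ to be an $\epsilon$-critical propositional variable $p_i$ (with $\epsilon_i = 1$). Taking $\oy = (y)$, $\oX = \oz = \varnothing$, $\tau = (1)$, $\psi(y) = y$, and $\phi_1 = p_i$ handles this case: $\psi \in \ifDia{(1)}$ via the base clause $x_i$; the one-node tree $+\phi_1 = +p_i$ is trivially $\epsilon$-PIA; and $\epsilon^\partial(+y)$ holds vacuously, since $y \in \mathsf{PHVAR}$.

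For the inductive step, the assumption that every critical branch of $+\psi'$ has $P_3$-path of length $0$ means that the root of $+\psi'$ must lie either in $P_2$ (inner skeleton) or in $P_1$ (PIA) of every critical branch. In the latter event, by (NB-PIA) no fixed point binders appear in $+\psi'$, so $\psi'$ is itself a binder-free $\epsilon$-PIA formula; we take $\psi(y) = y$, $\tau = (1)$, $\phi_1 = \psi'$, absorbing any non-critical constant subformulas into $\oga$. If instead the root is an inner-skeleton node, I case-split on the connective using the classification in Table \ref{Skeleton:PIA:Node:Table}. For the unary cases $+\Diamond$ and $+\mu$ I recurse on the unique child and prepend the connective via the $\ifDia{\tau}$-clauses $\Diamond\psi$ and $\mu^* Y.\psi'$, the switch from $\mu$ to $\mu^*$ being the notational relabelling effected by the starring step that precedes the application of the lemma. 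For the SLA case $+\vee$, I recurse on each child carrying critical branches, disjointly rename parameters, and concatenate the order types to use the clause $\psi_1 \vee \psi_2$. For the SLR cases $+\wedge$ and $+-$, condition (NL) guarantees that the non-branch side $\gamma$ contains no live branches and is hence a constant $\mathcal{L}_*$-sentence, to be packaged as a $\pi$; this supplies the clauses $\pi \wedge \psi$, $\psi - \pi$, or --- when the critical branch of $+-$ lies on its negatively-signed right --- $\pi - \phi^c$, obtained by invoking the IH of part (2) on $-\psi_2'$, yielding $\phi^c \in \ifBox{\tau^\partial}$.

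The PIA-shape of each extracted $\phi_i$ (with polarity according to $\tau_i$) is inherited in each inductive step, since the $\phi_i$ are the maximal PIA-subtrees growing out of the inner skeleton and terminate in critical leaves, on which (NB-PIA) certifies the absence of binders. The side condition $\epsilon^\partial(+\psi(\ox, \oga/\oz))$ is preserved because the only propositional-variable occurrences in $\psi(\ox, \oga/\oz)$ lie within the parameters $\oga$, which by (GB3) and (NL) already have $\epsilon^\partial$-polarity in $+\psi'$. The main obstacle I anticipate is the bookkeeping across the polarity-flipping cases (namely $+-$ with the critical branch on the right, and the corresponding $-\rightarrow$ in part (2)), where the recursion must switch between the mutually-recursive families $\ifDia{\tau}$ and $\ifBox{\tau^\partial}$ and the order type must dualise coherently; once the simultaneous induction hypothesis is set up correctly, this reduces to mechanical case-matching. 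Part (2) is then proved by the order-dual argument, interchanging the roles of $\ifDia{\tau}$ and $\ifBox{\tau}$ and the associated constructors throughout.
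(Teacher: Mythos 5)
Your overall strategy is the paper's: a simultaneous induction on parts (1) and (2) that peels off the root connective, with the base case being the all-PIA situation (the whole subtree packaged as a single $\phi_1$, its sentencehood coming from (GB1) and its binder-freeness from (NB-PIA)), the inner-skeleton connectives matched one-by-one to the clauses of Definition~\ref{IF:Definition}, and (NL) together with (GB3) doing exactly the job you assign it, namely forcing the off-branch parameters $\oga$ to be \emph{constant} sentences. The only cosmetic difference is that the paper organises the induction by \emph{skeleton depth} (the maximal length of the $P_2$-parts of critical branches) rather than by formula structure, and verifies just the representative polarity-flipping case $-(\psi'\rightarrow\chi)$, whereas you sketch several cases; your handling of the switch to \ifDia{\tau^{\partial}}/\ifBox{\tau^{\partial}} in the flipping cases agrees with the paper's.

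There is one concrete hole in your inductive step. In the SLA case $+\vee$ you propose to ``recurse on each child carrying critical branches'' and then form $\psi_1 \vee \psi_2$. A disjunct carrying \emph{no} critical branch need not be constant: by Definition~\ref{def:rstrctd:tm:indctv}.3(b) it contains no binders, but it may contain free occurrences of fixed point variables from $\oX$ bound higher up on a critical branch --- exactly the disjunct $\Diamond X$ in $\mu X.(\Diamond X \vee \Box(\Box\Diamond q\vee p))$ of Example~\ref{Examp:ALBA:Restr:Ind}. Such a disjunct cannot be absorbed into the $\oga$ (the $\pi$'s of Definition~\ref{IF:Definition} must be free of the variables in $\ox$ and $\oX$, and \ifDia{\tau} has no $\pi\vee\psi$ clause), and your induction hypothesis --- the lemma statement itself --- presupposes non-triviality and so does not apply to it. It must instead be exhibited as an \ifDia{\tau} formula whose active leaves are the $\oX$-variables, entering the grammar through the base clause via item~\ref{IF:FixPoint} of Definition~\ref{IF:Definition} ($\tau' = \tau\oplus 1$). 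To repair this you need to strengthen the induction hypothesis so that branches terminating in free $\oX$-variables are carried along on a par with critical branches; as written, your recipe simply drops the $\Diamond X$ disjunct and the construction of $\psi_1\vee\psi_2$ cannot be completed.
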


\begin{proof}
The proof is virtually identical to that of \cite[Lemma 9.2]{muALBA} and \cite[Remark 9.3]{muALBA}, the only difference being that we need to ensure that the $\oga$ are \emph{constant}, i.e., contain no propositional variables. We will do so by appealing to the condition (NL) at the appropriate point. The proof is by simultaneous induction on the {\em skeleton depths} of $+\psi'$ and $-\phi'$, where the skeleton depth of an $\epsilon$-recursive (or inductive) generation tree $\ast \xi$, with $\ast \in \{ +, -\}$, is the maximum length of the $P_2$ parts of $\epsilon$-critical branches in $\ast \xi$.

Consider the base case when the skeleton depth of $+\psi'$ is 0, i.e., the critical branches consist only of PIA nodes. Then, by (GB1), $\psi'$ is a sentence. Hence we let $\psi= x_1$ which is \ifDia{\tau} with $\tau = (1)$, $\phi_1 = \psi'$ (which is $\epsilon$-PIA and a sentence). Moreover, (vacuously) all $\oga$ are constant sentences.

We assume that the claim holds for all $+ \psi'$ and $-\phi'$ of skeleton depth at most $k$. The inductive step consists of checking the possible cases (corresponding to the possible main connectives of $\psi'$ and $\phi'$). We verify the case when  $- \phi'$ is of the form $- (\psi' \rightarrow \chi)$. By (GB3) and (NL), $\chi$ is a constant sentence (and $\epsilon^{\partial}(- \chi)$). Then $+ \psi'$ is non-trivially restricted $(\Omega, \epsilon)$-inductive, and hence, by the induction hypothesis, $\psi'$ is of the form $\psi(\ophi / \oy, \oX, \oga / \oz)$, where $\psi(\ox, \oz)$ is an
%ac1308
$(\ox, \oz)$-\ifDia{\tau^{\partial}}
%$(\ox, \oz)$-\ifDia{\delta^{\partial}}
formula for some order-type $\tau$ over $\ox = \oy \oplus \oX$, and the $\ophi$ are sentences and the $\oga$ are constant sentences. Moreover, $\oy = (y_1, \ldots, y_n)$ and for every $1 \leq i \leq n$, the generation tree $-\phi_i$ is non-trivially $(\Omega,\epsilon)$-PIA if $(\tau^{\partial})_i = 1$ (i.e., $\tau_i = \partial$) and $+\phi_i$ is non-trivially $(\Omega, \epsilon)$-PIA if $\tau^{\partial}_i = \partial$ (i.e., $\tau_i = 1$). Then we let $\phi = \psi(\ox, \oz) \rightarrow z$, which is
%ac1308
$(\ox, \oz \oplus z)$-\ifBox{\tau},
%$(\ox, \oz \oplus z)$-\ifBox{\delta},
where $z$ is a fresh variable. Moreover, for $1 \leq i \leq n$ we let $\psi_i = \psi_i$. Hence $\phi'$  is of the form $(\psi(\opsi / \oy, \oX, \oga / \oz) \rightarrow z)[\chi / z]$, with $\phi_1 \ldots \phi_n$ (obtained above from the induction hypothesis) playing the role of $\psi_1 \ldots \psi_n$. Finally, $\epsilon^{\partial}(\psi(\ox, \oga / \oz) \rightarrow \chi)$, since $\epsilon^{\partial}(- \chi)$, and the induction hypothesis implies that $\epsilon^{\partial}( + \psi(\ox, \oga / \oz))$.
\end{proof}

Proposition \ref{Approx:Rules:Applic:Prop} implies that the approximation rules ($\mu^{\tau}$-A-R) and ($\nu^{\tau}$-A-R) can be applied to the inequalities (\ref{Eq:Approx}), respectively.\footnote{Applying one of these approximation rules within the antecedent of a quasi-inequality may split that quasi-inequality into the conjunction of several quasi-inequalities, on each of which we proceed separately.} In addition to this, we can assume w.l.o.g.\ that every inequality sitting in an antecedents of any quasi-inequality produced by these rule applications and containing a critical branch is of the form
\begin{equation}\label{Epsilon:PIA:Displayed:Eq}
\nomj \leq \phi \quad \text{ or  } \quad \psi \leq \cnomn,
\end{equation}
where $+\phi$ and $-\psi$ are non-trivially $(\Omega, \epsilon)$-PIA (i.e., non-trivially $(\Omega, \epsilon)$-inductive with all critical branches consisting only of PIA-nodes, i.e., of $P_1$-nodes) and, by (NB-PIA) and (GB1) $\phi$ and $\psi$ are sentences containing no binders, i.e., they are formulas of $\mathcal{L}$. In other words $\phi$ and $\psi$ are $(\Omega, \epsilon)$-inductive formulas of intuitionistic modal logic. This means that we can proceed as in ALBA to eliminate all propositional variables from the quasi-inequality. However, before the Ackermann rules can be applied we need to be sure that their side conditions are met. This is guaranteed by the following considerations:
%\marginpar{\raggedright \tiny Check order type here: should be one of tau or delta.}
after the application of the approximation rules  ($\mu^{\tau}$-A-R) and ($\nu^{\tau}$-A-R), all fixed point binders in the current quasi-inequalities occur only in pure inequalities of the form  $\nomi \leq \mu^* X.\psi( {\overline{\nomj}_i}^{\epsilon}/\overline{x}, X, \oga/\oz)$ and $\nu^* X.\varphi({\overline{\cnomn}_i}^{\epsilon}/\overline{x}, X, \oga/\oz) \leq \cnomm$. The only inequalities involved in the application of an Ackermann rule are non-pure, and therefore cannot contain any fixed point binders. So by Lemma \ref{lem:syn:lmst:opn:clsd} above, their left hand sides will be syntactically closed and their right and sides syntactically open, as desired.

\paragraph{In summary:} In dealing with the outer skeleton we proceed as in ALBA. The inner skeleton is processed as in $\mu$-ALBA, but with the applicability of the restricted approximation rules ($\mu^{\tau}$-A-R) and ($\nu^{\tau}$-A-R) guaranteed by Proposition \ref{Approx:Rules:Applic:Prop} via Lemma \ref{Skeleton:Are:IFDia:Lemma}. This proposition and lemma are provable thanks to the condition (NL). The condition (NB-PIA) ensures that the PIA parts contain no fixed point binders or fixed point variables and can hence be treated as in ALBA, without the need of the adjunction rules for $\mu$ and $\nu$ used in $\mu$-ALBA. Next, the condition ($\Omega$-CONF) guarantees that we are working with inductive rather than recursive inequalities, and hence that the ordinary (non-recursive) Ackermann rule can be used to eliminate the propositional variables. Lastly, the requirement that every occurrence of a binder is on an $\epsilon$-critical branch (Definition \ref{def:rstrctd:tm:indctv}) guarantees that every binder must have one of the approximation rules ($\mu^{\tau}$-A-R) and ($\nu^{\tau}$-A-R) applied to it, i.e., that the run of the algorithm is proper.

\section{Examples}\label{sec:examples}

%\texttt{[THIS SECTION MIGHT FALL AWAY OR BE REPLACED WITH SOME COMMENTS ABOUT HOW THE EARLIER EXAMPLES ON WHICH $\mu^*$-ALBA SUCCEEDED ARE
%INCLUDED IN THE CLASS WE HAVE DESCRIBED]}\\

%Here we will include some examples of inequalities on which on our algorithm $\mu^*$-ALBA succeeds. That is, the algorithm is successful in
%removing all propositional variables from the inequality. This provides the equivalence of admissible valuations on the canonical extension
%and valuations which are not necessarily admissible.

\begin{example}\label{Examp:ALBA:Restr:Ind}
Let us consider the restricted inductive inequality $\Diamond \mu X. (\Diamond X \vee \Box (\Box \Diamond q \vee p )) \leq \nu Y. ([ \Box((q \rightarrow \bot) \wedge (p \rightarrow \bot)) \rightarrow \bot] \wedge \Box Y)$ from Example \ref{Examp:Restricted:Ind}. No preprocessing is possible, so starring the binders and applying the first approximation rule produces:
\begin{align}
\forall \nomi \forall \cnomm[\nomi \leq  \Diamond \mu^* X. (\Diamond X \vee \Box (\Box \Diamond q \vee p )) \amp \nu^* Y. ([ \Box((q \rightarrow \bot) \wedge (p \rightarrow \bot)) \rightarrow \bot] \wedge \Box Y) \leq \cnomm \Rightarrow \nomi \leq \cnomm]\label{Examp:ALBA:Restr:Ind:EQ:1}
\end{align}
We begin by applying approximation rules to surface the PIA parts. The ($\Diamond$Appr) rule transforms the first inequality in the antecedent into $\nomi \leq  \Diamond \nomj$ and  $\nomj \leq \mu^* X. (\Diamond X \vee \Box (\Box \Diamond q \vee p ))$. We have to apply ($\mu^{\tau}$-A-R) to the latter inequality. To that end, consider the subformula $(\Diamond X \vee \Box (\Box \Diamond q \vee p ))$ --- it is of the form $\psi(X/x_1, \Box (\Box \Diamond q \vee p )/x_2)$ where $\psi(x_1, x_2) = \Diamond x_1 \vee x_2$, which is completely $\bigvee$-preserving as a map from $\mathbf{C} \times \mathbf{C}$ to $\mathbf{C}$ (indeed, $\psi(x_1, x_2)$ is an $(x_1, x_2)$-\ifDia{(1,1)} formula). So we may apply ($\mu^{\tau}$-A-R) to produce $\nomj \leq \mu^* X. (\Diamond X \vee \nomk)$ and $\nomk \leq \Box (\Box \Diamond q \vee p )$.

Next we have to apply ($\nu^{\tau}$-A-R) to $\nu^* Y. ([ \Box((q \rightarrow \bot) \wedge (p \rightarrow \bot)) \rightarrow \bot] \wedge \Box Y) \leq \cnomm$. The subformula $[ \Box((q \rightarrow \bot) \wedge (p \rightarrow \bot)) \rightarrow \bot] \wedge \Box Y$ is of the form $\phi(\Box((q \rightarrow \bot) \wedge (p \rightarrow \bot))/x_1, Y/x_2)$ where $\phi(x_1, x_2) = (x_1 \rightarrow \bot) \wedge \Box x_2$. Now $\phi(x_1, x_2)$ is completely $\bigwedge$-preserving as a map from $\mathbf{C}^{\partial} \times \mathbf{C}$ to $\mathbf{C}$ --- one way to see this is to note that it is an $(x_1, x_2)$-\ifBox{(\partial,1)} formula. Applying ($\nu^{\tau}$-A-R) yields  $\nu^* Y. ([ \noml \rightarrow \bot] \wedge \Box Y) \leq \cnomm$ and $\noml \leq \Box((q \rightarrow \bot) \wedge (p \rightarrow \bot))$.

Thus the PIA parts have been surfaced and the quasi-inequality has been transformed into:

\begin{align*}
%\forall \nomi_0 \cnomm_0 \nomj \nomk \noml
\left[ \bigamp \left(%
\begin{array}{l}
\nomi \leq  \Diamond \nomj\\
\nomj \leq \mu^* X. (\Diamond X \vee \nomk) \qquad\qquad\:\:\: %\\
\nomk \leq \Box (\Box \Diamond q \vee p )\\
\nu^* Y. ([ \noml \rightarrow \bot] \wedge \Box Y) \leq \cnomm \qquad %\\
\noml \leq \Box((q \rightarrow \bot) \wedge (p \rightarrow \bot))
\end{array}
\right)\Rightarrow \nomi \leq \cnomm \right].%\label{Examp:ALBA:Restr:Ind:EQ:2}
\end{align*}

We now proceed to apply adjunction and residuation rules to get the quasi-inequality into the right shape for the application of the Ackermann rules. Applying ($\Box$RA), ($\vee$RR) and ($\wedge$RA) produces

\begin{align*}
%\forall \nomi_0 \cnomm_0 \nomj \nomk \noml
\left[ \bigamp\left(%
\begin{array}{l}
\nomi \leq  \Diamond \nomj\\
\nomj \leq \mu^* X. (\Diamond X \vee \nomk) \qquad \qquad\:\:\: %\\
\Diamondblack \nomk - \Box \Diamond q  \leq  p\\
\nu^* Y. ([ \noml \rightarrow \bot] \wedge \Box Y) \leq \cnomm \qquad%\\
\Diamondblack \noml \leq q \rightarrow \bot  \qquad%\\
\Diamondblack \noml \leq p \rightarrow \bot
\end{array}
\right)\Rightarrow \nomi \leq \cnomm \right].%\label{Examp:ALBA:Restr:Ind:EQ:3}
\end{align*}

Now applying ($\rightarrow$RA) and then ($\wedge$LR) to $\Diamondblack \noml \leq q \rightarrow \bot$ transforms it into $q \leq \Diamondblack \noml \rightarrow \bot$. This yields the following quasi-inequality, which has now been solved for $+p$ and $-q$ and is therefore ready for the application of
the right and lefthanded Ackermann rules (RA) and (LA) to eliminate $p$ and $q$, respectively:
\begin{align*}
%\forall \nomi_0 \cnomm_0 \nomj \nomk \noml
\left[\bigamp\left(%
\begin{array}{l}
\nomi \leq  \Diamond \nomj\\
\nomj \leq \mu^* X. (\Diamond X \vee \nomk) \qquad \qquad\:\:\:%\\
\Diamondblack \nomk - \Box \Diamond q  \leq  p\\
\nu^* Y. ([ \noml \rightarrow \bot] \wedge \Box Y) \leq \cnomm  \qquad%\\
q \leq \Diamondblack \noml \rightarrow \bot   \qquad%\\
\Diamondblack \noml \leq p \rightarrow \bot
\end{array}
\right)\Rightarrow \nomi \leq \cnomm \right].%\label{Examp:ALBA:Restr:Ind:EQ:4}
\end{align*}
Now applying (LA) gives
\begin{align*}
%\forall \nomi_0 \cnomm_0 \nomj \nomk \noml
\left[\bigamp\left(%
\begin{array}{l}
\nomi \leq  \Diamond \nomj\\
\nomj \leq \mu^* X. (\Diamond X \vee \nomk)\qquad \qquad\:\:\:%\\
\Diamondblack \nomk - \Box \Diamond (\Diamondblack \noml \rightarrow \bot)  \leq  p\\
\nu^* Y. ([ \noml \rightarrow \bot] \wedge \Box Y) \leq \cnomm  \qquad%\\
\Diamondblack \noml \leq p \rightarrow \bot
\end{array}
\right)\Rightarrow \nomi \leq \cnomm \right],%\label{Examp:ALBA:Restr:Ind:EQ:5}
\end{align*}
and then applying (RA) gives the pure quasi-inequality
\begin{align*}
%\forall \nomi_0 \cnomm_0 \nomj \nomk \noml
\left[\bigamp\left(%
\begin{array}{l}
\nomi \leq  \Diamond \nomj\\
\nomj \leq \mu^* X. (\Diamond X \vee \nomk)\\
\nu^* Y. ([ \noml \rightarrow \bot] \wedge \Box Y) \leq \cnomm\\
\Diamondblack \noml \leq (\Diamondblack \nomk - \Box \Diamond (\Diamondblack \noml \rightarrow \bot)) \rightarrow \bot
\end{array}
\right)\Rightarrow \nomi \leq \cnomm \right].%\label{Examp:ALBA:Restr:Ind:EQ:6}
\end{align*}

\end{example}

\begin{example}\label{Examp:ALBA:Tame:Ind}
Consider the tame inductive inequality $\Diamond(\Box \bot \vee p) \wedge \Box q \leq \mu Y. (\Diamond(p \wedge q) \wedge \Box Y)$ from
Example~\ref{Examp:Tame:Ind}. Preprocessing distributes $\Diamond$ and $\wedge$ over $\vee$ in the antecedent producing $(\Diamond\Box \bot \wedge \Box q) \vee (\Diamond p \wedge \Box q) \leq \mu Y. (\Diamond(p \wedge q) \wedge \Box Y)$, to which we can apply ($\vee$LA) to split into two inequalities: $(\Diamond\Box \bot \wedge \Box q)  \leq \mu Y. (\Diamond(p \wedge q) \wedge \Box Y)$ and $(\Diamond p \wedge \Box q) \leq \mu Y. (\Diamond(p \wedge q) \wedge \Box Y)$. As $p$ appears only positively in the first, we may eliminate it by applying the ($\bot$) rule to obtain $(\Diamond\Box \bot \wedge \Box q)  \leq \mu Y. (\Diamond(\bot \wedge q) \wedge \Box Y)$. The algorithm $\mu^*$-ALBA now proceeds separately on these two inequalities. We will describe this process only for the second one, the execution on the first being very similar but less interesting. Starring and first approximation transforms the second inequality into the quasi-inequality
%\begin{align*}
$$\nomi \leq \Diamond p \wedge \Box q \;\, \amp \;\,  \mu^* Y. (\Diamond(p \wedge q) \wedge \Box Y) \leq \cnomm \;\,\Rightarrow\;\, \nomi \leq \cnomm.$$
%\end{align*}

%\begin{align*}
%\nomi_0 \leq \Diamond p \wedge \Box q \; \amp \;  \mu Y. (\Diamond(p \wedge q) \wedge \Box Y) \leq \cnomm_0 \Rightarrow \nomi_0 \leq \cnomm_0.
%\end{align*}

Applying ($\wedge$RA) to $\nomi \leq \Diamond p \wedge \Box q$ produces $\nomi \leq \Diamond p$ and  $\nomi \leq \Box q$. To the first of these we apply ($\Diamond$Appr) to produce $\nomi \leq \Diamond \nomj$ and $\nomj \leq p$, while ($\Box$RA) turns the second into $\Diamondblack \nomi \leq q$. Thus we have the quasi-inequality
$$\nomi \leq \Diamond \nomj \;\, \amp\, \; \nomj \leq p \; \amp \; \Diamondblack \nomi \leq q \;\, \amp\, \; \mu^* Y. (\Diamond(p \wedge q) \wedge \Box Y) \leq \cnomm \;\,\Rightarrow\;\, \nomi \leq \cnomm.$$

%\begin{align*}
%\nomi_0 \leq \Diamond \nomj \; \amp \; \nomj \leq p \; \amp \; \Diamondblack \nomi_0 \leq q \; \amp \; \mu Y. (\Diamond(p \wedge q) \wedge \Box Y) \leq \cnomm_0 \Rightarrow \nomi_0 \leq \cnomm_0.
%\end{align*}

This is now ready for the Ackermann rule (RA) to be applied twice to eliminate both $p$ and $q$, producing
$$\nomi \leq \Diamond \nomj \;\, \amp\, \; \mu^* Y. (\Diamond(\nomj \wedge \Diamondblack \nomi) \wedge \Box Y) \leq \cnomm \;\,\Rightarrow\,\; \nomi \leq \cnomm.$$
%\begin{align*}
%\nomi_0 \leq \Diamond \nomj \; \amp \; \mu Y. (\Diamond(\nomj \wedge \Diamondblack \nomi_0) \wedge \Box Y) \leq \cnomm_0 \Rightarrow \nomi_0 \leq \cnomm_0.
%\end{align*}
\end{example}

\appendix
\section*{Appendix: Algebraic properties of additional operations on perfect distributive lattices}\label{app:alg-lemmas}

Let $\C$ be a perfect distributive lattice.
The map $\kappa \colon J^\infty(\C) \to M^\infty(\C)$ is defined as
by $\kappa(x) = \bigvee (C \setminus {\uparrow}x)$.
This map is an order isomorphism between $J^\infty(\C)$ and $M^\infty(\C)$ with
$\kappa^{-1}$ defined for $m \in M^\infty(\C)$ by  $\kappa^{-1}(m) = \bigwedge (C \setminus {\downarrow}m)$.
Note that if $\C$ is an atomic Boolean algebra, then $\kappa$ is simply the negation $\neg$.

The results at the end of this section are algebraic versions of results presented in Appendix A of~\cite{DistMLALBA}.
We acknowledge that some of these results have been shown independently by Zhao~\cite{Zhao-thesis}.

\begin{lemma}\label{lem:xckappa} Let $\C$ be a completely distributive complete lattice. For any
$x \in J^\infty(\C)$, $m \in M^\infty(\C)$ and any $c \in C$
\begin{enumerate}
\item $x \nleqslant \kappa(x)$;
\item $\kappa^{-1}(m) \nleqslant m$;
\item $x \nleqslant c$ if and only if $c \le \kappa(x)$;
\item $c \nleqslant m$ if and only if $\kappa^{-1}(m) \le c$.
\end{enumerate}
\end{lemma}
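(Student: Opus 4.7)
The plan is to prove (3) first, deduce (1) as an instance of it, and then obtain (4) and (2) by order-dual arguments. The crucial fact to invoke at the outset is that, since $\mathbf{C}$ is completely distributive, every completely join-irreducible element is completely join-prime, and dually every completely meet-irreducible element is completely meet-prime (as remarked in the discussion of perfect distributive lattices). Thus $x \in J^{\infty}(\mathbf{C})$ is join-prime and $m \in M^{\infty}(\mathbf{C})$ is meet-prime.

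For the forward direction of (3), I would argue directly from the definition of $\kappa$: if $x \nleqslant c$, then $c \in C \setminus {\uparrow}x$, and so $c \le \bigvee(C \setminus {\uparrow}x) = \kappa(x)$. For the converse direction, suppose $c \le \kappa(x)$ and, for contradiction, that $x \le c$; then $x \le \kappa(x) = \bigvee(C \setminus {\uparrow}x)$, and by the complete join-primeness of $x$ there would be some $y \in C \setminus {\uparrow}x$ with $x \le y$, which contradicts $y \notin {\uparrow}x$.

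Statement (1) then follows immediately from (3) by substituting $c := \kappa(x)$: the inequality $\kappa(x) \le \kappa(x)$ is trivially true, so the right-to-left direction of (3) applied to this instance yields $x \nleqslant \kappa(x)$.

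Statements (4) and (2) are obtained by running through the same reasoning with the order reversed. For (4), the forward direction is immediate from the definition $\kappa^{-1}(m) = \bigwedge(C \setminus {\downarrow}m)$, while the converse uses the complete meet-primeness of $m$ to derive a contradiction from $\kappa^{-1}(m) \le c \le m$. Then (2) follows from (4) by taking $c := \kappa^{-1}(m)$. The main ``obstacle'' is really just to remember to invoke complete join/meet-primeness at the right moment; no delicate calculation is required, and no properties of $\mathbf{C}$ beyond complete distributivity are needed.
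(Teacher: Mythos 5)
Your proof is correct and follows essentially the same route as the paper's: the forward direction of (3) comes straight from the definition of $\kappa$, the converse rests on the complete join-primeness of $x$ (which the completely distributive setting guarantees for completely join-irreducibles), and (2) and (4) are handled by order duality. The only difference is cosmetic — the paper proves (1) first and then cites it in the converse of (3), whereas you inline the same primeness argument into (3) and read off (1) as the special case $c := \kappa(x)$.
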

\begin{proof} We prove (1) and (3). Suppose that $x \le \kappa(x)=\bigvee (C \setminus {\uparrow}x)$. Since
$x$ is completely join-prime we get that there exists $p \in (C \setminus {\uparrow}x)$ such
that $x \le p$, a contradiction. For (3), if $x \nleqslant c$, then $c \in (C \setminus {\uparrow}x)$
and by the definition of $\kappa$ we see that
$c \le \bigvee (C \setminus {\uparrow}x) = \kappa(x)$.
Now suppose that $c \le \kappa(x)$. If $x \le c$ then by transitivity we would get
$x \le \kappa(x)$, contradicting (1).
\end{proof}

We will often make use of the following approach in our proofs.

\begin{lemma}\label{lem:JMforleq} Let $p,q \in A^\delta$. 

\begin{enumerate}
\item If  $(x \nleq q \Rightarrow x \nleq p)$ for all $x \in \jty(\A^\delta)$, then $p \leq q$;
\item If $(p \nleq m \Rightarrow q \nleq m)$ for all $m \in \mty(\A^\delta)$, then $p \leq q$.
\end{enumerate}
\end{lemma}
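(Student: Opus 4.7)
The plan is to prove each part by taking the contrapositive of the assumed implication and then invoking the density properties of $\jty(\A^\delta)$ and $\mty(\A^\delta)$ in the perfect lattice $\A^\delta$. There is no genuine obstacle here: the statement is essentially an unpacking of what ``join-density'' and ``meet-density'' mean.

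For (1), I would first observe that the contrapositive of the hypothesis is: for every $x \in \jty(\A^\delta)$, if $x \leq p$ then $x \leq q$. Since $\A^\delta$ is a perfect lattice, the completely join-irreducible elements are join-dense, so
\[
p = \bigvee\{\, x \in \jty(\A^\delta) \mid x \leq p \,\}.
\]
By the contrapositive just noted, each such $x$ also satisfies $x \leq q$, whence $q$ is an upper bound of $\{\,x \in \jty(\A^\delta) \mid x \leq p\,\}$. Taking the join yields $p \leq q$.

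For (2), I would apply the order-dual argument. The contrapositive of the hypothesis reads: for every $m \in \mty(\A^\delta)$, if $q \leq m$ then $p \leq m$. By meet-density of $\mty(\A^\delta)$ in the perfect lattice $\A^\delta$,
\[
q = \bigwedge\{\, m \in \mty(\A^\delta) \mid q \leq m \,\},
\]
and by the contrapositive each such $m$ satisfies $p \leq m$, so $p$ is a lower bound of this set and therefore $p \leq q$.

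The only ``step'' worth flagging is the reliance on density of $\jty(\A^\delta)$ and $\mty(\A^\delta)$, which is guaranteed because $\A^\delta$ is a perfect distributive lattice (as established for canonical extensions of bounded distributive lattices with operators earlier in the paper). Everything else is formal manipulation of implications.
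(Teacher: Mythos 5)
Your proposal is correct and follows essentially the same route as the paper's proof: take the contrapositive and invoke join-density of $\jty(\A^\delta)$ for part (1) and meet-density of $\mty(\A^\delta)$ for part (2). The paper phrases (1) as an inclusion of the sets of join-irreducibles below $p$ and $q$, but this is the same argument as your upper-bound formulation.
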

\begin{proof} 1. We use the fact $\jty(\A^\delta)$ is join-dense in $\A^\delta$. We have
$p = \bigvee \{\,x \in \jty(\A^\delta) \mid x \le p\,\}$ and
$q = \bigvee \{\,x \in \jty(\A^\delta) \mid x \le q\,\}$. If
$x \nleq q \Rightarrow x \nleq p$ then $\{\, x \in \jty(\A^\delta) \mid x \le p \,\} \subseteq \{\,x \in \jty(\A^\delta) \mid x \le q \,\}$
and hence $p \le q$.
Part 2 follows using the meet-density of $\mty(\A^\delta)$ in $\A^\delta$.
\end{proof}

%\texttt{Do we have the order dual results? i.e. do we have $u\rightarrow k \in \Clos(\A^\sigma)$ and $u - k \in \Open(\A^\sigma)$? Do we need these at a later stage?}

%\newpage
Let $f$ be any map $f \colon \A \to \mathbf{B}$ and let
$c \in A^\delta$. The extensions $f^\sigma$ and $f^\pi$ of $f$ are defined by:
$$f^\sigma(c)=\bigvee \Big\{ \bigwedge\{\,f(a) \mid k \le a \le u \,\} \mid k \le c \le u, k \in \Clos(\A^\delta),u \in \Open(\A^\delta)\,\Big\}$$
and
$$f^\pi(c) = \bigwedge \Big\{ \bigvee\{\, f(a) \mid k \le a \le u \,\} \mid k \le c \le u, k\in \Clos(\A^\delta), u \in \Open(\A^\delta)\,\Big\}.$$

The following result holds for general lattices.

\begin{lemma}\label{lem:fsigmafpi}{\upshape\cite[Lemma 4.3]{GH2001}}
Let\, $\Lat$ and\, $\M$ be lattices, and let $f \colon \Lat \to \M$ be an order-preserving map.
\begin{enumerate}
\item $f^\sigma(k) = \bigwedge \{\,f(a) \mid a \in L \text{ and } k \le a\,\}$ for all $k \in \Clos(\Lat^\delta)$.
\item $f^\pi(u) = \bigvee \{\,f(a) \mid a \in L \text{ and } a \leq u \,\}$ for all $u \in \Open(\Lat^\delta)$.
%ac0412 unneeded statements taken out
%\item $f^\sigma(c) = \bigvee \{\,f^\sigma(k) \mid k \in \Clos(\Lat^\sigma) \text{ and } k \le c\,\} $ for all $c \in L^\sigma$.
%\item $f^\pi(c) =\bigwedge \{\,f^\pi(u) \mid u \in \Open(\Lat^\sigma) \text{ and } c \le u \,\} $ for all $c \in L^\sigma$.
%\item $f^\sigma$ and $f^\pi$ agree on\, $\Clos(\Lat^\sigma)\cup \Open(\Lat^\sigma)$.
\end{enumerate}
\end{lemma}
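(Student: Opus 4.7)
My plan is to prove (1) directly and obtain (2) by an order-theoretic duality argument. Throughout, I will write $\Clos$ for $\Clos(\Lat^{\delta})$ and $\Open$ for $\Open(\Lat^{\delta})$, and recall that every element of $L$ is clopen, that $\Open$ is closed under finite meets (by distributivity, since $(\bigvee S)\wedge(\bigvee T)=\bigvee\{s\wedge t : s\in S, t\in T\}$ when $S,T\subseteq L$), and that by the compactness of the canonical extension, $\{a\in L : k\le a\le u\}$ is non-empty whenever $k\in\Clos$, $u\in\Open$ and $k\le u$.

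The first step will be to reduce the defining double-indexed sup to a single sup. Fix $k\in \Clos$. For any pair $(k',u)$ with $k'\in\Clos$, $u\in\Open$, and $k'\le k\le u$, monotonicity gives $\{a\in L : k'\le a\le u\}\supseteq \{a\in L : k\le a\le u\}$, so
\[
\bigwedge\{f(a) : k'\le a\le u\}\;\le\;\bigwedge\{f(a) : k\le a\le u\}.
\]
Since $k'=k$ is itself allowed, the sup over $k'$ is attained at $k'=k$, and hence
\[
f^{\sigma}(k) \;=\; \bigvee\bigl\{\bigwedge\{f(a) : a\in L,\ k\le a\le u\} : u\in\Open,\ k\le u\bigr\}.
\]

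For the ``$\ge$'' direction, I will take $u=\top$, which lies in $L\subseteq\Open$ and satisfies $k\le\top$; the corresponding term of the sup is exactly $\bigwedge\{f(a) : a\in L,\ k\le a\}$, proving $f^{\sigma}(k)\ge \bigwedge\{f(a) : a\in L,\ k\le a\}$. For the ``$\le$'' direction, the key observation is: for every $u\in\Open$ with $k\le u$ and every $a_{0}\in L$ with $k\le a_{0}$, and for any $a\in L$ with $k\le a\le u$, the element $a\wedge a_{0}$ lies in $L$ and satisfies $k\le a\wedge a_{0}\le u$; by monotonicity of $f$, $f(a\wedge a_{0})\le f(a_{0})$. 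Hence
\[
\bigwedge\{f(a) : a\in L,\ k\le a\le u\}\;\le\; f(a\wedge a_{0})\;\le\;f(a_{0}),
\]
and taking the meet over $a_{0}$ gives $\bigwedge\{f(a) : k\le a\le u\}\le \bigwedge\{f(a_{0}) : a_{0}\in L,\ k\le a_{0}\}$. Taking the sup over $u$ yields the desired upper bound.

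Part (2) will be derived by applying part (1) to the order-reversed lattices: the map $f\colon \Lat\to\M$ is order-preserving iff $f^{\partial}\colon \Lat^{\partial}\to\M^{\partial}$ is, and under this passage $\sigma$- and $\pi$-extensions swap, open elements become closed, and the meet-expression of (1) becomes the join-expression of (2). The only step requiring real care will be the second direction of (1) — specifically the realization that $a\wedge a_{0}\in L$ provides an element already present in the meet set whose $f$-value is dominated by $f(a_{0})$; this compactness-free trick is the linchpin of the argument.
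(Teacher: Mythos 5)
Your proof is correct. The paper does not actually prove this lemma --- it is imported verbatim as \cite[Lemma 4.3]{GH2001} --- but your argument is essentially the standard one from that source: fix the closed element $k$ itself as the lower index so the double supremum collapses to a supremum over opens $u \geq k$, then use that $\{a \in L : k \le a\}$ is a filter (closed under finite meets), so that for any $a_0 \geq k$ in $L$ and any witness $a \in [k,u] \cap L$ (nonempty by compactness) the element $a \wedge a_0$ already lies in the index set and has $f$-value below $f(a_0)$; part (2) then follows by order duality. The only points worth flagging are that boundedness of $\Lat$ is needed for the choice $u = \top$, and nonemptiness of $[k,u]\cap L$ is genuinely used (otherwise the inner meet is $\top$ and the upper bound fails) --- you correctly note both.
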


%ac0412 taken out, not needed
%The corresponding results for those above can be obtained for $g \colon \Lat \to \M$ an order-reversing map by
%using the fact that $(\Lat^\sigma)^\partial=(\Lat^\partial)^\sigma$. We state them here for convenience.

%\begin{lemma}\label{lem:gsigmagpi}
%Let\, $\Lat$ and\, $\M$ be lattices, and let $g \colon \Lat \to \M$ be an order-reversing map.
%\begin{enumerate}
%\item $g^\sigma(u) = \bigwedge \{\,g(a) \mid a \in L \text{ and } a \le u\,\}$ for all $u \in \Open(\Lat^\sigma)$.
%\item $g^\pi(k) = \bigvee \{\,g(a) \mid a \in L \text{ and } k \le a \,\}$ for all $k \in \Clos(\Lat^\sigma)$.
%\item $g^\sigma(c) = \bigvee \{\,g^\sigma(u) \mid u \in \Open(\Lat^\sigma) \text{ and } c \le u\,\} $ for all $c \in L^\sigma$.
%\item $g^\pi(c) =\bigwedge \{\,g^\pi(k) \mid k \in \Clos(\Lat^\sigma) \text{ and } k \le c \,\} $ for all $c \in L^\sigma$.
%\item $g^\sigma$ and $g^\pi$ agree on\, $\Clos(\Lat^\sigma)\cup \Open(\Lat^\sigma)$.
%\end{enumerate}
%\end{lemma}

%Lemma 5.3 from \cite{GH2001} will almost certainly be useful at a later stage.

From this point onwards we will be working with a distributive lattice $\A$ and its canonical extension $\A^\delta$.

\begin{lemma}\label{lem:forEsakia}
Let $f \colon \A \to \A$ be an order-preserving map and
%$g \colon \A \to \A$ an order-reversing map. Let
Let
$k \in \Clos(\A^\delta)$ and $u \in \Open(\A^\delta)$.
%Then for any $x \in \jty(\A^\sigma)$ and $m \in\mty(\A^\sigma)$
Then for any $c \in A^\delta$,
\begin{enumerate}
\item if $c \nleq f^\sigma(k)$, there exists $a \in A$ such that $k \leq a$ and $x \nleq f^\sigma(a)=f(a)$;
\item if $f^\pi(u) \nleq c$, there exists $a \in A$ such that $a \leq u$ and $f(a)=f^\pi(a) \nleq c$;
%\item if $g^\pi(k) \nleq c$, there exists $a \in A$ such that $k \leq a$ and $g(a)=g^\pi(a) \nleq c$;
%\item if $c \nleq g^\sigma(u)$, there exists $a \in A$ such that $a \leq u$ and $c \nleq g^\sigma(a)=g(a)$.
\end{enumerate}
\end{lemma}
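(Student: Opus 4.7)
The statement is a direct consequence of Lemma~\ref{lem:fsigmafpi}, and the proof should be essentially a one-step unfolding. My plan is to observe that for any subset $S$ of $\A^\delta$, an element $c$ satisfies $c \leq \bigwedge S$ precisely when $c$ is a lower bound for $S$, so $c \nleq \bigwedge S$ is equivalent to the existence of some $s \in S$ with $c \nleq s$. The dual observation handles joins.

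For (1), I would start by applying Lemma~\ref{lem:fsigmafpi}(1) to rewrite
\[
f^\sigma(k) = \bigwedge \{\, f(a) \mid a \in A, \ k \leq a \,\}.
\]
From $c \nleq f^\sigma(k)$ I conclude that $c$ fails to be a lower bound of the set $\{\, f(a) \mid a \in A,\ k \leq a \,\}$, hence there is some $a \in A$ with $k \leq a$ and $c \nleq f(a)$. To finish, I need the identity $f^\sigma(a) = f(a)$ for $a \in A$: since $a$ is clopen in $\A^\delta$ (being simultaneously closed and open), Lemma~\ref{lem:fsigmafpi}(1) gives $f^\sigma(a) = \bigwedge\{\, f(b) \mid b \in A,\ a \leq b \,\}$, and monotonicity of $f$ makes $f(a)$ the minimum of this set.

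Part (2) is proved in exactly the order-dual manner: apply Lemma~\ref{lem:fsigmafpi}(2) to write $f^\pi(u)$ as a join indexed by elements $a \in A$ with $a \leq u$, note that $f^\pi(u) \nleq c$ forces $c$ to fail to be an upper bound of this set, and extract a witness $a$. The identity $f^\pi(a) = f(a)$ for $a \in A$ again follows from $a$ being clopen together with monotonicity.

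There is no real obstacle here: the only conceptual point is the elementary lattice-theoretic fact that $c \nleq \bigwedge S$ (respectively $\bigvee S \nleq c$) reduces to the existence of a single witness in $S$, which requires no primality assumption on $c$. Thus the proof will be short, consisting essentially of a citation of Lemma~\ref{lem:fsigmafpi} together with these order-theoretic observations and the remark that $f$ and $f^\sigma$ (respectively $f^\pi$) agree on $A$.
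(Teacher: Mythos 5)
Your proposal is correct and matches the paper's proof, which consists of exactly one sentence citing parts (1) and (2) of Lemma~\ref{lem:fsigmafpi}; your write-up simply makes explicit the elementary steps (the failure of $c$ to be a lower/upper bound yields a witness, and $f^\sigma$, $f^\pi$ agree with $f$ on $A$ by monotonicity) that the paper leaves implicit.
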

\begin{proof} The proofs of (1) and (2) follow from parts (1) and (2) respectively of Lemma~\ref{lem:fsigmafpi}.
%while the proofs for (3) and (4) follow, respectively, from parts (2) and (1) of Lemma~\ref{lem:gsigmagpi}.
\end{proof}

%ac0607 removed because
%\begin{lemma}\label{lem:usualpreswhiteops} Let $M \subseteq A^\delta$. Then
%\begin{enumerate}
%\item $\Box (\bigwedge M) = \bigwedge \{\, \Box m \mid m \in M\,\}$;
%\item $\Diamond (\bigvee M) = \bigvee \{\, \Diamond m \mid m \in M\,\}$.
%%ac0711 removed as left and right diamonds no longer in signature
%%\item $\rhd (\bigvee M) = \bigwedge \{\, \rhd m \mid m \in M\,\}$;
%%\item $\lhd (\bigwedge M) = \bigvee \{\, \lhd m \mid m \in M\,\}$.
%\end{enumerate}
%\end{lemma}

The Lemma below is an algebraic version of Corollary A.4 from~\cite{DistMLALBA}.
\begin{lemma} \label{lem:ops-usual-preservation} Consider $\A^\delta$ and let $k \in \Clos(\A^\delta)$ and
$u \in \Open(\A^\delta)$. Then
\begin{enumerate}
\item $\Box k \in \Clos(\A^\delta)$;
\item $\Diamond u \in \Open(\A^\delta)$;
%%\item $\Bb k \in \Clos(\A^\sigma)$;
%%\item $\Db u \in \Open(\A^\sigma)$;
%ac0711
%\item $\rhd u \in \Clos(\A^\sigma)$;
%\item $\lhd k \in \Open(\A^\sigma)$;
%%\item $\btr u \in \Clos(\A^\sigma)$;
%%\item $\btl k \in \Open(\A^\sigma)$;
\end{enumerate}
\end{lemma}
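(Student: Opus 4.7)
The plan is to reduce both parts directly to the fact, already in place from the definitions given earlier in the excerpt, that $\A^\delta$ is a perfect modal bi-Heyting algebra, and in such an algebra $\Box$ preserves arbitrary meets while $\Diamond$ preserves arbitrary joins.

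For (1), I will start from $k \in \Clos(\A^\delta)$ and, by the definition of closed elements, fix a set $S \subseteq A$ with $k = \bigwedge S$. Applying arbitrary meet-preservation of $\Box$ on $\A^\delta$ then gives $\Box k = \Box \bigwedge S = \bigwedge \{\, \Box s \mid s \in S \,\}$. Since $\Box$ restricts to an operation on $\A$, each $\Box s$ lies in $A$, so $\Box k$ is exhibited as a meet of a subset of $A$, which is precisely the defining condition for membership in $\Clos(\A^\delta)$. Part (2) is completely dual: picking $T \subseteq A$ with $u = \bigvee T$, the arbitrary join-preservation of $\Diamond$ gives $\Diamond u = \bigvee \{\, \Diamond t \mid t \in T \,\}$, a join of elements of $A$, hence a member of $\Open(\A^\delta)$.

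There is essentially no obstacle once the perfect modal bi-Heyting structure of $\A^\delta$ is taken as given; the proof is a one-line application of the relevant preservation property in each case. Should one prefer to avoid invoking that structure as a black box, an alternative route is via Lemma \ref{lem:fsigmafpi}: for closed $k$, the canonical extension of $\Box$ reduces to $\bigwedge \{\, \Box a \mid a \in A,\, k \le a \,\}$, which is a meet of elements of $A$ and hence closed, and dually for the canonical extension of $\Diamond$ applied to an open element.
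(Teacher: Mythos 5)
Your proof is correct and follows essentially the same route as the paper: both reduce the claim to the complete meet-preservation (resp.\ join-preservation) of $\Box$ (resp.\ $\Diamond$) on $\A^\delta$ together with the fact that these operations restrict to $\A$, the only cosmetic difference being that the paper writes $k$ as the canonical meet $\bigwedge\{a \in A \mid k \le a\}$ rather than an arbitrary $\bigwedge S$ with $S \subseteq A$.
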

\begin{proof}
1. Since $k$ is closed we have $\Box k= \Box \big(\bigwedge \{\, a \in A \mid k \le a\,\}\big)$.
As $\Box$ is completely meet-preserving we see that
$\Box k = \bigwedge \{\, \Box a \mid a \in A, k \le a \,\}$ which is a closed element of $\A^\delta$ since
$A$ is closed under $\Box$. Item (2) follows from the fact that $\Diamond$ is completely join-preserving.
\end{proof}

The following lemma, a modified version of~\cite[Lemma A.5]{DistMLALBA}, is an algebraic version of the Esakia lemma.

\begin{lemma} \label{lem:whiteEsakia} For every up-directed set $U$ of open elements, and every down-directed set
$D$ of closed elements:
\begin{enumerate}
\item $\Box (\bigvee U) = \bigvee \{\, \Box u \mid u \in U \,\}$;
\item $\Diamond (\bigwedge D) = \bigwedge \{\, \Diamond d \mid d \in D \,\}$.
%ac0711
%\item $\rhd (\bigwedge D) = \bigvee \{\,\rhd d \mid d \in D \,\}$;
%\item $\lhd (\bigvee U)= \bigwedge \{\,\lhd u \mid u \in U\,\}$.
\end{enumerate}
\end{lemma}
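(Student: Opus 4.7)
Parts (1) and (2) are order duals, so I will sketch the proof of (1) in detail and indicate how (2) follows by the symmetric argument using Lemma~\ref{lem:fsigmafpi}(1), Lemma~\ref{lem:forEsakia}(1), and the down-directedness of $D$. For (1), the direction $\bigvee\{\Box u : u \in U\} \le \Box(\bigvee U)$ is immediate from the monotonicity of $\Box$: each $u \in U$ satisfies $u \le \bigvee U$, hence $\Box u \le \Box(\bigvee U)$.

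For the reverse inequality, first observe that $\bigvee U$ is itself an open element of $\A^\delta$, since the class of opens is closed under arbitrary joins (every open is of the form $\bigvee S$ for some $S \subseteq A$). Assume for contradiction that $\Box(\bigvee U) \nleq \bigvee\{\Box u : u \in U\}$. Since the canonical extension of the meet-preserving operator $\Box$ is smooth (so it agrees with its $\pi$-extension on open elements, where Lemma~\ref{lem:fsigmafpi}(2) applies), we may invoke Lemma~\ref{lem:forEsakia}(2) with $f := \Box$, $u := \bigvee U$ and $c := \bigvee\{\Box u : u \in U\}$ to obtain some $a \in A$ satisfying $a \le \bigvee U$ and $\Box a \nleq \bigvee\{\Box u : u \in U\}$.

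Now rewrite each $u \in U$ as $u = \bigvee\{b \in A : b \le u\}$, so that $\bigvee U = \bigvee S$ with $S := \{b \in A : b \le u \textrm{ for some } u \in U\} \subseteq A$. From $a \in A$ and $a \le \bigvee S$, the compactness of $\A^\delta$ provides finitely many $b_1,\ldots,b_n \in S$ with $a \le b_1 \vee \cdots \vee b_n$. For each $i$, pick $u_{k_i} \in U$ with $b_i \le u_{k_i}$; by the up-directedness of $U$, these finitely many $u_{k_i}$ have a common upper bound $u_0 \in U$, whence $a \le u_0$. Monotonicity of $\Box$ then gives $\Box a \le \Box u_0 \le \bigvee\{\Box u : u \in U\}$, contradicting the choice of $a$. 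For (2), the dual strategy applies: $\bigwedge D$ is closed; Lemma~\ref{lem:forEsakia}(1) yields $a \in A$ with $\bigwedge D \le a$ and $\bigwedge\{\Diamond d : d \in D\} \nleq \Diamond a$; expressing each $d = \bigwedge T_d$ with $T_d \subseteq A$ and invoking compactness produces finitely many $d_1,\ldots,d_n \in D$ with $d_1 \wedge \cdots \wedge d_n \le a$; down-directedness then delivers a single $d_0 \in D$ with $d_0 \le a$, and monotonicity gives $\bigwedge\{\Diamond d : d \in D\} \le \Diamond d_0 \le \Diamond a$, the required contradiction.

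The main subtlety is the compactness step, specifically the form asserting that an $A$-element lying below a join (resp.\ above a meet) of $A$-elements must already lie below a finite sub-join (resp.\ above a finite sub-meet). This is an equivalent reformulation of the stated compactness, derivable by passing to the filter and ideal generated in $A$ by the relevant elements; directedness is then exactly what is needed to convert the finitary witness supplied by compactness into a single member of $U$ or $D$.
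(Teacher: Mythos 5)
Your proof is correct and follows essentially the same route as the paper's: monotonicity for the easy inequality, then Lemma~\ref{lem:forEsakia} combined with compactness and directedness for the converse. The only (harmless) difference is that the paper tests against an arbitrary $m \in \mty(\A^\delta)$ (resp.\ $x \in \jty(\A^\delta)$) and concludes via Lemma~\ref{lem:JMforleq}, whereas you apply Lemma~\ref{lem:forEsakia} directly with $c = \bigvee\{\Box u \mid u \in U\}$; your explicit reduction of $a \le \bigvee U$ to a compactness statement about subsets of $A$ is in fact slightly more careful than the paper's.
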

\begin{proof}
1. The fact that $\Box (\bigvee U) \geq \bigvee \{\, \Box u \mid u \in U \,\}$ follows from the fact that
$\Box$ is order-preserving. Now let $m \in \mty(\A^\delta)$ such that $\Box (\bigvee U) \nleq m$.
By Lemma~\ref{lem:forEsakia}(2), there exists $a \in A$ such that $a \le \bigvee U$ and $\Box a \nleq m$. By compactness,
there exists a finite set $F \subseteq U$ such that $a \le \bigvee F$. Since $U$ is up-directed, there exists $v \in U$ such that
$\bigvee F \le v$ and hence $a \le v$. By the fact that $\Box$ is order-preserving, we get $\Box a \leq \Box v$ and hence
$\Box v \nleq m$. Thus $\bigvee \{\,\Box u \mid u \in U\,\} \nleq m$ and by Lemma~\ref{lem:JMforleq}(2)
we have that $\Box(\bigvee U) \leq \bigvee\{\,\Box u \mid u \in U\,\}$.

2. The inequality $\Diamond (\bigwedge D) \le \bigwedge\{\,\Diamond d \mid d \in D\,\}$ follows from the
fact that $\Diamond$ is order-preserving. Now suppose that $x \in \jty(\A^\delta)$ such that
$x \nleq \Diamond (\bigwedge D)$. Since $\bigwedge D$ is closed, we can use Lemma~\ref{lem:forEsakia}(1) to get
$a \in A$ such that $\bigwedge D \le a$ and $x \nleq \Diamond(a)$. Again using the fact that
$\bigwedge D$ is closed, we use the compactness of $\A^\delta$ to get a finite subset $F \subseteq D$ such
that $\bigwedge F \le a$. Since $D$ is down-directed, the set $F$ has a lower bound $e \in D$. Now
$x \nleq \Diamond(e)$ and hence $x \nleq \bigwedge \{\,\Diamond d \mid d \in D\,\}$. Using Lemma~\ref{lem:JMforleq}(1)
now gives us the required inequality.
%ac0711
%3. The fact that $\bigvee\{\,\rhd d \mid d \in D\,\} \le \rhd(\bigwedge D)$ follows from $\rhd$ being order reversing.
%Now suppose that $\rhd(\bigwedge D) \nleq m$. By Lemma~\ref{lem:forEsakia}(3) there exists $a \in A$ such that
%$\bigwedge D \leq a$ and $\rhd a \nleq m$. Using compactness we obtain a finite set $F \subseteq D$ such that
%$\bigwedge F \le a$. Since $D$ is down-directed, there exists $e \in D$ such that $e \leq \bigwedge F \leq a$.
%Since $\rhd$ is order reversing, we get
%that $\rhd a \leq \rhd e$ and hence $\rhd e \nleq m$. Thus $\bigvee \{\,\rhd d \mid d \in D\,\} \nleq m$ and by
%Lemma~\ref{lem:JMforleq}(2) we see that $\rhd (\bigwedge D) \leq \bigvee \{\,\rhd d \mid d \in D \,\}$.
%
%4. Since $\lhd$ is order reversing we see that $\lhd(\bigvee U) \le \bigwedge\{\,\lhd u \mid u \in U \,\}$.
%Suppose that $x \nleq \lhd(\bigvee U)$. By Lemma~\ref{lem:forEsakia}(4) there exists $a \in A$ such that $a \leq \bigvee U$ and
%$x \nleq \lhd a$. By compactness there exists a finite set $F \subseteq U$ with $a \leq \bigvee F$ and since $U$ is up-directed there
%exists $v \in U$ such that $a \leq \bigvee F \leq v$. Since $\lhd$ is order reversing we see that $\lhd v \leq \lhd a$ and thus
%$x \nleq \lhd v$. This implies that $x \nleq \bigwedge\{\, \lhd u \mid u \in U\,\}$ and so by Lemma~\ref{lem:JMforleq}(1)
%$\bigwedge\{\,\lhd u \mid u \in U\,\}\leq \lhd(\bigvee U)$.
\end{proof}

\begin{cor}\label{cor:whiteopspreservation}
Let $u \in \Open(\A^\delta)$ and $k \in \Clos(\A^\delta)$. Then
\begin{enumerate}
\item $\Box u \in \Open(\A^\delta)$;
\item $\Diamond k \in \Clos(\A^\delta)$.
%ac0711
%\item $\rhd k \in \Open(\A^\sigma)$;
%\item $\lhd u \in \Clos(\A^\sigma)$.
\end{enumerate}
\end{cor}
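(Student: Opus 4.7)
The plan is to derive this corollary as a direct application of the algebraic Esakia lemma (Lemma~\ref{lem:whiteEsakia}). The key idea is that although the definitions of $\Open(\A^\delta)$ and $\Clos(\A^\delta)$ only demand an \emph{arbitrary} set of elements from $A$ whose join (resp.\ meet) yields the given open (resp.\ closed) element, we can always replace that set with an \emph{up-directed} (resp.\ \emph{down-directed}) set with the same join (resp.\ meet), since $A$ is closed under finite joins and meets.

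For (1), suppose $u = \bigvee S$ with $S \subseteq A$. Define $U := \{\bigvee F \mid F \subseteq S \text{ finite}\}$. Since $A$ is a bounded distributive lattice, each $\bigvee F$ is an element of $A$, and $U$ is clearly up-directed (the join of two finite subsets of $S$ is again a finite subset of $S$). Moreover $\bigvee U = \bigvee S = u$. Since $A \subseteq \Open(\A^\delta)$, I can apply Lemma~\ref{lem:whiteEsakia}(1) to obtain
\[
\Box u = \Box\Bigl(\bigvee U\Bigr) = \bigvee\{\,\Box v \mid v \in U\,\}.
\]
Because $A$ is closed under $\Box$, each $\Box v$ with $v \in U \subseteq A$ lies in $A$, so $\Box u$ is exhibited as a join of elements of $A$, i.e.\ $\Box u \in \Open(\A^\delta)$.

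For (2), the argument is order-dual: write $k = \bigwedge T$ with $T \subseteq A$ and form $D := \{\bigwedge F \mid F \subseteq T \text{ finite}\}$, which is a down-directed subset of $A \subseteq \Clos(\A^\delta)$ with $\bigwedge D = k$. Applying Lemma~\ref{lem:whiteEsakia}(2) gives $\Diamond k = \bigwedge\{\,\Diamond d \mid d \in D\,\}$, and since $\Diamond$ restricted to $A$ lands in $A$, this is a meet of elements of $A$, hence $\Diamond k \in \Clos(\A^\delta)$.

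There is no real obstacle here; the only point worth flagging is the legitimacy of replacing the arbitrary defining set with a directed one of the same join/meet, which is immediate from the fact that $A$ has finite joins and meets (and that these agree with those computed in $\A^\delta$ by the embedding). Once that is in place, Esakia does the rest, and closure of $A$ under $\Box$ and $\Diamond$ (part of the modal bi-Heyting algebra structure) completes the argument.
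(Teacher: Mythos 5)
Your proof is correct and follows essentially the same route as the paper: reduce to a directed family of elements of $A$ and invoke the algebraic Esakia lemma (Lemma~\ref{lem:whiteEsakia}), then use closure of $A$ under $\Box$ and $\Diamond$. The only cosmetic difference is that the paper takes the up-directed (resp.\ down-directed) family $\{a \in A \mid a \le u\}$ (resp.\ $\{a \in A \mid k \le a\}$) directly, whereas you close the given representing set under finite joins (resp.\ meets); both choices work for the same reason.
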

\begin{proof} We show (1). The set $\{\,a \in A \mid a \leq u\,\}$ is up-directed
and hence %by
Lemma~\ref{lem:whiteEsakia}(1)
gives us the equality
%we have
$\Box u = \Box \Big( \bigvee \{\, a \in A \mid a \leq u\,\} \Big) = \bigvee \{\,\Box a \mid a \in A, a \leq u\,\}$.
%$$\Box u = \Box \Big( \bigvee \{\, a \in A \mid a \leq u\,\} \Big) = \bigvee \{\,\Box a \mid a \in A, a \leq u\,\}.$$
Thus $\Box u$ is open. Part (2) follows similarly using the corresponding statement from
Lemma~\ref{lem:whiteEsakia}.
\end{proof}

The result below is an Esakia-type lemma for the adjoint operations and the implications. Parts (1) and (2) are adaptations of Lemma A.7 from~\cite{DistMLALBA}.

\begin{lemma}\label{lem:extraopspreservation}
Let $U \subseteq \Open(\A^\delta)$ be an up-directed set and let $D \subseteq \Clos(\A^\delta)$ be a down-directed set.
Then
\begin{enumerate}
\item $\Bb (\bigvee U) = \bigvee\{\,\Bb u \mid u \in U\,\}$;
\item $\Db (\bigwedge D) = \bigwedge \{\,\Db d \mid d\in D\,\}$;
%ac added
\item $\bigwedge D - \bigvee U = \bigwedge \{\, d - u \mid d \in D, u \in U \,\}$;
\item $\bigwedge D \rightarrow \bigvee U = \bigvee \{\, d \rightarrow u \mid d \in D, u \in U\,\}$.
%ac0711
%\item $\btr (\bigwedge D) = \bigvee \{\,\btr d \mid d \in D\,\}$;
%\item $\btl (\bigvee U) = \bigwedge \{\,\btl u \mid u \in U\,\}$.
\end{enumerate}
\end{lemma}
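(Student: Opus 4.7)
The four parts share a common template: in each case the trivial inequality follows from monotonicity (or antitonicity) of the operation, while the non-trivial inequality is established by combining the density of $\jty(\A^\delta)$ or $\mty(\A^\delta)$ (via Lemma~\ref{lem:JMforleq}) with a compactness argument on a carefully chosen filter/ideal pair in $\A$. The single combinatorial observation I would record once and reuse is this: if $\{F_d \mid d \in D\}$ is a family of filters of $\A$ indexed by a down-directed poset $D$ with $d_1 \leq d_2$ implying $F_{d_1} \supseteq F_{d_2}$, then $F := \bigcup_d F_d$ is a filter of $\A$, and every finite $F' \subseteq F$ is already contained in some single $F_{d_0}$; dually for ideals indexed by an up-directed poset.

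For (1) and (2), which are order-dual, the non-trivial direction of (1) reduces via Lemma~\ref{lem:JMforleq}(1) to showing that every $x \in \jty(\A^\delta)$ with $x \leq \Bb(\bigvee U)$ satisfies $x \leq \Bb u$ for some $u \in U$. The adjunction $\Diamond \dashv \Bb$ converts this to finding $u \in U$ with $\Diamond x \leq u$. As $x \in \Clos(\A^\delta)$, so is $\Diamond x$ (since $\Diamond$, as extended to $\A^\delta$, preserves closed elements), and we may write $\Diamond x = \bigwedge F$ for the filter $F = \{a \in A \mid \Diamond x \leq a\}$. Writing $\bigvee U = \bigvee I$ for the ideal $I = \{a \in A \mid a \leq u \text{ for some } u \in U\}$ (an ideal precisely because $U$ is up-directed), compactness yields finite $F' \subseteq F$ and $I' \subseteq I$ with $\bigwedge F' \leq \bigvee I'$, after which the combinatorial observation localizes $I'$ to a single $u \in U$ with $\bigvee I' \leq u$. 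Part (2) is the order-dual, using Lemma~\ref{lem:JMforleq}(2), the adjunction $\Db \dashv \Box$, and the fact that $\Box m \in \Open(\A^\delta)$ for $m \in \mty(\A^\delta) \subseteq \Open(\A^\delta)$ (by Corollary~\ref{cor:whiteopspreservation}(1)).

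Part (3) follows the same pattern via Lemma~\ref{lem:JMforleq}(2): assuming $\bigwedge D - \bigvee U \leq m$ for $m \in \mty(\A^\delta)$, the residuation $a - b \leq c \iff a \leq b \vee c$ gives $\bigwedge D \leq \bigvee U \vee m$. The left side is closed; the right side is open (since $\Open(\A^\delta)$ is closed under joins and $m \in \Open(\A^\delta)$). Representing both sides by filter/ideal pairs and running compactness plus the combinatorial observation, I obtain $d \in D$ and $u \in U$ with $d \leq u \vee m$; the residuation then yields $d - u \leq m$. Part (4) is dual: by Lemma~\ref{lem:JMforleq}(1) take $x \in \jty(\A^\delta)$ with $x \leq \bigwedge D \rightarrow \bigvee U$, and apply the residuation $a \wedge b \leq c \iff a \leq b \rightarrow c$ to rewrite this as $x \wedge \bigwedge D \leq \bigvee U$, with the left side closed. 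The same compactness-plus-directedness extraction supplies $d \in D$ and $u \in U$ with $x \wedge d \leq u$, whence $x \leq d \rightarrow u$.

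The main delicate point throughout is the last step in each argument --- the bookkeeping that turns a finite subset of a union of filters or ideals back into a single witness from $D$ or $U$. This is exactly what the combinatorial observation in the first paragraph handles, and it is where the down-directedness of $D$ and up-directedness of $U$ are essential: without them one would only obtain finite meets of elements of $D$ or finite joins of elements of $U$, which need not themselves lie in $D$ or $U$.
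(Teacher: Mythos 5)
Your proof is correct and follows essentially the same route as the paper's: the trivial inclusion by (anti)monotonicity, and the nontrivial one by passing to irreducibles, applying the adjunction/residuation, and then invoking compactness together with the directedness of $D$ and $U$ to extract a single witness. The only differences are cosmetic — you work directly with $\jty(\A^\delta)$ and $\mty(\A^\delta)$ via Lemma~\ref{lem:JMforleq} where the paper sometimes detours through $\kappa$, $\kappa^{-1}$ or through arbitrary open/closed elements, and you make the filter/ideal bookkeeping explicit where the paper leaves it implicit.
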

\begin{proof} 1. The fact that $\bigvee \{\,\Bb u \mid u \in U\,\} \leq \Bb (\bigvee U)$ follows
from $\Bb$ being order-preserving. Now suppose that $\Bb(\bigvee U) \nleq m$. Thus $\kappa^{-1}(m) \leq \Bb(\bigvee U)$ and
by the adjunction, $\Diamond (\kappa^{-1}(m)) \leq \bigvee U$. By Corollary~\ref{cor:whiteopspreservation}(2) and the fact that
$\kappa^{-1}(m)$ is closed, we can apply compactness to get a finite subset $F \subseteq U$ such that
$\Diamond (\kappa^{-1}(m)) \leq \bigvee F$. Now since $U$ is up-directed, there exists $v \in U$ such that
$\Diamond (\kappa^{-1}(m)) \leq v$ and hence $\kappa^{-1}(m) \leq \Bb v$ and, furthermore, $\Bb v \nleq m$.
Thus $\bigvee \{\,\Bb u \mid u \in U\,\} \nleq m$ and by Lemma~\ref{lem:JMforleq}(2) we have that
$\Bb (\bigvee U) \leq \bigvee \{\,\Bb u \mid u \in U\,\}$.

2. Since $\Db$ is order-preserving, we have that $\Db(\bigwedge D) \le \bigwedge \{\,\Db d \mid d \in D\,\}$. Now suppose
that $x \nleq \Db(\bigwedge D)$ for $x \in \jty(\A^\delta)$. By Lemma~\ref{lem:xckappa}(3) we have that
$\Db(\bigwedge D) \le \kappa(x)$ and by the adjunction we have $\bigwedge D \le \Box (\kappa(x))$. By Corollary~\ref{cor:whiteopspreservation}(1)
and the fact that $\kappa(x)$ is open, we can apply compactness to get a finite subset $F \subseteq D$ such that
$\bigwedge F \le \Box (\kappa(x))$. Furthermore, since $D$ is down-directed, there exists $e \in D$ with
$e \le \bigwedge F$. Hence $e \le \Box(\kappa(x))$ and $\Db e \le \kappa(x)$ and so
$x \nleq \Db(e)$. Thus $x$ is not a lower bound for $\{\,\Db d \mid d \in D\,\}$ and so by Lemma~\ref{lem:JMforleq}(1) we
have the required inequality.
%ac0711
%3. As $\btr$ is order reversing, we have $\bigvee\{\,\btr d \mid d \in D\,\} \leq\, \btr\!(\bigwedge D)$.
%Now suppose that $\btr(\bigwedge D) \nleq m$ for $m \in\mty(\A^\sigma)$. Then $\kappa^{-1}(m) \le \btr(\bigwedge D)$ and hence
%$\bigwedge D \le \rhd (\kappa^{-1}(m))$. Noting that $\kappa^{-1}(m)$ is closed, we then apply Corollary~\ref{cor:whiteopspreservation}(3) to see
%that $\rhd (\kappa^{-1}(m))$ is open. Now we can apply compactness to get a finite subset $F \subseteq D$ such
%that $\bigwedge F \le \rhd(\kappa^{-1}(m))$. Since $D$ is down-directed we know there exists $e \in D$ such that
%$e \le \bigwedge F \le \rhd(\kappa^{-1}(m))$. Now $\kappa^{-1}(m) \le\, \btr\!e$ and so $\btr\! e \nleq m$. Thus $\bigvee \{\,\btr d \mid d \in D\,\} \nleq m$
%and so $\btr (\bigwedge D) \le \bigvee \{\, \btr d \mid d \in D\,\}$.
%
%4. Since $\btl$ is order reversing, we have that $\btl (\bigvee U) \le\bigwedge\{\,\btl u \mid u \in U \,\}$. Now suppose that
%$x \nleq \btl(\bigvee U)$ for $x \in \jty(\A^\sigma)$. Then ${\btl}(\bigvee U) \le \kappa(x)$ and so $\lhd \kappa(x) \le \bigvee U$.
%Since $\lhd(\kappa(x))$ is closed (Corollary~\ref{cor:whiteopspreservation}(4)) and $\bigvee U$ is open, there exists a finite subset $F \subseteq U$ such that
%$\lhd(\kappa(x)) \le \bigvee F \leq \bigvee U$. Furthermore, as $U$ is directed, there exists $v \in U$ which is an upper bound for $F$.
%Thus $\lhd(\kappa(x)) \le \bigvee F \le v \le \bigvee U$ and so $\btl v \le \kappa(x)$ and so $x \nleq \btl v$. Thus
%$x \nleq \bigwedge\{\, \btl u \mid u \in U\,\}$.

%ac added
3. For any $d \in D$ we have $\bigwedge D \le d$ and hence $\bigwedge D - \bigvee U \le d - \bigvee U$. For any $u \in U$ we have
$u \le \bigvee U$ and hence for arbitrary $d$ we have $d - \bigvee U \le d - u$. Hence $\bigwedge D - \bigvee U \le \bigwedge \{\, d - u \mid d \in D, u \in U \,\}$. For the opposite inequality, note that by density and the definition of $-$ we have
$$\bigwedge D - \bigvee U = \bigwedge \big\{\, w \in \Open(\A^\delta) \mid \TM D - \TJ U \le w\,\Big\} =\bigwedge \Big\{\,w \in \Open(\A^\delta) \mid \TM D \le \TJ U \vee w \,\big\}.$$
Now we see that $\bigvee U \vee w = \bigvee\{u \vee w \mid u \in U \,\}$. Since $w \in \Open(\A^\delta)$, the set $\{\,u \vee w \mid u \in U \,\}$ is a set of open elements. Thus by compactness, for every $w \in \Open(\A^\delta)$ such that $\bigwedge D \le \bigvee U \vee w$, there exists some
$d_w,u_w$ such that $d_w \le u_w \vee w$, or, equivalently, $d_w -u_w \le w$. This gives us
$\bigwedge \{\, w \in \Open(\A^\delta) \mid \exists\, d \in D, u \in U \text{ such that } d-u \le w \,\} \ge \bigwedge \{\, d-u \mid d \in D, u \in U\,\}$, and hence
$\bigwedge D - \bigvee U \ge \bigwedge \{\, d-u \mid d \in D, u \in U\,\}$.

4. For all $d \in D$ and $u \in U$ we have $\bigwedge D \le d$ and $u \le \bigvee U$ and hence $d \rightarrow u \le d \rightarrow \bigvee U \le \bigwedge D \rightarrow \bigvee U$. Thus $\bigvee \{\, d \rightarrow u \mid d \in D, u \in U \,\} \le \bigwedge D \rightarrow \bigvee U$. For the reverse inequality, first observe that by density
$$\bigwedge D \rightarrow \bigvee U = \bigvee \big\{\, k \in \Clos(\A^\delta) \mid k \le \TM D \rightarrow \TJ U \,\big\}= \bigvee\big\{\, k \in \Clos(\A^\delta) \mid k \wedge \TM D \le \TJ U\,\big\}.$$
Using compactness we
%ac1308
get
%see that
$\{\,k \in \Clos(\A^\delta) \mid k \wedge \bigwedge D \le \bigvee U \,\} = \{\,k \in \Clos(\A^\delta) \mid \exists\, d\in D, u \in U \text{ s.t. } k \wedge d \le u \,\}$. Thus
%ac1308
$\bigwedge D \rightarrow \bigvee U = \bigvee \{\, k \in \Clos(\A^\delta) \mid \exists d \in D, u \in U \text{ s.t. } k \le d \rightarrow u \,\}
\le  \bigvee \{\, d \rightarrow u \mid d \in D, u \in U \,\}$.
%$$\bigwedge D \rightarrow \bigvee U = \bigvee \{\, k \in \Clos(\A^\delta) \mid \exists d \in D, u \in U \text{ such that } k \le d \rightarrow u \,\}
%\le  \bigvee \{\, d \rightarrow u \mid d \in D, u \in U \,\}.$$
\end{proof}

%ac1408
The next lemma
%This
is an adaptation of Proposition A.8 from~\cite{DistMLALBA}.

\begin{lemma}\label{lem:openclosedextraops} Let $k \in \Clos(\A^\delta)$ and $u \in \Open(\A^\delta)$. Then
\begin{enumerate}
\item $\Bb u \in \Open(\A^\delta)$;
\item $\Db k \in \Clos(\A^\delta)$;
%\item $\btr k \in \Open(\A^\sigma)$;
%\item $\btl u \in \Clos(\A^\sigma)$.
\item $k \rightarrow u  \in \Open(\A^\delta)$;
\item $k - u \in \Clos(\A^\delta)$.
\end{enumerate}
\end{lemma}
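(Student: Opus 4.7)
The plan is to prove each item by exhibiting the result as a join of elements of $A$ (for the open cases, (1) and (3)) or a meet of elements of $A$ (for the closed cases, (2) and (4)), using that $A$ is closed under $\wedge, \vee, \rightarrow, -, \Box, \Diamond$ but \emph{not} under $\Bb$ and $\Db$.

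Items (3) and (4) are essentially immediate from parts (4) and (3) of Lemma~\ref{lem:extraopspreservation}. Writing $k = \bigwedge D$ and $u = \bigvee U$ for the canonical down- and up-directed families $D = \{a \in A : k \leq a\}$ and $U = \{a \in A : a \leq u\}$, Lemma~\ref{lem:extraopspreservation}(4) yields $k \rightarrow u = \bigvee\{d \rightarrow u' : d \in D,\ u' \in U\}$, whose members all lie in $A$ since $\A$ is bi-Heyting; hence $k \rightarrow u \in \Open(\A^\delta)$. Item (4) is dual, invoking Lemma~\ref{lem:extraopspreservation}(3) and closure of $A$ under $-$.

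Items (1) and (2) are the genuinely non-routine cases, because $\Bb$ and $\Db$ live only on $\A^\delta$. For (1) the claim I would establish is $\Bb u = \bigvee V$ where $V = \{a \in A : \Diamond a \leq u\}$; since $V \subseteq A$, this would exhibit $\Bb u$ as a join of elements of $A$ and so prove openness. The inequality $\bigvee V \leq \Bb u$ is immediate from the adjunction $\Diamond \dashv \Bb$. For the reverse, by density it suffices to take an arbitrary closed $k \leq \Bb u$ and show $k \leq \bigvee V$. From $\Diamond k \leq u$, Lemma~\ref{lem:whiteEsakia}(2) applied to the down-directed family $\{a \in A : k \leq a\}$ of closed elements gives $\Diamond k = \bigwedge\{\Diamond a : a \in A,\ k \leq a\}$; combining this with $u = \bigvee\{b \in A : b \leq u\}$ and applying compactness yields finitely many $a_1, \ldots, a_n$ with $k \leq a_i$ and $b_1, \ldots, b_m$ with $b_j \leq u$ (all in $A$) such that $\bigwedge_i \Diamond a_i \leq \bigvee_j b_j$. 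Setting $a_0 = \bigwedge_i a_i \in A$, monotonicity of $\Diamond$ yields $\Diamond a_0 \leq \bigwedge_i \Diamond a_i \leq u$, so $a_0 \in V$, while $k \leq a_0$ is immediate.

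Item (2) is the order-dual: show $\Db k = \bigwedge\{a \in A : k \leq \Box a\}$ using $\Db \dashv \Box$, Lemma~\ref{lem:whiteEsakia}(1) applied to the up-directed family $\{a \in A : a \leq u\}$ of open elements, and compactness. The main obstacle is entirely in (1) and (2): the adjoints $\Bb$ and $\Db$ need not preserve $A$, so one cannot argue directly as in (3) and (4). The resolution is the Esakia-plus-compactness manoeuvre above, which converts a single application of $\Bb$ or $\Db$ to a closed/open argument into a statement about finitely many applications of $\Diamond$ or $\Box$ to arguments inside $A$.
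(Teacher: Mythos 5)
Your proof is correct. For items (1) and (2) you follow essentially the same route as the paper: adjunction, the Esakia lemma (Lemma~\ref{lem:whiteEsakia}) applied to $\{a \in A \mid k \le a\}$, and compactness; the only difference is that the paper reduces to completely join-irreducible elements below $\Bb u$ (resp.\ completely meet-irreducible elements above $\Db k$) where you reduce to arbitrary closed (resp.\ open) elements, which by density is an equivalent move. For items (3) and (4) you take a genuinely shorter route: the paper argues directly, showing for instance that $k \rightarrow u = \bigvee\{a \in A \mid a \wedge k \le u\}$ by decomposing an arbitrary closed $c$ with $c \wedge k \le u$ as a meet of elements of $A$ and applying compactness, whereas you specialise Lemma~\ref{lem:extraopspreservation}(3),(4) to the canonical directed families $D = \{a \in A \mid k \le a\}$ and $U = \{a \in A \mid a \le u\}$. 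Your shortcut buys brevity but silently uses one fact the paper's direct argument avoids: that $d \rightarrow u'$ and $d - u'$, computed in $\A^\delta$ as residuals, land back in $A$ when $d, u' \in A$, i.e.\ that the canonical embedding is a bi-Heyting embedding. This is true and standard (and the paper relies on it elsewhere), but it should be stated explicitly, since for (3) it is essentially the clopen special case of the claim being proved. One cosmetic slip: in your sketch of (2) the Esakia lemma should be applied to the up-directed family $\{a \in A \mid a \le w\}$ for the open element $w$ lying above $\Db k$, not to a family indexed by the $u$ of the statement.
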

\begin{proof}
1. It is clear that $\bigvee\{a \in A \mid a \le \Bb u\} \le \Bb u$. Suppose $x \in \jty(\A^\delta)$ and
that $x \le \Bb u$. By the adjunction this gives $\Diamond x \le u$. Since $x \in \Clos(\A^\delta)$ we
have that $\Diamond x = \Diamond \big( \bigwedge\{a \in A \mid x \le a\}\big) \le u$. Since
$\{a \in A \mid x \le a\}$ is down-directed, by Lemma~\ref{lem:whiteEsakia}(2) we
have that $\Diamond x = \bigwedge \{ \Diamond a \mid x \le a\} \le u$. Since $\Diamond x$ is closed, we can use
compactness to get a finite set $\{\Diamond a_i\}_i=1^n$ such that $\bigwedge\{\Diamond a_i \mid 1 \le i \le n\} \le u$.
Since $\Diamond$ is order-preserving, we have
$ \Diamond \Big( \bigwedge \{a_i \mid 1 \le i \le n\}\Big) \le \bigwedge\{\Diamond a_i \mid 1 \le i \le n\} \le u$.
Now $b=\bigwedge\{a_i \mid 1 \le i \le n\} \in A$ and $x \le b$. Thus we have $b \in A$ such that
$\Diamond b \le u$ (and hence $b \le \Bb u$) with $x \le b$. This gives us that
$\Bb u \le \bigvee\{a \in A \mid a \le \Bb u\}$.

2. Clearly $\Db k \le \bigwedge\{a \in A \mid \Db k \le a\}$. Suppose $m \in \mty(\A^\delta)$ and that
$\Db k \le m$. By the adjunction, we have $k \le \Box m$, which, by Lemma~\ref{lem:whiteEsakia}(1) gives us
$k \le \bigvee\{\Box a \mid a \in A \text{ and } a \le m\} = \Box m$.
By compactness we can get a finite subset $\{\Box a_i\}_{i=1}^n$ such that
$k \le \bigvee \{\Box a_i \mid 1 \le i \le n\}$ and since $\Box$ is order-preserving, we have that
$k \le \Box \bigvee \{a_i \mid 1 \le i \le n\}$. Clearly $b = \bigvee a_i \in A$ and $b \le m$. Thus
there exists $b \in A$ such that $k \le \Box b$, and hence $\Db k \le b$ with $b \le m$. This implies that
$\bigwedge\{a \in A \mid \Db k \le a\} \le \Db k$.
%ac0711
%3. Clearly $\bigvee\{\,a \in A \mid a \le \btr k\,\} \le \btr k$. Suppose that $\btr k \nleq m$ for $m \in \mty(\A^\sigma)$. Then
%$k \leq \rhd \kappa^{-1}(m)$. Since $\kappa^{-1}(m)$ is closed, and by Lemma~\ref{lem:whiteEsakia}(3) we have
%$$k \leq \rhd \big( \bigwedge\{\,b \in A \mid \kappa^{-1}(m) \le b\,\}\big) = \bigvee \{\,\rhd b \mid \kappa^{-1}(m) \le b \,\}.$$
%By compactness there exists a finite set $\{b_i\}_{i=1}^n$ such that $k \leq \bigvee \{\,\rhd b_i\mid 1 \le i \le n\,\}$. Let
%$c = \bigwedge_{i=1}^n b_i$. Since $\rhd$ is order reversing, we have that $k \leq \rhd c$ and hence $c \leq \btr k$. Lastly,
%$\kappa^{-1}(m) \le c$ and hence $c \nleq m$. The result follows by Lemma~\ref{lem:xkx}(2).
%
%4. Clearly $\btl u \leq \bigwedge\{\,a \in A \mid \btl u \leq a\,\}$. Suppose that $x \nleq \btl u$ where $x \in \jty(\A^\sigma)$. By
%Lemma~\ref{lem:xkx}(1) and the adjunction, we have $\lhd \kappa(x) \le u$. Since $\kappa(x)$ is open, we use Lemma~\ref{lem:whiteEsakia}(4)
%to see that
%$$
%\lhd \kappa(x) = \lhd \big( \bigvee \{\, b \in A \mid b \leq \kappa(x) \,\} = \bigwedge \{\, \lhd b \mid b \in A, b \leq \kappa(x)\,\} \le u.
%$$
%By compactness there exists a finite set $\{b_i\}_{i=1}^n$ such that $\bigwedge_{i=1}^n \lhd b_i \le u$. Let $c = \bigvee_{i=1}^n b_i$. By the fact that
%$\lhd$ is order reversing, $\lhd c \le \bigwedge_{i=1}^n \lhd b_i \le u$ and hence $\btl u \leq a$. Also, $c \le \kappa(x)$ and so $x \nleq c$ and thus the result
%follows by Lemma~\ref{lem:JMforleq}(1).

%ac joined into one lemma
3. By the join-density of $\Clos(\A^\delta)$
%the closed elements
%in $\A^\delta$
we get
%that
$k \!\rightarrow\! u=\!\bigvee\{\, c \in \Clos(\A^\delta) \mid c \le k \!\rightarrow\! u \,\} = \!\bigvee\{\, c\in \Clos(\A^\delta) \mid c \wedge k \le u \,\}$.%$$
Since every $a \in A$ is an element of $\Clos(\A^\delta)$ we have
$\bigvee \{\,a \in A \mid a \le k \rightarrow u \,\} = \bigvee\{\, a \in A \mid a \wedge k \le u \,\} \le k \rightarrow u$.
Now suppose that $c \in \Clos(\A^\delta)$ and that $c \wedge k \le u$.
Now $c = \bigwedge \{\, b_i \mid b_i \in A, c \le b_i\,\}$
and $c \wedge k = \bigwedge \{\, b_i \wedge k \mid c \le b_i, b_i \in A \,\} \in \Clos(\A^\delta)$. By compactness there exists a finite
subset of the $b_i$ such that $\bigwedge^n_{j=1} (b_j \wedge k) = (\bigwedge^n_{j=1} b_j) \wedge k \le u$.
Now $\bigwedge^n_{j=1} b_i = a_c \in A$ and so for every $c \in \Clos(\A^\delta)$ such that
$c \wedge k \le u$, there exists $a_c \in A$ such that $c \le a$ and $a \wedge k \le u$. Thus
$\bigvee\{\, a \in A \mid a \wedge k \le u \,\} = \bigvee \{\, a \in A \mid a \le k \rightarrow u \,\} = k \rightarrow u$
and so $k\rightarrow u \in \Open(\A^\delta)$.

4. Observe that $ k - u = \bigwedge \{\,w \in \Open(\A^\delta) \mid k -u \le w \,\} = \bigwedge\{\,w \in \Open(\A^\delta) \mid k \le w \vee u \,\}$.
Since each $a \in A$ is an element of $\Open(\A^\delta)$ we have that
$\bigwedge\{\,a \in A \mid k-u\le a \,\}=\bigwedge\{\,a \in A \mid k \le u \vee a\,\} \ge k-u$.
Let $w \in \Open(\A^\delta)$ such that $k \le u \vee w$. Now $w = \bigvee \{\, b_i \in A \mid b_i \le w\,\}$ and
$u \vee w$ is open. By compactness there exists a finite subset of the $b_i$ such that
$k \le \bigvee^m_{j=1}( u \vee b_j ) = (\bigvee^m_{j=1} b_j) \vee u$. Now $\bigvee^m_{j=1} b_j = a_w \in A$
and $k \le a_w \vee u$ and $a_w \le w$. Therefore
$\bigwedge\{\, a \in A \mid k \le u \vee a \,\} \le \bigwedge\{\, w \in \Open(\A^\delta) \mid k \le u \vee w\,\}=k-u$
and so $k - u \in \Clos(\A^\delta)$.
\end{proof}

\bibliographystyle{siam}
\bibliography{mucan}

\end{document}